\theoremstyle{plain}
    \newtheorem{theorem}{Theorem}[section]
    \newtheorem{lemma}[theorem]{Lemma}
    \newtheorem{proposition}[theorem]{Proposition}
    \newtheorem{corollary}[theorem]{Corollary}
\theoremstyle{definition}
    \newtheorem{remark}{Remark}[section]
\theoremstyle{remark}
\numberwithin{equation}{section}
\renewcommand{\l}{\left}
\renewcommand{\r}{\right}
\newcommand{\cleq}{\lesssim}
\newcommand{\cgeq}{\gtrsim}
\newcommand{\eps}{\varepsilon}
\def\norm#1{\left\Vert #1 \right\Vert} %norm
\def\tbra#1#2{\left\langle #1 , #2 \right\rangle} %inner product% trianguler bracket
\newcommand{\N}{\mathbb{N}}
\newcommand{\R}{\mathbb{R}}
\newcommand{\cD}{\mathcal{D}}
\newcommand{\cE}{\mathcal{E}}
\newcommand{\cF}{\mathcal{F}}
\newcommand{\cG}{\mathcal{G}}
\newcommand{\cI}{\mathcal{I}}
\newcommand{\cM}{\mathcal{M}}
\DeclareMathOperator{\re}{Re}
\begin{document}
%%%%%%%%%%%%%%%%%%%%%%%%%%%%%%%%%%%%%%%%%%%%%%%%%%%%%%%%

\title[Asymptotic behavior of the solutions to NLS on star graph]{Asymptotic behavior for the long-range nonlinear  Schr\"odinger equation on star graph with the Kirchhoff boundary condition}
\author[K. Aoki]{Kazuki Aoki}
\address{Department of Mathematics, Graduate School of Science, Osaka University, Toyonaka, Osaka 560-0043, Japan}
\email{k.aoki5296@gmail.com}
\author[T. Inui]{Takahisa Inui}
\address{Department of Mathematics, Graduate School of Science, Osaka University, Toyonaka, Osaka 560-0043, Japan}
\email{inui@math.sci.osaka-u.ac.jp}
\author[H. Miyazakii]{Hayato Miyazaki}
\address{Advanced Science Course, Department of Integrated Science and Technology, National Institute of Technology, Tsuyama College, Tsuyama, Okayama, 708-8509, Japan}
\email{miyazaki@tsuyama.kosen-ac.jp}
\author[H. Mizutani]{Haruya Mizutani}
\address{Department of Mathematics, Graduate School of Science, Osaka University, Toyonaka, Osaka 560-0043, Japan}
\email{haruya@math.sci.osaka-u.ac.jp}
\author[K. ]{Kota Uriya}
\address{Department of Applied Mathematics, Faculty of Science, Okayama University of Science, Okayama, 700-0005, Japan}
\email{uriya@xmath.ous.ac.jp}
\date{\today}
\keywords{Schr\"{o}dinger equation, star graph, long-range nonlinearity, modified scattering, failure of scattering}
\subjclass[2010]{35Q55, 35B40, etc.}

\maketitle

\begin{abstract}
We consider the cubic nonlinear Schr\"{o}dinger equation on the star graph with the Kirchhoff boundary condition. 
We prove modified scattering for the final state problem and the initial value problem. Moreover, we also consider the failure of scattering for the Schr\"{o}dinger equation with power-type long-range nonlinearities. These results are extension of the results for NLS on the one dimensional Euclidean space. 
\end{abstract}

\tableofcontents

%%%%%%%%%%%%%%%%%%%%%%%%%%%%%%%%%%%%%%%%%%%%%%%%%%%%%%%%

\section{Introduction}

\subsection{Background}
We mainly consider the following cubic nonlinear Schr\"{o}dinger equation on the star graph $\mathcal{G}$ with $n$-edges:
\begin{align}
\label{eq:NLS}
	i\partial_t u +\Delta_{K} u +\lambda |u|^2 u = 0, 
	\quad (t,x) \in I \times  \mathcal{G},
\end{align}
where $I$ is a time interval, $\lambda=\pm1$, and $\Delta_{K}$ is the Laplacian with the Kirchhoff boundary condition on the star graph $\mathcal{G}$.
Recently, researches of dispersive equations on metric graphs
have attracted much attention. 
Roughly, the star graph is a metric graph such as in Figure \ref{fig1} below.
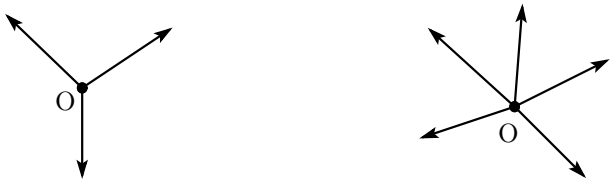
\begin{figure}[htbp]
 \begin{center}
	%WinTpicVersion4.32a
{\unitlength 0.1in%
\begin{picture}(9.1300,8.3100)(4.1100,-12.5700)%
% VECTOR 1 0 3 0 Black White  
% 6 800 800 1257 495 800 800 800 1257 800 800 411 426
% 
\special{pn 13}%
\special{pa 800 800}%
\special{pa 1257 495}%
\special{fp}%
\special{sh 1}%
\special{pa 1257 495}%
\special{pa 1190 515}%
\special{pa 1213 525}%
\special{pa 1213 549}%
\special{pa 1257 495}%
\special{fp}%
\special{pa 800 800}%
\special{pa 800 1257}%
\special{fp}%
\special{sh 1}%
\special{pa 800 1257}%
\special{pa 820 1190}%
\special{pa 800 1204}%
\special{pa 780 1190}%
\special{pa 800 1257}%
\special{fp}%
\special{pa 800 800}%
\special{pa 411 426}%
\special{fp}%
\special{sh 1}%
\special{pa 411 426}%
\special{pa 445 487}%
\special{pa 449 463}%
\special{pa 473 458}%
\special{pa 411 426}%
\special{fp}%
% ELLIPSE 1 0 3 0 Black White  
% 4 800 800 822 776 982 816 902 816
% 
\special{pn 13}%
\special{ar 800 800 22 24 0.1460123 0.0822317}%
% LINE 0 0 3 0 Black White  
% 4 815 805 806 814 816 798 806 808
% 
\special{pn 20}%
\special{pa 815 805}%
\special{pa 806 814}%
\special{fp}%
\special{pa 816 798}%
\special{pa 806 808}%
\special{fp}%
% LINE 0 0 3 0 Black White  
% 8 793 803 787 809 790 800 785 805 787 797 784 800 794 808 790 812
% 
\special{pn 20}%
\special{pa 793 803}%
\special{pa 787 809}%
\special{fp}%
\special{pa 790 800}%
\special{pa 785 805}%
\special{fp}%
\special{pa 787 797}%
\special{pa 784 800}%
\special{fp}%
\special{pa 794 808}%
\special{pa 790 812}%
\special{fp}%
% LINE 0 0 3 0 Black White  
% 4 806 784 799 791 801 783 796 788
% 
\special{pn 20}%
\special{pa 806 784}%
\special{pa 799 791}%
\special{fp}%
\special{pa 801 783}%
\special{pa 796 788}%
\special{fp}%
% STR 2 0 3 0 Black White  
% 4 658 864 658 914 2 0 0 0
% O
\put(6.5800,-9.1400){\makebox(0,0)[lb]{O}}%
% CIRCLE 0 0 3 0 Black White  
% 4 800 800 810 796 850 796 890 796
% 
\special{pn 20}%
\special{ar 800 800 11 11 6.2387701 6.2033553}%
% CIRCLE 0 0 3 0 Black White  
% 4 800 800 818 800 818 800 938 800
% 
\special{pn 20}%
\special{ar 800 800 18 18 0.0000000 6.2831853}%
\end{picture}}%
%  \includegraphics[width=20mm]{3-edges.eps}
%  \label{fig:fig1}
  \hskip30mm
	%WinTpicVersion4.32a
{\unitlength 0.1in%
\begin{picture}(9.9200,8.8000)(2.8800,-11.6000)%
% ELLIPSE 1 0 3 0 Black White  
% 4 800 800 822 776 982 816 902 816
% 
\special{pn 13}%
\special{ar 800 800 22 24 0.1460123 0.0822317}%
% LINE 0 0 3 0 Black White  
% 4 815 805 806 814 816 798 806 808
% 
\special{pn 20}%
\special{pa 815 805}%
\special{pa 806 814}%
\special{fp}%
\special{pa 816 798}%
\special{pa 806 808}%
\special{fp}%
% LINE 0 0 3 0 Black White  
% 8 793 803 787 809 790 800 785 805 787 797 784 800 794 808 790 812
% 
\special{pn 20}%
\special{pa 793 803}%
\special{pa 787 809}%
\special{fp}%
\special{pa 790 800}%
\special{pa 785 805}%
\special{fp}%
\special{pa 787 797}%
\special{pa 784 800}%
\special{fp}%
\special{pa 794 808}%
\special{pa 790 812}%
\special{fp}%
% LINE 0 0 3 0 Black White  
% 4 806 784 799 791 801 783 796 788
% 
\special{pn 20}%
\special{pa 806 784}%
\special{pa 799 791}%
\special{fp}%
\special{pa 801 783}%
\special{pa 796 788}%
\special{fp}%
% STR 2 0 3 0 Black White  
% 4 714 934 714 984 2 0 0 0
% O
\put(7.1400,-9.8400){\makebox(0,0)[lb]{O}}%
% VECTOR 1 0 3 0 Black White  
% 2 800 800 320 960
% 
\special{pn 13}%
\special{pa 800 800}%
\special{pa 320 960}%
\special{fp}%
\special{sh 1}%
\special{pa 320 960}%
\special{pa 390 958}%
\special{pa 371 943}%
\special{pa 377 920}%
\special{pa 320 960}%
\special{fp}%
% VECTOR 1 0 3 0 Black White  
% 2 800 800 840 280
% 
\special{pn 13}%
\special{pa 800 800}%
\special{pa 840 280}%
\special{fp}%
\special{sh 1}%
\special{pa 840 280}%
\special{pa 815 345}%
\special{pa 836 333}%
\special{pa 855 348}%
\special{pa 840 280}%
\special{fp}%
% VECTOR 1 0 3 0 Black White  
% 4 800 800 360 400 800 800 1280 560
% 
\special{pn 13}%
\special{pa 800 800}%
\special{pa 360 400}%
\special{fp}%
\special{sh 1}%
\special{pa 360 400}%
\special{pa 396 460}%
\special{pa 399 436}%
\special{pa 423 430}%
\special{pa 360 400}%
\special{fp}%
\special{pa 800 800}%
\special{pa 1280 560}%
\special{fp}%
\special{sh 1}%
\special{pa 1280 560}%
\special{pa 1211 572}%
\special{pa 1232 584}%
\special{pa 1229 608}%
\special{pa 1280 560}%
\special{fp}%
% VECTOR 1 0 3 0 Black White  
% 2 800 800 1160 1160
% 
\special{pn 13}%
\special{pa 800 800}%
\special{pa 1160 1160}%
\special{fp}%
\special{sh 1}%
\special{pa 1160 1160}%
\special{pa 1127 1099}%
\special{pa 1122 1122}%
\special{pa 1099 1127}%
\special{pa 1160 1160}%
\special{fp}%
% CIRCLE 0 0 3 0 Black White  
% 4 800 800 814 800 894 800 894 800
% 
\special{pn 20}%
\special{ar 800 800 14 14 0.0000000 6.2831853}%
% CIRCLE 0 0 3 0 Black White  
% 4 800 800 818 800 898 800 898 800
% 
\special{pn 20}%
\special{ar 800 800 18 18 0.0000000 6.2831853}%
\end{picture}}%
%  \includegraphics[width=25mm]{6-edges.eps}
%  \label{fig:fig2}
 \end{center}
 \caption{3-edges star graph and 5-edges star graph}
 \label{fig1}
\end{figure}
We give precise definition of the star graph and setting of the problem later. 

In the present paper, we are interested in the asymptotic behavior of the solution 
to \eqref{eq:NLS} and consider both final state problem and 
 initial value problem. 
Both problems in the one dimensional Euclidean space are
extensively studied for the nonlinear Schr\"odinger equation with 
gauge invariant power-type nonlinearity: 
\begin{equation}
\label{eq:gNLS}
i\partial_tu + \Delta u + \lambda |u|^{p}u = 0, \quad (t,x) \in \R \times \R, 
\end{equation}
where $p > 0$ and $\lambda = \pm 1$.
It is well-known that $p = 2$ is the critical exponent 
in the sense of the asymptotic behavior of the solutions. 
More precisely, if $p >2$, the solution to \eqref{eq:gNLS}
scatters to a solution of the free Schr\"odinger equation \cite{TsYa}. 
In this case, the nonlinearity is called ``short-range". 
On the other hand, if $0 < p \le 2$, the solutions to 
\eqref{eq:gNLS} do not scatter to free solutions. 
See for instance \cite{Str74,Ba,Caz,MuNa} and the references given there. 
In the critical case, i.e. $p = 2$, 
it is known that the solution of \eqref{eq:gNLS} scatters to a free solution with a phase modification.
This phenomena is called modified scattering. 	
In the case of final state problem, 
Ozawa~\cite{Oz} showed the modified scattering. % when $d = 1$. 
In the case of initial value problem,
Hayashi--Naumkin~\cite{HaNa98} showed the modified scattering. %when $1 \le d \le 3$. 
Several alternative proofs of the result \cite{HaNa98} were given by Lindbald--Soffer~\cite{LiSo}, Kato--Pusateri \cite{KaJ}, and Ifrim--Tataru~\cite{IfTa}. Moreover, the modified scattering phenomena for the nonlinear Schr\"{o}dinger equation on the higher dimensional Euclidean space $\mathbb{R}^d$ ($d=2,3$) was also proved by e.g. Ginibre--Ozawa~\cite{GO} and Hayashi--Naumkin \cite{HaNa98}. (Note that the critical exponent is given by $p=2/d$ in this case.)

We consider the Schr\"{o}dinger equation on the star graph with power-type nonlinearity:
\begin{align}
\label{NLS^p}
i \partial_{t} u +\Delta_{K} u +\lambda \left| u \right|^{p}u=0, \quad  t \in \mathbb{R},\ x \in \mathcal{G},
\end{align}
where $\lambda=\pm 1$ and $p>0$. Since the star graph is the connected half-lines (the star graph with 2-edges is just a line), the critical exponent is expected to be $p=2$ as the case of the line $\mathbb{R}$. 
Yoshinaga \cite{Yos18} proved that the solutions of \eqref{NLS^p} scatter to the free solution when $p>2$, whose argument is based on \cite{TsYa}. (See also \cite[Remark 3]{AIM20} for the precise statement.) Recently, the first, second, and fourth authors \cite{AIM20} proved the failure of scattering when $0<p<1$ (In fact, they treated the nonlinear Schr\"{o}dinger equation with more general boundary conditions). 

In the present paper, we are mainly interested in the scattering phenomena in the critical case $p=2$. We will give modified scattering results for the final state problem and the initial value problem when $p=2$. 
Our proofs are based on the argument of \cite{Oz,HaNa98}. Namely, we use the factorization formula, which is also called the Dollard decomposition, of the propagator $e^{it\Delta_{K}}$. 
To derive the factorization formula, we apply the Fourier transform $\mathcal{F}$ with respect to $\Delta_{K}$ derived by Weder \cite{Wed15}, which is an extension of the usual Fourier transform on the line $\mathbb{R}$. 
We also have an interest in the asymptotic behavior of the solutions in the case of $1 \leq p \leq 2$.  
We will show that the scattering to the free solution fails when $1\leq p \leq 2$ by assuming more regularity than in the previous paper\cite{AIM20}.
The proof is based on \cite{Caz}.

\subsection{Setting and notations}
\label{sec2}
Before stating the main results, we give some notations used in the main results and their proofs.
A finite graph is a 4-tuple $(V, \mathcal{I}, \mathcal{E}, \partial)$, 
where $V$ is the finite set of the vertices , $\cI$ is the finite set of 
internal edges, $\cE$ is the finite set of external edges. 
A map $\partial$ is from $\cI \cup \cE$ to the set of vertices and ordered 
pairs of two vertices which satisfies $\partial(i) = (v^i_1,v^i_2)$ 
(possibly $v^i_1 = v^i_2$, $v^i_1, v^i_2 \in V$) for $i \in \cI$
and $\partial(e) = v$ for $e \in \cE$. 
We call $v^i_1 =: \partial^-(i)$ and $v^i_2 =: \partial^+(i)$
initial and final vertex of the internal edge $i \in \cI$, respectively. 
We endow the graph with the metric structure. We assume that for any internal edge $i \in I$ there exist $a_i >0$ and a map $i\mapsto [0,a_i]$ corresponding $\partial^{-}(i)$ to $0$ and $\partial^{+}(i)$ to $a_i$ and that for any external edge $e \in \mathcal{E}$ there exists a map $e \mapsto [0,\infty)$. 
We call $a_i$ the length of the internal edge $i \in \cI$. 
The graph endowed with such metric structure is called metric graph. 
For given $n \in \N$, a star graph with $n$-edges is a metric graph 
$(\{0\}, \emptyset, \{e_j\}_{j=1}^n, 
\partial : \{e_j\}_{j=1}^n\to \{0\})$. 
See Figure \ref{fig1}  for typical examples of star graph. 
Throughout the paper, let 
$\cG = (\{0\}, \emptyset, \{e_j\}_{j=1}^n, \partial : \{e_j\}_{j=1}^n\to \{0\})$ 
be a star graph. 

A function $f$ on $\cG$ is given by 
a vector $f = (f_1, f_2, \cdots, f_n)^T$, where each $f_j$ is a 
complex-valued function defined on $e_j = (0,\infty)$
and $f^T$ denotes the transpose of $f$. 
We emphasize that the notation $|f|$ for a function $f$ on $\mathcal{G}$ does not mean $(\sum_{j=1}^{n}|f_j|^2)^{1/2}$. In the paper, we regard $|f|$ as $(|f_j|)_{1\leq j \leq n}$. Moreover $fg=(f_jg_j)_{1\leq j \leq n}$ for functions $f,g$ on $\mathcal{G}$. That is, our calculation is component-wise. Especially, the nonlinearity $|f|^2f$ is $(|f_j|^2f_j)_{1\leq j \leq n}$.  

The Lebesgue measure on $\cG$ is naturally induced by the Lebesgue measure 
on half-lines $e_1, \cdots, e_n$. 
The function space $L^2(\cG)$ is defined as the set of measurable functions 
which are square-integrable on each external edge of $\cG$. 
Namely, 
\begin{equation}
L^2(\cG) = \bigoplus_{j=1}^n L^2(e_j)
\end{equation}
and the inner product and the norm are defined by 
\begin{align}
\tbra{f}{g}=\tbra{f}{g}_{L^2(\mathcal{G})} &:= \sum_{j=1}^n\tbra{f_j}{g_j}_{e_j}
= \sum_{j=1}^n\int_{e_j}f_j(x) \overline{g_j(x)}dx, \\
\|f\|_{L^2(\cG)}^2 &:= \tbra{f}{f}
= \sum_{j=1}^n\int_{e_j}|f_j(x)|^2dx = \sum_{j=1}^n\|f_j\|_{L^2(e_j)}^{2}, 
\end{align}
where $f = (f_j)_{j = 1, \cdots, n}^T$, $g = (g_j)_{j=1,\cdots, n}^T$
with  $f_j$, $g_j \in L^2(e_j)$ for each $j=1,\cdots, n$. 
Then $L^2(\cG)$ is a Hilbert space. 
For $1 \le p \le \infty$, $L^p(\cG)$ can be defined similarly, 
i.e. $f \in L^p(\cG)$ if each component of $f$ is $L^p$ function. 
The norm is defined by 
\begin{equation}
\|f\|_{L^p}=
\|f\|_{L^p(\cG)}
:= \left\{
\begin{aligned}
&\left(\sum_{j=1}^n\|f_j\|_{L^p(e_j)}^p\right)^{\frac{1}{p}}
\quad (1 \le p < \infty), \\
&\sup_{1 \le j \le n} \|f_j\|_{L^\infty(e_j)}
\quad (p = \infty).
\end{aligned}
\right.
\end{equation}
The norm of the weighted $L^2$ space, which is denoted by $H^{0,1}(\mathcal{G})$, is defined by 
\begin{align*}
	\| f \|_{H^{0,1}}^2 =\| f \|_{H^{0,1}(\mathcal{G})}^2 := \| f \|_{L^2 (\mathcal{G})}^2 + \| Xf \|_{L^2(\mathcal{G})}^2,
\end{align*}
where $Xf=(xf_j)_{j = 1, \cdots, n}^T$.

For $m=1,2$, the Sobolev space $H^m(\cG)$ is defined by 
\begin{equation}
H^m(\cG) := \bigoplus_{j=1}^nH^m(e_j) 
\end{equation}
and the norm is defined by 
\begin{equation}
\|f\|_{H^m} =
\|f\|_{H^m(\cG)} 
:= \left(\sum_{j=1}^n\|f_j\|_{H^{m}(e_j)}^2\right)^\frac{1}{2}.
\end{equation}
We set
\begin{align*}
	\Sigma=\Sigma(\mathcal{G}) :=H^1(\mathcal{G}) \cap H^{0,1}(\mathcal{G}).
\end{align*}
We remark that we do not assume any conditions at the vertex. When we assume the continuity at the vertex, we use the subscript $c$ such as $H_c^m(\cG)$ and $\Sigma_{c}(\mathcal{G})$.

We introduce the Laplacian on the star graph with the Kirchhoff boundary. 
Let
\begin{equation}
\label{eq2.7}
A = \begin{pmatrix}
1 & -1 & 0 & \cdots & 0 & 0\\
0 & 1 & -1 & \cdots & 0 & 0\\
\vdots & \vdots & \vdots & & \vdots & \vdots\\
0 & 0 & 0 & \cdots & 1 & -1 \\
0 & 0 & 0 & \cdots & 0 & 0 
\end{pmatrix}, 
\quad 
B= 
\begin{pmatrix}
0 & 0 & 0 & \cdots & 0 & 0\\
0 & 0 & 0 & \cdots & 0 & 0\\
\vdots & \vdots & \vdots & & \vdots & \vdots\\
0 & 0 & 0 & \cdots & 0 & 0 \\
1 & 1 & 1 & \cdots & 1 & 1 
\end{pmatrix}.
\end{equation}
Then, we define the Laplacian $\Delta_K$ as follows:
\begin{align}
\label{eq2.8}
\mathscr{D}(\Delta_K) &:= \{f \in H^2(\cG)\ :\ Af(0) + Bf'(0+)=0\}, \\
\label{eq2.9}
\Delta_K f &:= (f_1'', f_2'', \cdots, f_n''),
\end{align}
where $f_j'(x) = \frac{\partial f_j}{\partial x}$. This $\Delta_K$ is called the Laplacian on the star graph $\mathcal{G}$ with the Kirchhoff boundary. In this case, the condition $Af(0) + Bf'(0+) = 0$ implies that 
$f_j(0) = f_k(0)$ for all $j,k \in \{1,2,\cdots,n\}$
and $\sum_{j=1}^nf_j'(0+) = 0$. 
Since there is no external interaction at the vertex, 
the Laplacian $\Delta_K$ is regarded as the free Laplacian 
on the star graph. 
(See \cite{KoSc,AKW11,GrIg} for the definitions of other boundaries.)

The Schr\"{o}dinger propagator $U(t)=e^{it\Delta_{K}}$ can be defined as the unitary operator on $L^2(\mathcal{G})$ by the Stone theorem.

According to Weder \cite{Wed15}, we define the Fourier transform $\mathcal{F}$ with respect to $\Delta_{K}$ and its inverse $\mathcal{F}^{-1}$ by
\begin{align*}
	\mathcal{F}&:=
	(\mathcal{F}^{-}-\mathcal{F}^{+})I_{n}
	+ \frac{2}{n} \mathcal{F}^{+}J_{n},
	\\
	\mathcal{F}^{-1}&:=\mathcal{F}^*= (\mathcal{F}^{+}-\mathcal{F}^{-})I_{n} +\frac{2}{n} \mathcal{F}^{-}J_{n},
\end{align*}
where $I_{n}$ is the identity matrix, $J_{n}$ is the matrix whose all elements are $1$, 
\begin{align*}
	[\mathcal{F}^{\pm} f](\xi) := (2\pi)^{-1/2} \int_{0}^{\infty} e^{\pm ix\xi} f(x) dx
\end{align*}
and $\mathcal{F}^*$ denotes the adjoint of $\mathcal{F}$. 
%$ for $\xi \in [0,\infty)$. 
The Fourier transform with respect to more general boundary is derived by Weder \cite{Wed15}. 

We define a multiplier operator $\mathcal{M}$ and a dilation operator $\mathcal{D}$ on the star graph by
\begin{align*}
	\mathcal{M}=\mathcal{M}(t):=
	M(t) I_n,
	\quad 
	\mathcal{D}=\mathcal{D}(t):=
	D(t) I_n,
%	\begin{pmatrix}
%	D & 0 & 0 \\
%	0 & D & 0 \\
%	0 & 0 & D 
%	\end{pmatrix},
\end{align*}
where 
$[M (t)\varphi](x):=e^{\frac{i|x|^2}{4t}}\varphi(x)$ 
and $[D(t) \varphi](x):=(2i t)^{-1/2}\varphi(x/{2t})$ for $t>0$.
We denote those inverse operators by
%we have the inverse operators $\mathcal{F}^{-1}$, $\mathcal{M}^{-1}$, and $\mathcal{D}^{-1}$, respectively, as follows.
\begin{align*}
	\mathcal{M}^{-1}&=M^{-1}I_{n} =\mathcal{M}(-t), 
	\\
	\mathcal{D}^{-1}&=D^{-1}I_{n},
\end{align*}
where $[M^{-1}(t)\varphi](x):=e^{\frac{-i|x|^2}{4t}}\varphi(x)$, and $[D^{-1}(t) \varphi](x):=(2i t)^{1/2}\varphi(2tx)$ for $t>0$.

$A \cleq  B$ means that there exists a positive constant $C$ such that $A \leq CB$. Such constants may be different from line to line. 

%%%%%%%%%%%%%%%%%%%%%%%%%%%%%%%%%%%%%%%%%%%%%%%%%%%%%%%%%%%%%%%%%%%
\section{Main results}

In the paper, we only treat the positive direction in time for simplicity. 
The first result is the modified scattering for the final state problem for \eqref{eq:NLS}.

\begin{theorem}[Modified scattering for final state problem]
\label{thm:FS}
%Let $\gamma \in (1/2,2]$.
Let $1/4<\alpha<1/2$. 
There exists $\eps_0 > 0$ with the following properties: 
For any $\varphi \in H_{c}^{1}(\mathcal{G})$ satisfying 
$\|\varphi\|_{L^\infty(\cG)} < \eps_0$, 
there exists $T \in \mathbb{R}$ and a unique solution $u \in C([ T,\infty);L^2(\cG))\cap 
L^4((T,\infty);L^\infty(\cG))$ of \eqref{eq:NLS} satisfying 
%\begin{align}
%\label{eq:FS1}
%\norm{
%u(t) - 
%\frac{1}{(2it)^{\frac{1}{2}}} 
%\exp \left( \frac{i|x|^2}{4t} +i\lambda 
%\left| 
%\varphi\l( \frac{x}{2t}\r)  
%\right|^2 \log t\right)   
%\varphi\l( \frac{x}{2t}\r)    
%}_{L^2(\mathcal{G})} 
%\cleq t^{-\alpha}
%\end{align}
\begin{align}
\label{eq:FS1}
\norm{u_j(t) - \frac{1}{(2it)^{\frac{1}{2}}} 
\varphi_j\l( \frac{x}{2t}\r)
\exp \left( \frac{i|x|^2}{4t} +i \frac{\lambda}{2} 
\left| 
\varphi_j\l( \frac{x}{2t}\r)  
\right|^2 \log t\right) 
  }_{L^2(e_j)}
\cleq t^{-\alpha}
\end{align}
for $t\geq T$ and each $j=1,2,...,n$.%, where $1/4<\alpha<1/2$. 
\end{theorem}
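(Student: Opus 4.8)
The plan is to reduce the problem on the star graph to a problem essentially of the same form as the classical one-dimensional result of Ozawa \cite{Oz}, using the Weder Fourier transform $\mathcal{F}$ and the factorization (Dollard) decomposition of $U(t)=e^{it\Delta_K}$. First I would establish the factorization formula $U(t)=\mathcal{M}\mathcal{D}\mathcal{F}\mathcal{M}$ (and its variants) on $L^2(\mathcal G)$, which should follow from the corresponding one-line identity for $\mathcal{F}^{\pm}$ combined with the definition $\mathcal{F}=(\mathcal{F}^--\mathcal{F}^+)I_n+\tfrac2n\mathcal{F}^+J_n$; since $\mathcal M$ and $\mathcal D$ are diagonal (scalar $\times I_n$), the matrix structure passes through cleanly. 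The point of the continuity assumption $\varphi\in H^1_c(\mathcal G)$ is that the Kirchhoff vertex condition $Af(0)+Bf'(0+)=0$ is respected by the flow, so that the approximate solution built from $\varphi$ is compatible with $\mathscr D(\Delta_K)$; I would record that $\mathcal F$ maps $H^1_c(\mathcal G)$ appropriately and that the ansatz stays in the operator domain in the relevant sense.

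Next I would set up the ansatz. Define the approximate solution
\begin{align*}
u_{\mathrm{ap},j}(t,x):=\frac{1}{(2it)^{1/2}}\varphi_j\!\l(\frac{x}{2t}\r)\exp\!\l(\frac{i|x|^2}{4t}+i\frac{\lambda}{2}\l|\varphi_j\!\l(\frac{x}{2t}\r)\r|^2\log t\r),
\end{align*}
and write $u=u_{\mathrm{ap}}+v$. The heart of the argument is the standard computation showing that $u_{\mathrm{ap}}$ solves \eqref{eq:NLS} up to an error that is integrable in time near $t=\infty$: the $\log t$ phase is designed precisely so that the resonant part of $\lambda|u_{\mathrm{ap}}|^2u_{\mathrm{ap}}$ cancels $i\partial_t u_{\mathrm{ap}}+\Delta_K u_{\mathrm{ap}}$, leaving a remainder that gains a power $t^{-1-\delta}$ after one extra factorization/integration by parts (using that $\varphi\in H^1$ gives control of $x$-derivatives of $\varphi(x/2t)$, which convert into $t^{-1}$ gains). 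Because all operations are componentwise and $\Delta_K$ acts as $\partial_x^2$ on each edge, this computation is literally the one-dimensional half-line computation done $n$ times; the vertex only enters to guarantee $u_{\mathrm{ap}}(t)\in$ (a neighborhood of) $\mathscr D(\Delta_K)$ so that Duhamel with $U(t)$ is legitimate.

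Then I would run a contraction/fixed-point argument for $v$ in the space $C([T,\infty);L^2(\mathcal G))\cap L^4((T,\infty);L^\infty(\mathcal G))$, using the Strichartz estimates for $U(t)=e^{it\Delta_K}$ on the star graph (these follow from the $L^1\to L^\infty$ dispersive bound $\|U(t)\|_{L^1\to L^\infty}\cleq |t|^{-1/2}$, which one reads off from the factorization formula since $\mathcal F$ is bounded $L^1\to L^\infty$ componentwise and $\mathcal D$ carries the $t^{-1/2}$). The Duhamel integral for $v$ involves the remainder term (integrable, contributing $O(t^{-\alpha})$ by choosing the exponent so that $1/4<\alpha<1/2$ matches the integrability margin) plus the difference $\lambda(|u|^2u-|u_{\mathrm{ap}}|^2u_{\mathrm{ap}})$, which is handled by the smallness $\|\varphi\|_{L^\infty}<\eps_0$ and $\|u_{\mathrm{ap}}(t)\|_{L^\infty}\cleq t^{-1/2}\|\varphi\|_{L^\infty}$ making the cubic term subcritical after integrating $t^{-1}\cdot t^{-\alpha}$ type bounds on $(T,\infty)$ for $T$ large. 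The fixed point gives existence, uniqueness, and the decay estimate \eqref{eq:FS1} simultaneously.

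The main obstacle I anticipate is purely bookkeeping at the vertex: verifying that the factorization formula and the dispersive estimate hold for $\Delta_K$ with the correct constants, and that the ansatz $u_{\mathrm{ap}}$ (which a priori is just a componentwise Gaussian-modulated profile) interacts correctly with the Kirchhoff condition so that Duhamel's formula with $e^{it\Delta_K}$ is valid — equivalently, that the continuity hypothesis $\varphi\in H^1_c(\mathcal G)$ is exactly what is needed and suffices. Once the factorization $U(t)=\mathcal M\mathcal D\mathcal F\mathcal M$ is in hand, the nonlinear analysis is a routine adaptation of \cite{Oz,HaNa98} performed componentwise; the genuinely new input is the translation of Weder's spectral/Fourier theory \cite{Wed15} into this Dollard-type decomposition, and I would isolate that as a preliminary lemma before touching the nonlinear estimates.
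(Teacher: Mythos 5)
Your overall skeleton --- the factorization $U(t)=\mathcal{M}\mathcal{D}\mathcal{F}\mathcal{M}$, the ansatz $u_{ap}=\mathcal{M}\mathcal{D}w$ with $w_j=\varphi_j\exp(i\tfrac{\lambda}{2}|\varphi_j|^2\log t)$, and a contraction in $C([T,\infty);L^2)\cap L^4((T,\infty);L^\infty)$ via the Strichartz estimates for $e^{it\Delta_K}$ --- is exactly the paper's. But two of your specific claims would not survive being written out. First, you propose to verify directly that $i\partial_t u_{ap}+\Delta_K u_{ap}+\lambda|u_{ap}|^2u_{ap}$ is a time-integrable error. That computation produces $\varphi''(x/2t)$ and hence needs $\varphi\in H^2$, whereas only $\varphi\in H^1_c$ is assumed. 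The paper never applies $\Delta_K$ to $u_{ap}$: it works at the level of $\mathcal{F}U(-t)u(t)-w(t)$, where the free flow is handled exactly by the factorization and the only error terms are $R_f=\mathcal{M}\mathcal{D}\mathcal{F}(\mathcal{M}-1)\mathcal{F}^{-1}f$ with $f=w$ and $f=N(w)$, estimated through $\|(\mathcal{M}-1)g\|_{L^2}\lesssim t^{-1/2}\|Xg\|_{L^2}$ --- so one derivative of $\varphi$ suffices. Your phrase ``one extra factorization/integration by parts'' gestures at this reorganization but does not supply it, and the claimed ``$t^{-1}$ gain'' per derivative is actually $t^{-1/2}$.

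Second, and this is the genuinely graph-specific point you miss: the hypothesis $\varphi\in H^1_c(\mathcal{G})$ is not there to make $u_{ap}$ ``compatible with $\mathscr{D}(\Delta_K)$'' or to legitimize Duhamel's formula (the Duhamel/integral equation with the unitary group is valid for $L^2$ data, and $u_{ap}$ is never required to lie in the operator domain). It is needed because on the star graph $\partial_x\mathcal{F}\neq i\mathcal{F}X$ (Remark \ref{rem}\,3.1 notwithstanding the numbering, see the remark after Lemma \ref{lem4.4}): to convert the weight $X$ demanded by the $(\mathcal{M}-1)$ estimate into the derivative $\partial_x\varphi$ one must use the identity $X\mathcal{F}^{-1}\varphi=i\mathcal{F}_c^{-1}\partial_x\varphi$ of Lemma \ref{lem4.2}, which involves the co-Fourier transform $\mathcal{F}_c$ and holds only when the boundary contributions $\varphi_j(0)$ cancel, i.e.\ precisely when $\varphi$ is continuous at the vertex. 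Without this, the estimates of $R_w$ and $R_{N(w)}$ --- the only place the error decay in \eqref{eq:FS1} comes from --- break down. So the step you dismiss as ``literally the one-dimensional half-line computation done $n$ times'' is the one place where the graph structure and the hypothesis $\varphi\in H^1_c$ do real work, and your proposal as written has a gap there.
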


The second result is the modified scattering for the initial value problem. 
We have the small data global existence. 

\begin{theorem}[Global existence for initial value problem]
\label{iv:thm1}
There exists $\varepsilon_0>0$ such that the following assertion holds: For any $0<\varepsilon \leq \varepsilon_0$ and $u_0 \in \Sigma_c (\mathcal{G})$ with $\norm{u_0}_{\Sigma} \leq \eps$, there exists a unique global solution $u \in C([0,\infty), \Sigma_c \cap L^{\infty})$ to \eqref{eq:NLS} with $u(0)=u_0$  satisfying
\begin{align}
	\norm{u(t)}_{L^{\infty}} \le C \eps (1+|t|)^{-\frac12} \label{iv:thm2}
\end{align}
for any $t \geq 0$.
\end{theorem}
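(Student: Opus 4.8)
The plan is to adapt the Hayashi--Naumkin approach to the star-graph setting, exploiting the factorization $U(t) = \mathcal{M}\mathcal{D}\mathcal{F}\mathcal{M}$ derived from Weder's Fourier transform $\mathcal{F}$. First I would set up the standard local theory on $\Sigma_c(\mathcal{G})$: by unitarity of $U(t)$ on $L^2$ and $H^1$ (the continuity condition at the vertex is preserved since $\Delta_K$ commutes with the flow), together with the one-dimensional-type dispersive estimate $\|U(t)f\|_{L^\infty(\mathcal{G})}\lesssim |t|^{-1/2}\|f\|_{L^1(\mathcal{G})}$ on each edge, one gets Strichartz estimates componentwise. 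This yields a unique local solution in $C(I,\Sigma_c\cap L^\infty)$ by the usual contraction argument, with the operator $J(t) = U(t) X U(-t)$ (equivalently $\mathcal{M}(it)(2t)\partial_x\mathcal{M}(-t)$-type vector field) controlling the weighted norm. The key point is that all nonlinear estimates are component-wise because, by the paper's convention, $|u|^2u = (|u_j|^2u_j)_{j}$, so the star-graph structure only enters through the linear propagator.

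Next I would run the bootstrap. Introduce the profile $v(t) := \mathcal{F}\mathcal{M}(-t)\mathcal{D}(t)^{-1}U(-t)u(t)$, or more directly work with $\hat w(t,\xi)$ where $w = U(-t)u$; using the factorization one writes the equation for $\hat w$ and extracts the resonant term, which on each edge is the same cubic ODE $i\partial_t \hat w_j = \lambda t^{-1}|\hat w_j|^2\hat w_j + (\text{remainder})$ as in the Euclidean case. The remainder is estimated by $t^{-1-\delta}$ in $L^2$ using the $H^{0,1}$-type control coming from $\|Ju\|_{L^2}$ and the $L^\infty$ decay \eqref{iv:thm2}. Define the a priori quantity
\begin{align*}
M(T) := \sup_{0\le t\le T}\left( \|u(t)\|_{\Sigma} + (1+t)^{1/2}\|u(t)\|_{L^\infty(\mathcal{G})} + (1+t)^{-\gamma}\|Ju(t)\|_{L^2(\mathcal{G})}\right)
\end{align*}
for a small $\gamma>0$, and close the estimate $M(T) \lesssim \varepsilon + M(T)^3$ for $\varepsilon$ small, which gives $M(T)\lesssim\varepsilon$ uniformly in $T$ and hence global existence plus \eqref{iv:thm2}.

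The pointwise decay \eqref{iv:thm2} itself comes from the factorization: $\|u(t)\|_{L^\infty}\lesssim |t|^{-1/2}\|\mathcal{F}\mathcal{M}(-t)u(t)\|_{L^\infty} = |t|^{-1/2}\|\mathcal{F}\mathcal{M}(-t)\mathcal{D}(t)^{-1}\cdot U(t)w\|_{L^\infty}$, and one shows $\|\mathcal{F}\mathcal{M}(-t)u - \hat w\|_{L^\infty_\xi}\lesssim t^{-\delta}\|u\|_{\Sigma}$ by an $L^2$-to-$L^\infty$ Sobolev-type bound together with the oscillatory-integral estimate for $\mathcal{M}(-t)-1$ — this is where the explicit form of $\mathcal{F}$ as a linear combination of the half-line transforms $\mathcal{F}^{\pm}$ matters, but since $I_n$ and $J_n$ are bounded matrices it reduces to the half-line statement. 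Since $|\hat w_j(t,\xi)|$ is conserved by the resonant ODE, $\|\hat w(t)\|_{L^\infty_\xi}$ stays $\lesssim\varepsilon$, giving the decay.

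The main obstacle I anticipate is not any single estimate but verifying that Weder's Fourier transform genuinely produces a clean factorization formula $U(t)=\mathcal{M}\mathcal{D}\mathcal{F}\mathcal{M}$ on the star graph (with the correct $I_n$, $J_n$ bookkeeping) and that the operator $\mathcal{F}$ maps $H^1_c(\mathcal{G})$ and the weighted space appropriately — in particular that the continuity-at-the-vertex constraint and the Kirchhoff derivative condition are respected under $\mathcal{F}$, $\mathcal{M}$, and $\mathcal{D}$, so that the bootstrap stays inside $\Sigma_c$. Once that factorization lemma is in place (presumably proved in an earlier section of the paper that this excerpt precedes), the rest is a faithful transcription of the $\mathbb{R}$-case argument of Hayashi--Naumkin, done edge-by-edge.
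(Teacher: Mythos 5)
Your overall strategy is the paper's: local well-posedness in $\Sigma_c\cap L^\infty$ via the dispersive/Strichartz estimates, the commuting vector field $\mathcal{J}(t)=U(t)XU(-t)$ for the weighted norm, the factorization $U(t)=\mathcal{M}\mathcal{D}\mathcal{F}\mathcal{M}$ to pass to the profile $\mathcal{F}U(-t)u$, the resonant cubic ODE on each edge with integrable remainders, and a continuation/bootstrap argument. The obstacle you single out --- whether Weder's $\mathcal{F}$ interacts correctly with $X$ and $\partial_x$ --- is real and is exactly what Lemmas \ref{lem4.2} and \ref{lem4.4} handle: one has $X\mathcal{F}^{-1}\varphi=i\mathcal{F}_{c}^{-1}\partial_x\varphi$ only through the \emph{co}-Fourier transform $\mathcal{F}_{c}$ and only for $\varphi\in H^1_c$; in particular $\partial_x\mathcal{F}\neq i\mathcal{F}X$, and this is where continuity at the vertex is genuinely used.

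There is, however, one step in your write-up that fails as stated: closing $M(T)\lesssim\varepsilon+M(T)^3$ with $\sup_{0\le t\le T}\|u(t)\|_{\Sigma}$ included in $M(T)$ \emph{without} a time-growth allowance. The Duhamel bound for the cubic nonlinearity gives
\begin{align*}
\|u(t)\|_{H^1}\le \|u_0\|_{H^1}+C\int_0^t\|u(s)\|_{L^\infty}^2\|u(s)\|_{H^1}\,ds\le \varepsilon+CM(T)^3\log(1+t),
\end{align*}
and the logarithm is unbounded, so a uniform-in-$t$ $\Sigma$-bound cannot be recovered from $\varepsilon+M(T)^3$; the same issue arises for $\|XU(-t)u(t)\|_{L^2}$ and a fortiori for $\|Xu(t)\|_{L^2}$. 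The repair --- and what the paper does --- is to build the loss into the norm: the space $\mathscr{Z}_T^{\varepsilon,A}$ carries the weight $(1+|t|)^{-A\varepsilon}$ on $\|u\|_{H^1}+\|U(-t)u\|_{H^{0,1}}$, the constant $A=c_0$ is matched to the constant in Lemma \ref{iv:0}, and the estimate is closed by Gronwall in Proposition \ref{iv:pro1}, yielding $\|u(t)\|_{H^1}+\|XU(-t)u(t)\|_{L^2}\lesssim\varepsilon(1+t)^{c_0\varepsilon}$ rather than $\lesssim\varepsilon$. Your $(1+t)^{-\gamma}$ weight on $\|\mathcal{J}u\|_{L^2}$ with a fixed small $\gamma$ would also serve for that term once $\varepsilon$ is small relative to $\gamma$, but you must drop, or similarly weight, the unweighted $\sup_t\|u(t)\|_{\Sigma}$ term. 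This small growth is harmless for the conclusion: the decay \eqref{iv:thm2} comes from the uniform profile bound $\|\mathcal{F}U(-t)u\|_{L^\infty}\lesssim\varepsilon$ (the remainder integrals $\int\tau^{-1-\alpha+3A\varepsilon}\,d\tau$ converge precisely because $A\varepsilon$ is small), not from a uniform $\Sigma$-bound.
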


And, we obtain the following modified scattering result.
\begin{theorem}[Modified scattering for initial value problem] 
\label{iv:thm3}
Let $u(t)$ be a global solution with $u(0) = u_0$ given by Theorem \ref{iv:thm1}.  Then, if $\|u_0\|_{\Sigma} \leq \varepsilon$, there exists a unique $W \in L^\infty(\mathcal{G}) \cap L^2(\mathcal{G})$ such that
\begin{align}
	\norm{(\mathcal{F}U(-t) u)_j(t)   \exp \left( -i \frac{\lambda}{2} \int_{1}^{t} |(\mathcal{F}u)_j|^{2} \frac{d\tau}{\tau}\right) -W_j   }_{L^2(e_j) \cap L^{\infty}(e_j)} \cleq \varepsilon t^{-\frac{1}{4} + \delta} \label{iv:thm4}
\end{align}
for any $t \geq 1$, each $j =1, 2,\ldots, n$, where $\delta$ is sufficiently small depending on $\eps$.
  
Moreover, it holds that there exists a real valued function $\Psi \in L^{\infty}(\mathcal{G})$ such that
\begin{align}
	\norm{\frac{\lambda}2 \int_1^t |\cF w(\tau)|^2 \frac{d \tau}{\tau} - \frac{\lambda}2 |W|^2 \log t - \Psi}_{L^{\infty}(\cG)} \cleq \varepsilon t^{-\frac{1}{4}+\delta} \log t \label{iv:thm5}
\end{align}
for all $t \geq 1$.  Furthermore, the estimate 
\begin{align}
	\begin{aligned}
	&{}\norm{(\cF U(-t)u)_j(t) - \exp \left(  i \frac{\lambda}2 |W_j|^2 \log t + i \Psi_j \right) W_j}_{L^2(e_j) \cap L^{\infty}(e_j)} \\
	&{}\cleq \eps t^{-\frac{1}{4}+\delta } \log t 
	\end{aligned}
	\label{iv:thm6}
\end{align}
is valid for any $t \geq 1$ and each $j =1, 2,\ldots, n$. Thus, the asymptotic formula
\begin{align}
	\begin{aligned}
	u(t) ={}& \frac{1}{(2it)^{\frac12}} W_+\l( \frac{x}{2t} \r) \exp \left( \frac{i|x|^2}{4t} + i \frac{\lambda}2 \l|W_+\l( \frac{x}{2t} \r)\r|^2 \log t \right)  \\
	&{}+ O\l( \eps t^{-\frac34+\delta} \log t \r)
	\end{aligned}
	\label{iv:thm7}
\end{align}
holds for $x \in \mathcal{G}$, where $W_+ = W \exp( i \Psi )$.
\end{theorem}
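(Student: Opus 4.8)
The plan is to carry out a Hayashi--Naumkin type argument at the level of the profile
\begin{align*}
v(t):=\mathcal{F}\,U(-t)u(t)=\mathcal{F}\,w(t),\qquad w(t):=U(-t)u(t),
\end{align*}
using the factorization formula (Dollard decomposition) $U(t)=\mathcal{M}(t)\mathcal{D}(t)\,\mathcal{F}\,\mathcal{M}(t)$ for Weder's Fourier transform, together with the a priori bounds furnished by the proof of Theorem~\ref{iv:thm1}, i.e. $\|u(t)\|_{\Sigma}\cleq\eps(1+t)^{\delta}$, $\|u(t)\|_{L^\infty}\cleq\eps(1+t)^{-1/2}$ and $\sup_{t\ge1}\|v(t)\|_{L^2\cap L^\infty}\cleq\eps$ with $\delta=\delta(\eps)$ small. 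Differentiating the Duhamel formula gives $\partial_t w=i\lambda\,U(-t)(|u|^2u)$. Inserting $u=\mathcal{M}\mathcal{D}\mathcal{F}\mathcal{M}w$, using $|\mathcal{M}\psi|=|\psi|$ and $[\mathcal{D}\psi]_j(x)=(2it)^{-1/2}\psi_j(x/2t)$, and replacing $\mathcal{M}$ and $\mathcal{M}^{-1}$ by the identity, I expect to obtain componentwise the scalar ODE
\begin{align*}
\partial_t v_j(t)=\frac{i\lambda}{2t}\,|v_j(t)|^2 v_j(t)+R_j(t),\qquad \|R(t)\|_{L^2\cap L^\infty}\cleq\eps\, t^{-\frac54+\delta}\quad(j=1,\dots,n).
\end{align*}
Since Weder's $\mathcal{F}$ is not diagonal, the components get mixed, but this mixing enters only the remainder: the leading term stays componentwise because the discrepancy $\mathcal{F}\mathcal{M}^{-1}\mathcal{F}^{-1}-I=\mathcal{F}(\mathcal{M}^{-1}-I)\mathcal{F}^{-1}$ is absorbed into $R$.

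Granting this ODE, set $\Theta_j(t):=\tfrac{\lambda}{2}\int_1^t|v_j(\tau)|^2\tfrac{d\tau}{\tau}$, so that $\partial_t(v_je^{-i\Theta_j})=R_je^{-i\Theta_j}$; integrating from $t$ to $\infty$ and using the decay of $R$ shows that $v_j(t)e^{-i\Theta_j(t)}$ is Cauchy in $L^2(e_j)\cap L^\infty(e_j)$ as $t\to\infty$, and its limit $W_j$ satisfies \eqref{iv:thm4}. For \eqref{iv:thm5} I would write $|v_j|^2-|W_j|^2=2\re(\overline{W_j}(v_je^{-i\Theta_j}-W_j))+|v_je^{-i\Theta_j}-W_j|^2$, combine \eqref{iv:thm4} with $\|W\|_{L^\infty}\cleq\eps$ to control $\||v(\tau)|^2-|W|^2\|_{L^\infty}$, and deduce that $\Psi:=\tfrac\lambda2\int_1^\infty(|v(\tau)|^2-|W|^2)\tfrac{d\tau}{\tau}$ is a well-defined real-valued element of $L^\infty(\mathcal{G})$ with $\Theta(t)-\tfrac\lambda2|W|^2\log t-\Psi=-\tfrac\lambda2\int_t^\infty(|v|^2-|W|^2)\tfrac{d\tau}{\tau}$ obeying \eqref{iv:thm5}. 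Then \eqref{iv:thm6} follows from \eqref{iv:thm4} and \eqref{iv:thm5} by $|e^{ia}-e^{ib}|\le|a-b|$: this upgrades the phase estimate to $e^{i\Theta_j(t)}=e^{i(\frac\lambda2|W_j|^2\log t+\Psi_j)}(1+O(\eps t^{-\frac14+\delta}\log t))$, and multiplying by $v_j(t)e^{-i\Theta_j(t)}=W_j+O(\eps t^{-\frac14+\delta})$ gives the claim. Finally, for the pointwise asymptotics \eqref{iv:thm7} I would return to the factorization, writing $u=U(t)\mathcal{F}^{-1}v=\mathcal{M}\mathcal{D}(\mathcal{F}\mathcal{M}\mathcal{F}^{-1})v$, replacing $\mathcal{F}\mathcal{M}\mathcal{F}^{-1}$ by the identity to get $u=\mathcal{M}\mathcal{D}v+O(\eps t^{-\frac34+\delta}\log t)$ in $L^\infty$, substituting \eqref{iv:thm6} with $W_+:=We^{i\Psi}$ (so that $|W_{+,j}|=|W_j|$), and evaluating $\mathcal{M}\mathcal{D}$ explicitly.

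The step I expect to be the main obstacle is the derivation of the ODE for $v$ with an integrable remainder --- that is, the error estimates for the approximations $\mathcal{M}\approx I$, $\mathcal{M}^{-1}\approx I$, $\mathcal{F}\mathcal{M}^{-1}\mathcal{F}^{-1}\approx I$ (and, for \eqref{iv:thm7}, $\mathcal{F}\mathcal{M}\mathcal{F}^{-1}\approx I$). These rely on weighted bounds of the type $\|(\mathcal{M}(t)-I)\psi\|_{L^2}\cleq t^{-\theta}\|X^{2\theta}\psi\|_{L^2}$ ($0<\theta<1$) together with the identity $\|XU(-t)u\|_{L^2}=\|U(t)XU(-t)u\|_{L^2}$, and bounding the right-hand side is exactly the point where the star graph genuinely differs from the line: the generator $U(t)XU(-t)$ must be studied together with the Kirchhoff boundary condition, which is precisely what kills the boundary contributions at the vertex $O$ under integration by parts. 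Once such bounds are available --- as they should be from the proof of Theorem~\ref{iv:thm1} --- the remaining arguments are elementary scalar integrations carried out edge by edge, and the logarithmic losses in \eqref{iv:thm5}--\eqref{iv:thm7} simply propagate through these manipulations.
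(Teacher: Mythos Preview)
Your proposal is correct and follows essentially the same Hayashi--Naumkin strategy as the paper: derive the profile ODE $i\partial_t(\mathcal{F}U(-t)u)=\tfrac{\lambda}{2t}|\mathcal{F}U(-t)u|^2\mathcal{F}U(-t)u+\text{(remainder)}$ via the factorization $U(t)=\mathcal{M}\mathcal{D}\mathcal{F}\mathcal{M}$, control the remainder in $L^2\cap L^\infty$ using $\|(\mathcal{M}-I)\psi\|\cleq t^{-\alpha}\|X^{2\alpha}\psi\|$ together with the weighted bounds on $U(-t)u$ from Theorem~\ref{iv:thm1}, integrate the phase-corrected quantity to produce $W$, and then read off $\Psi$ and the pointwise asymptotics. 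The only cosmetic difference is that the paper packages the remainder as the two explicit pieces $I_1$, $I_2$ (already isolated in the proof of Proposition~\ref{iv:pro2}) and defines $\Psi$ as the limit of $\Theta(t):=\tfrac{\lambda}{2}\int_1^t(|\mathcal{F}w(\tau)|^2-|\mathcal{F}w(t)|^2)\tfrac{d\tau}{\tau}$ rather than as your direct integral $\tfrac{\lambda}{2}\int_1^\infty(|v|^2-|W|^2)\tfrac{d\tau}{\tau}$; these are equivalent since $|\mathcal{F}w|=|\mathcal{F}U(-t)u|$.
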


The last result is the failure of scattering for \eqref{NLS^p} when $1\leq p \leq 2$.

\begin{theorem}[Failure of scattering]
\label{thm2.4}
Let $1\leq p\leq 2$. If $u$ is a forward-global solution of \eqref{NLS^p} satisfying $u \in C([0,\infty):\Sigma_c (\mathcal{G}))$ and 
\begin{align*}
	\left\| e^{-it\Delta_{K}} u\left( t \right) - v_{+} \right\|_{\Sigma\left( \mathcal{G}\right)} \to 0 \quad \left( t \to \infty \right)
\end{align*}
for some $v_{+} \in \Sigma_c ( \mathcal{G} )$, then $v_{+} \equiv 0$.
\end{theorem}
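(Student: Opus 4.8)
The plan is to transcribe Cazenave's argument for the nonexistence of scattering states \cite{Caz} to the star graph, using Weder's Fourier transform $\mathcal{F}$ and the factorization $U(t)=\mathcal{M}(t)\mathcal{D}(t)\mathcal{F}\mathcal{M}(t)$ (the Dollard decomposition underlying the modified scattering results above) in place of their Euclidean analogues. Assume, towards a contradiction, that $u$ scatters to some $v_+\in\Sigma_c(\mathcal{G})$ with $v_+\not\equiv0$, and write $w(t):=U(-t)u(t)$ and $\eps(t):=\norm{w(t)-v_+}_{\Sigma}$, so that $\eps(t)\to0$ as $t\to\infty$. First I would extract a convergent integral: applying $U(-t)$ to Duhamel's formula for $u$ and pairing with $v_+$ in $L^2(\mathcal{G})$ yields
\begin{align*}
\tbra{w(t)}{v_+}_{L^2(\mathcal{G})}=\tbra{u(0)}{v_+}_{L^2(\mathcal{G})}+i\lambda\int_0^t\tbra{|u(\sigma)|^p u(\sigma)}{U(\sigma)v_+}_{L^2(\mathcal{G})}\,d\sigma ,
\end{align*}
and since $w(t)\to v_+$ in $L^2(\mathcal{G})$ the left-hand side converges to $\norm{v_+}_{L^2(\mathcal{G})}^2$; hence the improper integral on the right converges, and in particular so does $\int_0^\infty\re\tbra{|u(\sigma)|^p u(\sigma)}{U(\sigma)v_+}_{L^2(\mathcal{G})}\,d\sigma$.

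Next I would show that the integrand is, up to an error, a positive and nonintegrable quantity. With $v(\sigma):=U(\sigma)v_+$, I would decompose $\tbra{|u|^p u}{v}=\norm{v}_{L^{p+2}(\mathcal{G})}^{p+2}+\tbra{|u|^p u-|v|^p v}{v}$ (componentwise), and bound the remainder using $\bigl||a|^p a-|b|^p b\bigr|\cleq(|a|^p+|b|^p)|a-b|$ and H\"older on each edge:
\begin{align*}
\bigl|\tbra{|u(\sigma)|^p u(\sigma)-|v(\sigma)|^p v(\sigma)}{v(\sigma)}\bigr|\cleq\bigl(\norm{u(\sigma)}_{L^{p+2}}^p+\norm{v(\sigma)}_{L^{p+2}}^p\bigr)\norm{u(\sigma)-v(\sigma)}_{L^{p+2}}\norm{v(\sigma)}_{L^{p+2}} .
\end{align*}
Invoking the dispersive-type estimate $\norm{U(\sigma)g}_{L^{p+2}(\mathcal{G})}\cleq\sigma^{-\frac{p}{2(p+2)}}\norm{g}_{\Sigma}$ for $\sigma\ge1$ (which follows from the factorization of $U(\sigma)$ together with $\Sigma(\mathcal{G})\hookrightarrow L^1(\mathcal{G})\cap L^2(\mathcal{G})$), applied with $g=v_+$ and with $g=w(\sigma)-v_+$, and using $u(\sigma)-v(\sigma)=U(\sigma)(w(\sigma)-v_+)$, I obtain $\norm{v(\sigma)}_{L^{p+2}},\norm{u(\sigma)}_{L^{p+2}}\cleq\sigma^{-\frac{p}{2(p+2)}}$ and $\norm{u(\sigma)-v(\sigma)}_{L^{p+2}}\cleq\sigma^{-\frac{p}{2(p+2)}}\eps(\sigma)$, so that the remainder above is $\cleq\sigma^{-p/2}\eps(\sigma)$.

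For the leading term I would instead use the factorization precisely. A change of variables gives $\norm{\mathcal{M}(\sigma)\mathcal{D}(\sigma)\mathcal{F}v_+}_{L^{p+2}(\mathcal{G})}^{p+2}=(2\sigma)^{-p/2}\norm{\mathcal{F}v_+}_{L^{p+2}(\mathcal{G})}^{p+2}$, while $\norm{U(\sigma)v_+-\mathcal{M}(\sigma)\mathcal{D}(\sigma)\mathcal{F}v_+}_{L^{p+2}(\mathcal{G})}=\norm{\mathcal{M}(\sigma)\mathcal{D}(\sigma)\mathcal{F}(\mathcal{M}(\sigma)-I)v_+}_{L^{p+2}(\mathcal{G})}\cleq\sigma^{-1/2}\norm{v_+}_{\Sigma}$ by $|e^{i|x|^2/4\sigma}-1|\cleq\sigma^{-1/2}|x|$. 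Since $\tfrac{p}{2(p+2)}<\tfrac12$, since $\mathcal{F}$ is injective on $L^2(\mathcal{G})$, and since $\mathcal{F}v_+\in L^2(\mathcal{G})\cap L^\infty(\mathcal{G})\subset L^{p+2}(\mathcal{G})$ with $\mathcal{F}v_+\not\equiv0$, there are $c_0>0$ and $\sigma_0\ge1$ with $\norm{v(\sigma)}_{L^{p+2}(\mathcal{G})}^{p+2}\ge c_0\sigma^{-p/2}$ for all $\sigma\ge\sigma_0$. Choosing $T_1\ge\sigma_0$ so that the implicit constant in the remainder bound times $\eps(\sigma)$ is at most $c_0/2$ for $\sigma\ge T_1$ (possible since $\eps\to0$), I reach $\re\tbra{|u(\sigma)|^p u(\sigma)}{U(\sigma)v_+}_{L^2(\mathcal{G})}\ge\tfrac{c_0}{2}\sigma^{-p/2}$ for $\sigma\ge T_1$; integrating over $[T_1,T]$ and letting $T\to\infty$ forces divergence because $p\le2$, contradicting the convergence obtained in the first step. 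Hence $v_+\equiv0$.

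I expect the main obstacle to be the sharp two-sided control of $\norm{U(\sigma)v_+}_{L^{p+2}(\mathcal{G})}$, in particular the lower bound $\cgeq\sigma^{-\frac{p}{2(p+2)}}$ when $v_+\neq0$: this is the one step that genuinely uses the explicit structure of the free evolution on $\mathcal{G}$, through the factorization $U(\sigma)=\mathcal{M}(\sigma)\mathcal{D}(\sigma)\mathcal{F}\mathcal{M}(\sigma)$ and through $\mathcal{F}$ being a bijection on $L^2(\mathcal{G})$, whereas the remaining estimates are the classical Euclidean ones applied on each half-line. The vertex poses no additional difficulty, since every norm in the argument is an edgewise sum and the coupling at $0$ is entirely encoded in $\mathcal{F}$ and in the domain of $\Delta_K$.
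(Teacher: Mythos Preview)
Your argument is correct and is a close variant of the paper's proof: both run Cazenave's nonexistence argument on $\mathcal{G}$ via the factorization $U(t)=\mathcal{M}\mathcal{D}\mathcal{F}\mathcal{M}$, producing an integrand that behaves like a nonzero constant times $t^{-p/2}$ and hence cannot be integrable for $p\le 2$. The organizational difference is in the choice of test function. The paper pairs the equation against $U(t)\varphi$ for an auxiliary $\varphi$ supplied by \cite[Lemma 8]{AIM20} so that $\re\tbra{F(\mathcal{F}v_+)}{\mathcal{F}\varphi}<-\delta$, and then proves a separate convergence lemma (Lemma~\ref{lem6.3}) showing $F(\widetilde u)\to F(\mathcal{F}v_+)$ in $L^1(\mathcal{G})$; you instead pair directly against $U(t)v_+$, which makes the leading term the manifestly positive quantity $\norm{U(\sigma)v_+}_{L^{p+2}}^{p+2}$ and lets you read off the lower bound from $\norm{\mathcal{D}(\sigma)\mathcal{F}v_+}_{L^{p+2}}^{p+2}=(2\sigma)^{-p/2}\norm{\mathcal{F}v_+}_{L^{p+2}}^{p+2}$ together with the unitarity of $\mathcal{F}$. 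Your route is more self-contained (no external lemma from \cite{AIM20} is needed), while the paper's route is more modular and would transfer unchanged to nonlinearities for which $\tbra{F(v)}{v}$ is not automatically sign-definite. One small remark: your claimed bound $\norm{\mathcal{M}\mathcal{D}\mathcal{F}(\mathcal{M}-I)v_+}_{L^{p+2}}\cleq\sigma^{-1/2}\norm{v_+}_{\Sigma}$ is true but does not follow from $|e^{i|x|^2/4\sigma}-1|\cleq\sigma^{-1/2}|x|$ alone; you also need the scaling of $\mathcal{D}$ in $L^{p+2}$, Hausdorff--Young, and the fact that $X^{2\alpha}v_+\in L^{(p+2)/(p+1)}$ for the relevant $\alpha$ (which does hold for $v_+\in\Sigma$). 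In any case, for the lower bound you only need $\norm{\mathcal{F}(\mathcal{M}(\sigma)-I)v_+}_{L^{p+2}}\to 0$, which follows immediately from dominated convergence.
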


%It is worth emphasizing that this result can be extended to the Laplacian with continuous boundary conditions such as the Dirac delta boundary. See Theorem \ref{thm6.1}. %We give a proof containing the Laplacian with continuous boundary conditions in Section \ref{sec6}.   

%%%%%%%%%%%%%%%%%%%%%%%%%%%%%%%%%%%%%%%%%%%%%%%%%%%%%%%%

\section{Preliminalies}

In this section, we introduce some notations and prepare some lemmas. We give their proofs in Appendix \ref{appA}.

First of all, we introduce  the following factorization formula of $U(t)$, which is very useful to investigate the modified scattering. 
\begin{proposition}
\label{prop4.1}
We have  $U(t)=\mathcal{MDFM}$.
\end{proposition}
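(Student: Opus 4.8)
The plan is to combine Weder's spectral diagonalization of $\Delta_K$ with the classical Dollard/Fresnel computation, reducing the matrix-valued identity to two scalar identities on a single half-line. It is convenient to write $E=E(t)$ for the multiplication operator $(E\psi)(\xi)=e^{-it\xi^2}\psi(\xi)$ and to introduce the Kirchhoff scattering matrix $S:=\frac{2}{n}J_n-I_n$, which is a real symmetric involution: using $J_n^2=nJ_n$ one gets $S^2=I_n$. With this notation the transforms of Weder take the compact form $\mathcal{F}=\mathcal{F}^- I_n+\mathcal{F}^+ S$ and $\mathcal{F}^{-1}=\mathcal{F}^+ I_n+\mathcal{F}^- S$, as one checks by substituting $S=\frac{2}{n}J_n-I_n$ into the definitions.

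First I would record the spectral representation $U(t)=\mathcal{F}^{-1}E(t)\mathcal{F}$. This is immediate from the defining property of $\mathcal{F}$ (Weder), namely that $\mathcal{F}$ is unitary on $L^2(\mathcal{G})$ and diagonalizes $\Delta_K$ in the sense $\mathcal{F}\Delta_K\mathcal{F}^{-1}=-\xi^2$; the functional calculus then yields $e^{it\Delta_K}=\mathcal{F}^{-1}e^{-it\xi^2}\mathcal{F}$.

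Next I would expand both sides into their $I_n$- and $S$-components. Since $E$, and likewise the scalar operators $M=M(t)$ and $D=D(t)$, commute with the constant matrices $I_n$ and $S$, and since $S^2=I_n$, expanding $U=(\mathcal{F}^+ I_n+\mathcal{F}^- S)E(\mathcal{F}^- I_n+\mathcal{F}^+ S)$ gives
\begin{align*}
U=\l(\mathcal{F}^+ E\mathcal{F}^- +\mathcal{F}^- E\mathcal{F}^+\r)I_n+\l(\mathcal{F}^+ E\mathcal{F}^+ +\mathcal{F}^- E\mathcal{F}^-\r)S,
\end{align*}
while $\mathcal{MDFM}=(MD\mathcal{F}^- M)I_n+(MD\mathcal{F}^+ M)S$. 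Because $I_n$ and $S$ are linearly independent (indeed $S$ has nonzero off-diagonal entries $\tfrac{2}{n}$ while $I_n$ does not), comparing the coefficients of $I_n$ and $S$ reduces the proposition to the two scalar identities on $L^2(0,\infty)$
\begin{align}
\mathcal{F}^+ E\mathcal{F}^- +\mathcal{F}^- E\mathcal{F}^+ &= MD\mathcal{F}^- M, \label{pf:A}\\
\mathcal{F}^+ E\mathcal{F}^+ +\mathcal{F}^- E\mathcal{F}^- &= MD\mathcal{F}^+ M. \label{pf:B}
\end{align}

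Finally I would verify \eqref{pf:A} and \eqref{pf:B} by the usual Fresnel computation. On a dense class (say Schwartz functions supported in $(0,\infty)$, where Fubini applies after a Gaussian regularization replacing $t$ by $t-i0$) the kernel of the left-hand side of \eqref{pf:A} is $(2\pi)^{-1}\int_0^\infty 2\cos((x-y)\xi)e^{-it\xi^2}\,d\xi=(2\pi)^{-1}\int_{\R}e^{i(x-y)\xi-it\xi^2}\,d\xi=(4\pi it)^{-1/2}e^{i(x-y)^2/4t}$, using that the integrand is even in $\xi$ together with the Fresnel formula; expanding $(x-y)^2=x^2-2xy+y^2$ shows this coincides with the kernel of $MD\mathcal{F}^- M$. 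Identity \eqref{pf:B} is identical except that the two surviving terms produce $2\cos((x+y)\xi)$, hence the kernel $(4\pi it)^{-1/2}e^{i(x+y)^2/4t}$, which matches $MD\mathcal{F}^+ M$. The only genuine technical point is the rigorous handling of these conditionally convergent oscillatory integrals and the interchange of the $\xi$- and $y$-integrations; I would dispatch it by the regularization above and then pass to the identity on all of $L^2(\mathcal{G})$ by density together with the unitarity of each operator involved. Everything else is elementary algebra, so this interchange step is the one requiring real care.
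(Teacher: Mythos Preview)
Your proof is correct and takes a genuinely different route from the paper's. The paper quotes the explicit propagator formula
\[
U(t)=(\mathcal{U}_t^{-}-\mathcal{U}_t^{+})I_n+\tfrac{2}{n}\mathcal{U}_t^{+}J_n,\qquad [\mathcal{U}_t^{\pm}f](x)=\int_0^\infty \frac{e^{i|x\pm y|^2/4t}}{\sqrt{4\pi it}}\,f(y)\,dy,
\]
from the literature (Adami--Cacciapuoti--Finco--Noja, Grecu--Ignat) and then simply expands $|x\pm y|^2=x^2\pm 2xy+y^2$ to read off $\mathcal{U}_t^{\pm}=MD\mathcal{F}^{\pm}M$; plugging this back in and matching with the definition of $\mathcal{F}$ finishes the argument in two lines. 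You instead start from the spectral side $U(t)=\mathcal{F}^{-1}E(t)\mathcal{F}$, use the involution $S=\tfrac{2}{n}J_n-I_n$ to split the matrix identity into the two scalar identities \eqref{pf:A}--\eqref{pf:B}, and then compute the kernels by Fresnel integration. In effect you \emph{derive} the kernel formula the paper quotes, and your scalar identities are exactly $\mathcal{U}_t^{\mp}=MD\mathcal{F}^{\mp}M$ in disguise. The paper's route is shorter because the oscillatory integral has already been done elsewhere; yours is more self-contained (needing only Weder's diagonalization) and the $S^2=I_n$ bookkeeping is a clean way to organize the four cross terms. One small caveat: your linear-independence argument for $I_n$ and $S$ tacitly assumes $n\ge 2$; for $n=1$ one has $S=I_1$ and the two scalar identities collapse to a single one, which is harmless but worth noting.
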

This is similar to the factorization formula of the usual Schr\"{o}dinger propagator $e^{it\Delta}=MD\mathcal{F}_{\mathbb{R}}M$ on the Euclidean space, where $\mathcal{F}_{\mathbb{R}}$ denotes the usual Fourier transform on $\mathbb{R}$.

We define the co-Fourier transform $\mathcal{F}_{c}$ by 
\begin{align*}
	\mathcal{F}_{c}:= 
	(\mathcal{F}^{-} + \mathcal{F}^{+})I_{n} - \frac{2}{n} \mathcal{F}^{+}J_{n}
\end{align*}
and its inverse $\mathcal{F}_{c}^{-1}$ is given by $\mathcal{F}_{c}^{-1} =(\mathcal{F}_{c})^* = (\mathcal{F}^{+} + \mathcal{F}^{-})I_{n} - \frac{2}{n} \mathcal{F}^{-}J_{n}$.  
Then, we have the following relation between $\mathcal{F}$ and $\mathcal{F}_{c}$.

\begin{lemma}
\label{lem4.2}
If $\varphi \in H_c^1(\mathcal{G})$, then we have
\begin{align*}
	X\mathcal{F}^{-1}\varphi &= i\mathcal{F}_{c}^{-1} \partial_x \varphi,
	\\
	\mathcal{F}_{c} \partial_x \varphi &= i X\mathcal{F}\varphi.
\end{align*}
\end{lemma}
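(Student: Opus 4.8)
\textbf{Proof proposal for Lemma \ref{lem4.2}.}

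The plan is to reduce everything to the well-known identities on the half-line for $\mathcal{F}^{\pm}$, namely that integration by parts gives $\mathcal{F}^{\pm}(\partial_x g) = \mp i\xi \mathcal{F}^{\pm} g \mp (2\pi)^{-1/2} g(0)$ and, after differentiating under the integral sign, $\partial_\xi \mathcal{F}^{\pm} g = \pm i \mathcal{F}^{\pm}(x g)$; equivalently $X \mathcal{F}^{\pm} g = \mp i \partial_\xi \mathcal{F}^{\pm} g$. First I would record these two scalar facts and note that the boundary terms at $0$ are exactly what the continuity/Kirchhoff structure is designed to kill once we act with the full matrix-valued $\mathcal{F}$. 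Then I would simply expand both sides of each claimed identity using the definitions
$\mathcal{F}^{-1} = (\mathcal{F}^{+} - \mathcal{F}^{-}) I_n + \tfrac{2}{n}\mathcal{F}^{-} J_n$,
$\mathcal{F}_c^{-1} = (\mathcal{F}^{+} + \mathcal{F}^{-}) I_n - \tfrac{2}{n}\mathcal{F}^{-} J_n$,
$\mathcal{F} = (\mathcal{F}^{-} - \mathcal{F}^{+}) I_n + \tfrac{2}{n}\mathcal{F}^{+} J_n$,
$\mathcal{F}_c = (\mathcal{F}^{+} + \mathcal{F}^{-}) I_n - \tfrac{2}{n}\mathcal{F}^{+} J_n$,
and match coefficients of $I_n$ and of $J_n$ separately.

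For the first identity, apply $X$ to $\mathcal{F}^{-1}\varphi$: on each block $X$ becomes $-i\partial_\xi$ on the $\mathcal{F}^{+}$ piece and $+i\partial_\xi$ on the $\mathcal{F}^{-}$ piece, using $X\mathcal{F}^{\pm} = \mp i \partial_\xi \mathcal{F}^{\pm}$. This turns $(\mathcal{F}^{+}-\mathcal{F}^{-})I_n + \tfrac{2}{n}\mathcal{F}^{-}J_n$ acting on $\varphi$ into $-i(\mathcal{F}^{+}+\mathcal{F}^{-})I_n + \tfrac{2i}{n}\mathcal{F}^{-}J_n$ acting on $\varphi$ — wait, more carefully: $X\mathcal{F}^{+}\varphi = -i\partial_\xi\mathcal{F}^{+}\varphi = \mathcal{F}^{+}(x\varphi)$ and $X\mathcal{F}^{-}\varphi = i\partial_\xi\mathcal{F}^{-}\varphi = -\mathcal{F}^{-}(x\varphi)$, so $X\mathcal{F}^{-1}\varphi$ has no $x$-weight form directly; instead one uses $\partial_\xi\mathcal{F}^{\pm}g = \pm i\mathcal{F}^{\pm}(xg)$ in reverse together with the boundary-term identity to express $X\mathcal{F}^{-1}\varphi$ as $i\mathcal{F}_c^{-1}\partial_x\varphi$ plus boundary contributions proportional to $\varphi(0)$. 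The point is that on $H_c^1(\mathcal{G})$ the vector $\varphi(0)$ is $(c,c,\dots,c)^T$, so $J_n \varphi(0) = n\varphi(0)$ and $I_n\varphi(0) = \varphi(0)$; the coefficient combination in $\mathcal{F}^{-1}$ is such that the boundary terms $(\mathcal{F}^{+}-\mathcal{F}^{-})$-type and $\tfrac{2}{n}\mathcal{F}^{-}$-type cancel exactly when hit with this constant vector. The second identity $\mathcal{F}_c\partial_x\varphi = iX\mathcal{F}\varphi$ is proved the same way, or obtained by taking adjoints / applying the first identity with $\mathcal{F}$ and $\mathcal{F}_c$ interchanged and using $\mathcal{F}_c\mathcal{F}_c^{-1}=I$, $\mathcal{F}\mathcal{F}^{-1}=I$ on the relevant domain.

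The main obstacle is purely bookkeeping: keeping track of the signs and of the boundary terms at $x=0$ under the several sign conventions in $\mathcal{F}^{\pm}$, $\mathcal{F}$, $\mathcal{F}_c$. The conceptual content is just that the Kirchhoff condition ($f_j(0)$ all equal, $\sum f_j'(0+)=0$) is precisely what makes $\mathcal{F}$ intertwine $X$ and $i\partial_x$ the way the ordinary Fourier transform does on $\mathbb{R}$; concretely, for $\varphi\in H_c^1$ the vector $\partial_x\varphi(0+)$ need not vanish but $J_n\partial_x\varphi(0+)=0$ would require the Kirchhoff sum condition, which holds for $\mathcal{F}^{-1}\varphi$ in $\mathscr{D}(\Delta_K)$ rather than for $\varphi$ itself — so the cleanest route is to verify the boundary terms cancel using only $\varphi(0)\in\Span\{(1,\dots,1)^T\}$, which is exactly the hypothesis $\varphi\in H_c^1(\mathcal{G})$. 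I would therefore organize the proof as: (i) scalar half-line identities with explicit boundary terms; (ii) substitute into the matrix definitions; (iii) collect the $I_n$ and $J_n$ parts; (iv) observe the boundary vector is constant and the coefficients cancel; (v) deduce the second identity by the analogous computation or duality.
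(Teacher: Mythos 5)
Your proposal follows essentially the same route as the paper: the scalar integration-by-parts identity $x\mathcal{F}^{\pm}\varphi = \pm\tfrac{i}{\sqrt{2\pi}}\varphi(0) \pm i\,\mathcal{F}^{\pm}(\partial_x\varphi)$ on the half-line, substitution into the matrix definitions of $\mathcal{F}^{-1}$ and $\mathcal{F}_c^{-1}$, and cancellation of the boundary contributions using only the continuity $\varphi_j(0)=\varphi_k(0)$ at the vertex via $J_n\varphi(0)=n\varphi(0)$ (the paper proves the first identity exactly this way and treats the second as analogous). The one detail to correct is the sign of the boundary term in your scalar identity --- it is $-(2\pi)^{-1/2}g(0)$ for \emph{both} choices of sign in $\mathcal{F}^{\pm}(\partial_x g)$, not $\mp(2\pi)^{-1/2}g(0)$ --- but since you explicitly flag the sign bookkeeping as the remaining task and correctly identify the cancellation mechanism, this does not affect the strategy.
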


We note that the above lemma does not hold for general $\varphi \in H^1(\mathcal{G})$. Namely, we need to assume the continuity at the origin. On the other hand,  in the following lemma, we do not need to assume the continuity. 

\begin{lemma}
\label{lem4.4}
For $\varphi \in H^{0,1}(\mathcal{G})$, we have
\begin{align*}
	\partial_{x} (\mathcal{F} \varphi)
	=- i  \mathcal{F}_{c}( X\varphi).
\end{align*}
\end{lemma}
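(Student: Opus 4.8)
The plan is to reduce the identity to a direct computation using the definitions of $\mathcal F$, $\mathcal F_c$, and the half-line transforms $\mathcal F^{\pm}$, together with integration by parts on each edge. Writing $\mathcal F = (\mathcal F^- - \mathcal F^+)I_n + \tfrac{2}{n}\mathcal F^+ J_n$ and $\mathcal F_c = (\mathcal F^- + \mathcal F^+)I_n - \tfrac{2}{n}\mathcal F^+ J_n$, one sees that the claimed formula will follow provided the two scalar relations
\begin{align*}
\partial_\xi \big[\mathcal F^{\pm}\varphi\big](\xi) = \mp i\,\mathcal F^{\mp}(x\varphi)(\xi)
\end{align*}
hold for $\varphi \in H^{0,1}(e_j)$, because then $\partial_\xi(\mathcal F^- - \mathcal F^+)\varphi = -i(\mathcal F^+ + \mathcal F^-)(x\varphi)$ and $\partial_\xi \mathcal F^+\varphi = -i\mathcal F^-(x\varphi)$, and matching the $I_n$-part and the $J_n$-part of $\mathcal F$ against $\mathcal F_c$ gives exactly $\partial_x(\mathcal F\varphi) = -i\mathcal F_c(X\varphi)$. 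First I would therefore record these scalar identities: differentiating $[\mathcal F^{\pm}\varphi](\xi) = (2\pi)^{-1/2}\int_0^\infty e^{\pm i x\xi}\varphi(x)\,dx$ under the integral sign brings down a factor $\pm i x$, which is exactly $\mp i$ times the integrand of $\mathcal F^{\mp}(x\varphi)$.

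The only genuine point requiring care is the justification of differentiation under the integral sign, i.e. that the formal computation is legitimate for $\varphi \in H^{0,1}(\mathcal G)$ rather than for Schwartz functions. Here I would argue by density: for $\varphi$ in a dense subclass (say, smooth and compactly supported on each edge) the identity is immediate, both sides are continuous (indeed bounded) linear maps from $H^{0,1}(\mathcal G)$ into, say, $L^2(\mathcal G)$ — the left side because $\mathcal F$ maps $H^{0,1}$ boundedly to $H^1$ and $\partial_x$ maps $H^1$ to $L^2$, the right side because $X$ maps $H^{0,1}$ to $L^2$ and $\mathcal F_c$ is bounded on $L^2$ (it is, like $\mathcal F$, built from the $L^2$-bounded operators $\mathcal F^\pm$) — and the identity then extends by continuity. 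Crucially, and in contrast to Lemma \ref{lem4.2}, no boundary term at $x=0$ appears: the computation uses only the multiplication-by-$x$/differentiation-in-$\xi$ duality of $\mathcal F^\pm$ and never integrates by parts in $x$, so no continuity condition at the vertex is needed. The boundedness of $\mathcal F$ from $H^{0,1}(\mathcal G)$ to $H^1(\mathcal G)$ that I invoke for the density argument is itself essentially the content of the lemma, so to avoid circularity I would instead phrase the density step using only that $\mathcal F^\pm$ and their derivatives are controlled in $L^2$ by the $L^2$-norms of $\varphi$ and $x\varphi$, which is elementary.

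I do not expect any substantial obstacle; the main (mild) subtlety is simply bookkeeping the $I_n$ versus $\tfrac{2}{n}J_n$ components correctly and making sure the density/limiting argument is stated so as not to presuppose what is being proved. Since the paper defers these proofs to Appendix \ref{appA}, a clean write-up would state the two scalar identities as a sub-lemma, prove them by differentiation under the integral with an $L^2$-approximation, and then assemble the matrix identity by linearity.
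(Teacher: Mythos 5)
Your overall strategy --- differentiate $\mathcal F^{\pm}$ under the integral sign in the frequency variable, observe that no boundary term (hence no continuity condition at the vertex) arises, and then assemble the $I_n$- and $\tfrac{2}{n}J_n$-components --- is exactly the paper's: the paper isolates the scalar identity as a separate lemma for $\varphi\in H^{0,1}(\mathbb R_+)$ and assembles, just as you propose. However, your scalar identity is wrong, and with it the proof does not close. Differentiating $[\mathcal F^{\pm}\varphi](\xi)=(2\pi)^{-1/2}\int_0^\infty e^{\pm ix\xi}\varphi(x)\,dx$ in $\xi$ brings down the factor $\pm ix$ while leaving the kernel $e^{\pm ix\xi}$ unchanged, so the correct relation is
\begin{equation*}
\partial_\xi\bigl[\mathcal F^{\pm}\varphi\bigr]=\pm i\,\mathcal F^{\pm}(x\varphi),
\end{equation*}
not $\mp i\,\mathcal F^{\mp}(x\varphi)$: there is no mechanism by which $\xi$-differentiation turns $\mathcal F^{+}$ into $\mathcal F^{-}$. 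With your version the assembly fails concretely: the $J_n$-part of $\partial_\xi\mathcal F$ would be $\tfrac{2}{n}\partial_\xi\mathcal F^{+}J_n=-\tfrac{2i}{n}\mathcal F^{-}J_nX$, which cannot match the $-\tfrac{2}{n}\mathcal F^{+}J_n$ occurring in $\mathcal F_c$; carried through consistently, your relations yield $+i\,\mathcal F_c^{-1}(X\varphi)$ rather than $-i\,\mathcal F_c(X\varphi)$. (There is also an internal slip: your own relations give $\partial_\xi(\mathcal F^{-}-\mathcal F^{+})\varphi=+i(\mathcal F^{+}+\mathcal F^{-})(x\varphi)$, not $-i(\mathcal F^{+}+\mathcal F^{-})(x\varphi)$.)

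With the corrected scalar identity the argument is exactly the paper's and works: the $I_n$-part gives $-i(\mathcal F^{-}+\mathcal F^{+})I_nX$, the $J_n$-part gives $+\tfrac{2i}{n}\mathcal F^{+}J_nX$, and the sum is $-i\bigl[(\mathcal F^{-}+\mathcal F^{+})I_n-\tfrac{2}{n}\mathcal F^{+}J_n\bigr]X=-i\,\mathcal F_cX$. Your remarks on the absence of boundary terms (in contrast to Lemma \ref{lem4.2}) and on justifying the differentiation under the integral for $\varphi\in H^{0,1}$ by a density argument are correct, and indeed somewhat more careful than the paper, which performs the computation formally.
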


\begin{remark}
It is worth emphasizing that $\partial_x \mathcal{F} \neq i \mathcal{F} X$ unlike the usual Fourier transform on $\mathbb{R}$. %For the Fourier transform on the star graph, the co-Fourier transform $\mathcal{F}_{c}$ appears to replace the differential to the multiplier by the Fourier transform. 
\end{remark}

It is known by \cite{Wed15} that the Fourier transform $\mathcal{F}$ and the co-Fourier transform $ \mathcal{F}_{c}$ are  unitary operators on $L^2(\mathcal{G})$. Namely, we have the following. 

\begin{lemma}
\label{lem4.1}
We have $\tbra{\mathcal{F}f}{g}_{L^2(\mathcal{G})} = \tbra{f}{\mathcal{F}^{-1}g}_{L^2(\mathcal{G})}$. Especially, 
$\| \mathcal{F}f\|_{L^2(\mathcal{G})}=\|f\|_{L^2(\mathcal{G})}$. Moreover, similar statements hold for the co-Fourier transform $\mathcal{F}_{c}$. 
\end{lemma}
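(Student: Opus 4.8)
The plan is to reduce everything to the two scalar half-line operators $\mathcal{F}^{\pm}$ on $L^2(0,\infty)$ and to isolate a few algebraic facts about them, after which both assertions follow by pure bookkeeping in $M_n(\C)\otimes\mathcal{B}(L^2(0,\infty))$, where the matrices $I_n,J_n$ and the operators $\mathcal{F}^{\pm}$ sit in commuting factors.

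First I would record the adjoint relation $(\mathcal{F}^{\pm})^*=\mathcal{F}^{\mp}$ on $L^2(0,\infty)$. This is immediate from Fubini: for $f,g\in L^2(0,\infty)$,
\[
\tbra{\mathcal{F}^{+}f}{g}=\int_0^\infty\Big((2\pi)^{-1/2}\int_0^\infty e^{ix\xi}f(x)\,dx\Big)\overline{g(\xi)}\,d\xi=\tbra{f}{\mathcal{F}^{-}g},
\]
and symmetrically for $\mathcal{F}^{-}$. Since $I_n$ and $J_n$ are real symmetric, taking the Hilbert-space adjoint of $\mathcal{F}=(\mathcal{F}^{-}-\mathcal{F}^{+})I_n+\tfrac{2}{n}\mathcal{F}^{+}J_n$ merely interchanges $\mathcal{F}^{+}\leftrightarrow\mathcal{F}^{-}$, and the result is exactly the operator named $\mathcal{F}^{-1}$ in the definitions. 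Hence the duality identity $\tbra{\mathcal{F}f}{g}=\tbra{f}{\mathcal{F}^{-1}g}$ holds essentially by construction; the same observation applies verbatim to $\mathcal{F}_c$ and $\mathcal{F}_c^{-1}$.

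Next I would establish two composition identities on $L^2(0,\infty)$, namely $\mathcal{F}^{+}\mathcal{F}^{-}+\mathcal{F}^{-}\mathcal{F}^{+}=I$ and $(\mathcal{F}^{+})^2+(\mathcal{F}^{-})^2=0$. Both follow by relating $\mathcal{F}^{\pm}$ to the full-line transform: writing $E$ for extension by zero from $(0,\infty)$ to $\R$, $R$ for restriction back, and $\mathcal{F}_{\R}$ for the unitary full-line Fourier transform normalized so that $\mathcal{F}^{-}f=(\mathcal{F}_{\R}Ef)|_{(0,\infty)}$, one checks $\mathcal{F}^{+}f(\xi)=(\mathcal{F}_{\R}Ef)(-\xi)$ for $\xi>0$. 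The first identity is then Plancherel on $\R$, which yields $\norm{\mathcal{F}^{+}f}_{L^2}^2+\norm{\mathcal{F}^{-}f}_{L^2}^2=\norm{f}_{L^2}^2$, combined with the adjoint relation of the previous step. The second identity uses $\mathcal{F}_{\R}^2=$ reflection: summing $(\mathcal{F}^{\pm})^2$ collapses the two frequency cut-offs and leaves $R\,\mathcal{F}_{\R}^{2}\,E=R\circ(\text{reflection})\circ E$, which carries a function supported on $(0,\infty)$ to one supported on $(-\infty,0)$ and then restricts it to $(0,\infty)$, giving $0$.

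Finally I would feed these into the matrix algebra. Using $J_n^2=nJ_n$ and $I_nJ_n=J_n$, a direct expansion gives
\[
\mathcal{F}^{-1}\mathcal{F}=\big(\mathcal{F}^{+}\mathcal{F}^{-}+\mathcal{F}^{-}\mathcal{F}^{+}-(\mathcal{F}^{+})^2-(\mathcal{F}^{-})^2\big)I_n+\tfrac{2}{n}\big((\mathcal{F}^{+})^2+(\mathcal{F}^{-})^2\big)J_n,
\]
so the two identities force $\mathcal{F}^{-1}\mathcal{F}=I_n$ (the identity on $L^2(\mathcal{G})$). Equivalently, and more transparently, one may diagonalize $J_n$ (eigenvalue $n$ on the constant vector, $0$ on its orthogonal complement): on these invariant subspaces $\mathcal{F}$ acts as the scalar operators $\mathcal{F}^{-}+\mathcal{F}^{+}$ and $\mathcal{F}^{-}-\mathcal{F}^{+}$, each an isometry on $L^2(0,\infty)$ since $(\mathcal{F}^{-}\pm\mathcal{F}^{+})^*(\mathcal{F}^{-}\pm\mathcal{F}^{+})=I$ by the two identities. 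The norm statement then follows from $\norm{\mathcal{F}f}^2=\tbra{f}{\mathcal{F}^{-1}\mathcal{F}f}=\norm{f}^2$. For $\mathcal{F}_c$ the computation is identical up to the sign of the $J_n$-term and the interchange of the two scalar operators, so the same two identities yield its unitarity as well. I expect the only real content to be the composition identity $(\mathcal{F}^{+})^2+(\mathcal{F}^{-})^2=0$; everything else is either a one-line use of Plancherel or routine algebra.
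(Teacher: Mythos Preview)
Your proof is correct and takes a genuinely different route from the paper. The paper does not compute anything: it simply invokes the general spectral theory of Weder~\cite{Wed15}, which constructs the generalized Fourier transform associated with any self-adjoint boundary condition on the star graph and proves its unitarity; for $\mathcal{F}_{c}$ the paper observes that it is precisely the Fourier transform attached to the Laplacian with the ``dual'' boundary condition $Bf(0)+Af'(0+)=0$ (the matrices $A,B$ of~\eqref{eq2.7} interchanged), so the same black box applies again. Your argument, by contrast, is elementary and self-contained: you reduce to the scalar half-line operators $\mathcal{F}^{\pm}$, relate them to the full-line transform via extension and restriction, and extract the two identities $\mathcal{F}^{+}\mathcal{F}^{-}+\mathcal{F}^{-}\mathcal{F}^{+}=I$ and $(\mathcal{F}^{+})^{2}+(\mathcal{F}^{-})^{2}=0$ from Plancherel and from $\mathcal{F}_{\R}^{2}=\text{reflection}$. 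The diagonalization of $J_{n}$ then shows that $\mathcal{F}$ acts as (a multiple of) the cosine transform on the symmetric subspace and the sine transform on its orthogonal complement, each classically an isometry on $L^{2}(0,\infty)$. What your approach buys is independence from any external machinery, at the cost of a page of algebra; what the paper's approach buys is brevity, together with the conceptual identification of $\mathcal{F}_{c}$ as the spectral transform of another self-adjoint Laplacian on $\mathcal{G}$, a structural fact your direct computation establishes implicitly but does not display.
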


%We can replace derivative by multiplier through the Fourier transform $\mathcal{F}$ and the co-Fourier transform $\mathcal{F}_{c}$. 

We have the Hausdorff--Young inequality for $\mathcal{F}$ and $\mathcal{F}_{c}$ as follows.

\begin{lemma}[Hausdorff--Young inequality]
\label{HY}
For $2\leq p \leq \infty$, it is valid that 
\begin{align*}
	&\| \mathcal{F} f \|_{L^{p}(\mathcal{G})} \cleq \| f \|_{L^{p'}(\mathcal{G})},
	\\
	&\| \mathcal{F}_{c} f \|_{L^{p}(\mathcal{G})} \cleq \| f \|_{L^{p'}(\mathcal{G})},
\end{align*}
where $p'$ is the H\"{o}lder conjugate of $p$. 
\end{lemma}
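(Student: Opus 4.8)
The plan is to prove the Hausdorff--Young inequality for $\mathcal{F}$ and $\mathcal{F}_c$ by reducing it to the classical Hausdorff--Young inequality on the half-line, using the explicit matrix form of these operators together with the fact that $\mathcal{F}^{\pm}$ are (up to constants) half-line Fourier transforms. The two estimates are entirely parallel, so I would prove the bound for $\mathcal{F}$ in detail and remark that the argument for $\mathcal{F}_c$ is identical.

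\textbf{Setup via the classical half-line bound.}
First I would record the building block: for $[\mathcal{F}^{\pm}\psi](\xi) = (2\pi)^{-1/2}\int_0^\infty e^{\pm ix\xi}\psi(x)\,dx$, the classical Hausdorff--Young inequality on $\R$ (applied to the extension of $\psi$ by zero outside $(0,\infty)$) gives, for $2\le p\le\infty$ with H\"older conjugate $p'$,
\begin{align*}
\|\mathcal{F}^{\pm}\psi\|_{L^p(0,\infty)} \cleq \|\psi\|_{L^{p'}(0,\infty)}.
\end{align*}
Indeed $\mathcal{F}^+\psi$ is the restriction to $(0,\infty)$ of the full-line Fourier transform of the zero-extension of $\psi$, and $\mathcal{F}^-\psi$ is obtained from $\mathcal{F}^+$ by the reflection $\xi\mapsto-\xi$, which is an isometry on every $L^p$; so both satisfy the stated bound with the standard constant.

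\textbf{Reduction of the matrix operator to components.}
Next I would unwind the definition $\mathcal{F} = (\mathcal{F}^- - \mathcal{F}^+)I_n + \tfrac{2}{n}\mathcal{F}^+ J_n$ componentwise. Writing $f = (f_1,\dots,f_n)^T$, the $j$-th component is
\begin{align*}
(\mathcal{F}f)_j = \mathcal{F}^- f_j - \mathcal{F}^+ f_j + \frac{2}{n}\mathcal{F}^+\Big(\sum_{k=1}^n f_k\Big),
\end{align*}
since $J_n$ has every entry equal to $1$. Now I would bound the $L^p(e_j)$ norm of this by the triangle inequality, apply the half-line estimate to each of the three terms, and control $\|\sum_k f_k\|_{L^{p'}(e_j)} \le \sum_k \|f_k\|_{L^{p'}(e_j)}$. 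Since the underlying measure on each $e_j$ is the same half-line measure, each term is $\cleq \sum_{k} \|f_k\|_{L^{p'}(e_k)}$ up to the absolute constant from the classical inequality and the factor $2/n$. Summing over $j$ (in the $p$-power sense for $p<\infty$, or taking the supremum for $p=\infty$) and using that on a finite index set the $\ell^p$ and $\ell^{p'}$ norms of the component-norm vector are equivalent up to constants depending only on $n$, I obtain $\|\mathcal{F}f\|_{L^p(\mathcal{G})} \cleq \|f\|_{L^{p'}(\mathcal{G})}$, with an implied constant absorbing $n$.

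\textbf{Main obstacle and conclusion.}
The only genuinely delicate point is bookkeeping the passage between the componentwise $L^p$ estimates and the graph norm $\|\cdot\|_{L^p(\mathcal{G})}$: for $1\le p'\le 2 \le p$ one must not confuse the exponent used for summing over components with the exponent on each edge. Because the paper's graph norm sums the $p$-th powers of the edge norms (not the $p'$-th powers), I would handle this by the elementary finite-dimensional inequality $\|(a_j)_j\|_{\ell^p} \le \|(a_j)_j\|_{\ell^{p'}} \le n^{1/p'-1/p}\|(a_j)_j\|_{\ell^{p'}}$ for $p'\le p$, which costs only a constant $C(n)$; this is harmless since the statement is only asserted up to the implicit constant in $\cleq$. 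Finally, since $\mathcal{F}_c = (\mathcal{F}^- + \mathcal{F}^+)I_n - \tfrac{2}{n}\mathcal{F}^+ J_n$ differs from $\mathcal{F}$ only by signs and has the identical structure built from $\mathcal{F}^\pm$ and $J_n$, the same three-term triangle-inequality argument yields $\|\mathcal{F}_c f\|_{L^p(\mathcal{G})} \cleq \|f\|_{L^{p'}(\mathcal{G})}$ verbatim, completing the proof.
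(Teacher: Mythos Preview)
Your argument is correct. The paper, however, proceeds differently: instead of invoking the scalar Hausdorff--Young inequality for $\mathcal{F}^\pm$ at every exponent and then handling the matrix structure by the triangle inequality, it establishes only the two endpoint bounds for the full operator $\mathcal{F}$ on $\mathcal{G}$ --- the $L^2\to L^2$ isometry from Lemma~\ref{lem4.1} (Weder's unitarity) and the elementary $L^1(\mathcal{G})\to L^\infty(\mathcal{G})$ bound coming from $\|\mathcal{F}^\pm\psi\|_{L^\infty}\le(2\pi)^{-1/2}\|\psi\|_{L^1}$ --- and then applies Riesz--Thorin interpolation directly. The paper's route is a bit cleaner in that it avoids the $\ell^p/\ell^{p'}$ bookkeeping over the $n$ edges (and incidentally the small slip in your displayed chain $\|(a_j)\|_{\ell^p}\le\|(a_j)\|_{\ell^{p'}}\le n^{1/p'-1/p}\|(a_j)\|_{\ell^{p'}}$, whose last term should presumably end in $\ell^p$); your route has the advantage of not relying on the cited unitarity result and of making the dependence on $n$ explicit. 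Either way the same inequality falls out, and the $\mathcal{F}_c$ case is identical in both approaches.
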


We have the following decay estimate for $\mathcal{M}-1$. 

\begin{lemma}
\label{lem4.5}
Let $1\leq p \leq \infty$ and  $\alpha \in [0,1/2]$. For $t>0$, we have
\begin{align*}
	\norm{(\mathcal{M}-1)f}_{L^p(\mathcal{G})} \cleq  |t|^{-\alpha} \norm{X^{2\alpha}f}_{L^p(\mathcal{G})},
\end{align*}
where $\mathcal{M}-1$ means $\mathcal{M}-I_{n}$ and $X^{2\alpha}:= x^{2\alpha}I_n$ for $x\geq 0$. 
\end{lemma}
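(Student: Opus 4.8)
The plan is to reduce the claimed operator bound to a single elementary pointwise inequality for the scalar phase factor and then integrate it componentwise over the edges. Recall that $\mathcal{M}=\mathcal{M}(t)=M(t)I_n$ acts diagonally, with $[M(t)\varphi](x)=e^{i|x|^2/(4t)}\varphi(x)$, and that on each external edge $e_j=(0,\infty)$ one has $x\geq 0$, so $|x|^2=x^2$. Hence $(\mathcal{M}-1)f$ is given, component by component, by multiplication by the scalar $e^{ix^2/(4t)}-1$, namely $[(\mathcal{M}-1)f]_j(x)=\l(e^{ix^2/(4t)}-1\r)f_j(x)$. Thus it suffices to bound this scalar multiplier by $t^{-\alpha}x^{2\alpha}$ up to a constant.

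The key step is the interpolated estimate that for every $\theta\in\R$ and every $s\in[0,1]$,
\begin{align*}
	|e^{i\theta}-1|\leq 2^{1-s}|\theta|^{s}.
\end{align*}
This follows at once by interpolating the two trivial bounds $|e^{i\theta}-1|\leq 2$ and $|e^{i\theta}-1|=2|\sin(\theta/2)|\leq|\theta|$: writing $|e^{i\theta}-1|=|e^{i\theta}-1|^{1-s}\,|e^{i\theta}-1|^{s}$ and applying the first bound to the first factor and the second bound to the second factor gives the claim. Applying this with $\theta=x^2/(4t)$ and $s=\alpha$, which is admissible since $\alpha\in[0,1/2]\subset[0,1]$, yields the pointwise bound
\begin{align*}
	\l|e^{ix^2/(4t)}-1\r|\leq 2^{1-\alpha}\l(\frac{x^2}{4t}\r)^{\alpha}=2^{1-3\alpha}\,t^{-\alpha}x^{2\alpha}
\end{align*}
for all $x\geq 0$ and $t>0$.

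Combining the two displays, on each edge we obtain the pointwise estimate $|[(\mathcal{M}-1)f]_j(x)|\cleq t^{-\alpha}x^{2\alpha}|f_j(x)|=t^{-\alpha}|(X^{2\alpha}f)_j(x)|$. Taking $L^p(e_j)$ norms (for $p=\infty$, essential suprema) and then assembling via the definition of the $L^p(\mathcal{G})$ norm---that is, summing the $p$-th powers over $j=1,\dots,n$ and taking the $p$-th root when $p<\infty$, and taking the supremum over $j$ when $p=\infty$---gives $\norm{(\mathcal{M}-1)f}_{L^p(\mathcal{G})}\cleq t^{-\alpha}\norm{X^{2\alpha}f}_{L^p(\mathcal{G})}$, as desired. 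I do not expect any genuine obstacle here: the entire content is the two-line interpolation inequality for $e^{i\theta}-1$, and the passage from the pointwise bound to the $L^p(\mathcal{G})$ bound is immediate because $\mathcal{M}$ acts as a diagonal multiplication operator, so no cross terms between the edges arise. The endpoints are consistent with the statement: $\alpha=0$ reduces to the trivial bound $\norm{(\mathcal{M}-1)f}_{L^p}\leq 2\norm{f}_{L^p}$, while $\alpha=1/2$ uses the Lipschitz bound $|e^{i\theta}-1|\leq|\theta|$ directly.
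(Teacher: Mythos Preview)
Your proof is correct and follows essentially the same approach as the paper: both use the pointwise H\"older-type bound $|e^{i\theta}-1|\cleq|\theta|^{s}$ applied to $\theta=x^2/(4t)$, and then pass to $L^p(\mathcal{G})$ componentwise using that $\mathcal{M}$ is a diagonal multiplication operator. The paper states the pointwise inequality directly (citing H\"older continuity of $e^{ix}$), whereas you supply the short interpolation argument; otherwise the arguments are the same.
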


\begin{lemma} \label{iv:0}
Let $f \in \Sigma_{c}(\mathcal{G})$. 
There exists a constant $c_0>0$ such that %It holds that
\begin{align*}
	%\norm{J(t)(|u|^2u)}_{L^2} = 
	\norm{XU(-t)|f|^2f}_{L^2}
	& \leq c_0 \norm{f}_{L^\infty}^2 \norm{XU(-t)f}_{L^2},
	\\
	\norm{\partial_x (|f|^2f)}_{L^2} 
	& \leq c_0 \norm{f}_{L^\infty}^2 \norm{\partial_x f}_{L^2}.
\end{align*}
\end{lemma}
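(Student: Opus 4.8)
The plan is to establish the two estimates by treating them separately, since the second is essentially the standard one-dimensional calculation carried out component-wise, while the first requires exploiting the specific structure of $U(-t)$ and the nonlinearity.

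\textbf{The second estimate.} For a fixed $j$, write $N_j = |f_j|^2 f_j$. Since $\Sigma_c(\mathcal{G}) \subset H^1(\mathcal{G})$ and $H^1(e_j) \hookrightarrow L^\infty(e_j)$, each $f_j$ is a continuous bounded function on the half-line. A direct computation gives $\partial_x N_j = 2|f_j|^2 \partial_x f_j + f_j^2 \overline{\partial_x f_j}$ (interpreting $|f_j|^2 = f_j \overline{f_j}$), so pointwise $|\partial_x N_j| \le 3 |f_j|^2 |\partial_x f_j|$. Taking $L^2(e_j)$ norms, using $\||f_j|^2 \partial_x f_j\|_{L^2(e_j)} \le \|f_j\|_{L^\infty(e_j)}^2 \|\partial_x f_j\|_{L^2(e_j)}$, and then summing the squares over $j$ and using $\|f\|_{L^\infty(e_j)} \le \|f\|_{L^\infty(\mathcal{G})}$ componentwise yields the claim with $c_0 = 3$. (One should be slightly careful that $N_j \in H^1(e_j)$, which follows from $f_j \in H^1(e_j) \cap L^\infty(e_j)$ by the algebra/product rule for $H^1$ on an interval.)

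\textbf{The first estimate.} The key observation is the commutator identity $X U(-t) = U(-t)(X + 2it\partial_x)$, which is the star-graph analogue of the standard Galilei-type relation and follows from differentiating $e^{-it\Delta_K}$ (it holds in the appropriate domain, and $\Sigma_c$ is mapped into a suitable domain by the flow; this is where the continuity subscript $c$ and the boundary condition matter, since one needs $U(-t)$ to preserve the relevant domain and $X + 2it\partial_x$ to make sense). Applying this with $g = |f|^2 f$,
\begin{align*}
\|X U(-t)(|f|^2 f)\|_{L^2} = \|(X + 2it\partial_x)(|f|^2 f)\|_{L^2} \le \||X|\,|f|^2 f\|_{L^2} + 2t\|\partial_x(|f|^2 f)\|_{L^2}.
\end{align*}
Wait — the right-hand side we want involves $\|X U(-t) f\|_{L^2}$, not $\|Xf\|_{L^2} + t\|\partial_x f\|_{L^2}$; so the cleaner route is to first note $X(|f|^2 f) = |f|^2 (Xf)$ componentwise (since multiplication by $x$ acts componentwise and commutes with the pointwise nonlinear factor), and similarly, using the commutator identity in reverse, $(X + 2it\partial_x)(|f|^2 f)$ should be compared with $|f|^2 \cdot (X + 2it\partial_x) f$ plus an error. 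Concretely, expanding $\partial_x(|f|^2 f) = 2|f|^2 \partial_x f + f^2 \overline{\partial_x f}$ and regrouping, one checks the pointwise identity
\begin{align*}
(X + 2it\partial_x)(|f|^2 f) = |f|^2 (X + 2it\partial_x) f + 2it\l( |f|^2 \partial_x f + f^2 \overline{\partial_x f} \r) - 2it \cdot 2 \re(f\overline{\partial_x f}) \cdot f,
\end{align*}
and the bracketed extra terms are again controlled pointwise by $\|f\|_{L^\infty}^2$ times first derivatives of $f$; then applying the commutator identity once more to rewrite $2it\partial_x f$-type quantities back in terms of $X U(-t) f$. Assembling: $\|X U(-t)(|f|^2 f)\|_{L^2} \le \|f\|_{L^\infty}^2 \|(X + 2it\partial_x) f\|_{L^2} = \|f\|_{L^\infty}^2 \|X U(-t) f\|_{L^2}$ up to harmless constants, with the componentwise bound on $\|f\|_{L^\infty(e_j)}$ used as before.

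\textbf{Main obstacle.} The delicate point is the rigorous justification of the identity $X U(-t) = U(-t)(X + 2it\partial_x)$ on $\Sigma_c(\mathcal{G})$ — i.e.\ that $U(-t)$ maps $\Sigma_c$ into the domain where both sides are defined, and that the Kirchhoff boundary condition is compatible with the operator $X + 2it\partial_x$ (the first-order operator $\partial_x$ does not respect the Kirchhoff condition by itself, so one must argue at the level of the full combination $X + 2it\partial_x$ acting on $U(-t)$-images, or equivalently work with $U(-t) X U(t)$ as a closed operator). This is standard on $\mathbb{R}$ but needs a short verification here; I expect it to be relegated to the Appendix together with the factorization formula $U(t) = \mathcal{MDFM}$, from which the commutator identity can alternatively be read off since $X$ intertwines with $\partial_\xi$ under $\mathcal{F}$ only up to the co-Fourier correction (Lemma~\ref{lem4.4}), so some care with $\mathcal{F}$ versus $\mathcal{F}_c$ is needed and the continuity hypothesis is exactly what makes Lemma~\ref{lem4.2} available.
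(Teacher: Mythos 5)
Your treatment of the second inequality is fine and is exactly what the paper does (Leibniz rule, componentwise). The first inequality is where your argument has genuine gaps, in two places. The operator identity $XU(-t)=U(-t)(X+2it\partial_x)$ that you take as the starting point is not valid on the star graph: it would require $\partial_x$ to commute with $U(t)$, but $\partial_x$ destroys the Kirchhoff condition (if the $f_j(0)$ are all equal and $\sum_j f_j'(0+)=0$, the values $f_j'(0+)$ need not be equal), so $U(t)\partial_x U(-t)\neq \partial_x$. What is actually true, and what the paper proves via the factorization $U(-t)=\mathcal{M}^{-1}\mathcal{F}^{-1}\mathcal{D}^{-1}\mathcal{M}^{-1}$ together with Lemma \ref{lem4.2}, is $XU(-t)g = i\,\mathcal{M}^{-1}\mathcal{F}_{c}^{-1}\partial_x(\mathcal{D}^{-1}\mathcal{M}^{-1}g)$ for $g\in\Sigma_{c}$ --- note the co-Fourier transform $\mathcal{F}_{c}^{-1}$ rather than $\mathcal{F}^{-1}$, so one only obtains the norm identity $\|XU(-t)g\|_{L^2}=2|t|\,\|\partial_x(\mathcal{M}^{-1}g)\|_{L^2}=\|(X+2it\partial_x)g\|_{L^2}$, not your operator identity. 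You do flag the $\mathcal{F}$-versus-$\mathcal{F}_{c}$ issue at the end, and since both are unitary this part is repairable for an $L^2$ bound; the continuity hypothesis enters precisely to make Lemma \ref{lem4.2} applicable to $\mathcal{D}^{-1}\mathcal{M}^{-1}(|f|^2f)$, which is continuous at the vertex because the cubic nonlinearity preserves continuity.

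The more serious gap is your pointwise identity. After your two extra terms cancel (and they do cancel, since $|f|^2\partial_x f+f^2\overline{\partial_x f}=2\re(f\overline{\partial_x f})\,f$), what you have written amounts to $(X+2it\partial_x)(|f|^2f)=|f|^2(X+2it\partial_x)f$, which is false; the correct identity carries the leftover term $4it\,\re(f\overline{\partial_x f})\,f$. This term cannot be recycled ``back in terms of $XU(-t)f$'' as you propose, because $t\partial_x f$ alone (or $Xf$ alone) is not controlled by $\|XU(-t)f\|_{L^2}$ --- only the full combination $X+2it\partial_x$ is. The missing idea is the gauge invariance of the cubic nonlinearity, which is the heart of the paper's proof: setting $g=\mathcal{M}^{-1}f$ one has $\mathcal{M}^{-1}(|f|^2f)=|g|^2g$, the operator $X+2it\partial_x$ conjugates under $\mathcal{M}$ to the pure derivative $2it\partial_x$, and the ordinary Leibniz bound $|\partial_x(|g|^2g)|\le 3|g|^2|\partial_x g|$ --- in which the conjugate-derivative term $g^2\overline{\partial_x g}$ is harmless because it has the same modulus as $|g|^2\partial_x g$ --- closes the estimate with $\|g\|_{L^\infty}=\|f\|_{L^\infty}$ and $2|t|\,\|\partial_x g\|_{L^2}=\|XU(-t)f\|_{L^2}$. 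Without this gauge transformation the conjugate-derivative term is exactly where the argument breaks, since $\overline{(X+2it\partial_x)f}=(X-2it\partial_x)\bar f$ carries the wrong sign and its norm is not controlled.
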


The Sobolev inequality holds on the star graph without the continuity at the origin. 
\begin{lemma}[Sobolev embedding]
\label{sob:1}
Let $f \in H^1(\cG)$. We have
\begin{align*}
	\| f \|_{L^{p} (\mathcal{G})} \cleq  \| f \|_{H^1(\mathcal{G})}
\end{align*}
for $2 \leq p \leq \infty$.%, where $2/p=0$ if $p=\infty$. 
\end{lemma}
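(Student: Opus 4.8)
The statement is the Sobolev embedding $H^1(\cG) \hookrightarrow L^p(\cG)$ for $2 \le p \le \infty$, \emph{without} assuming continuity at the vertex. The plan is to reduce everything to the classical one-dimensional Sobolev inequality on the half-line $(0,\infty)$, applied component by component, and then to reassemble the pieces using the definitions of the norms on $\cG$. Since $\Delta_K$ and the continuity condition play no role here (the claim is purely about the function space $H^1(\cG) = \bigoplus_j H^1(e_j)$), I do not need Lemma \ref{lem4.2} or anything about the vertex: the whole point of the lemma is that the embedding holds freely, edge by edge.

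First I would recall the one-dimensional half-line estimate. For a single $g \in H^1(0,\infty)$ one has $\|g\|_{L^\infty(0,\infty)} \cleq \|g\|_{H^1(0,\infty)}$; indeed writing $|g(x)|^2 = -\int_x^\infty \partial_y |g(y)|^2\,dy = -2\re\int_x^\infty \overline{g(y)}\,g'(y)\,dy$, the Cauchy--Schwarz inequality gives $|g(x)|^2 \le 2\|g\|_{L^2(0,\infty)}\|g'\|_{L^2(0,\infty)} \le \|g\|_{H^1(0,\infty)}^2$, so that $\|g\|_{L^\infty(0,\infty)} \le \|g\|_{H^1(0,\infty)}$. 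This settles the endpoint $p = \infty$ for each component. The case $p = 2$ is immediate from $\|g\|_{L^2} \le \|g\|_{H^1}$. For the intermediate range $2 < p < \infty$ I would interpolate: $\|g\|_{L^p(e_j)} \le \|g\|_{L^2(e_j)}^{2/p}\|g\|_{L^\infty(e_j)}^{1-2/p} \cleq \|g\|_{H^1(e_j)}$, using the two endpoint bounds just established.

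Next I would assemble the components. Fix $f = (f_1,\dots,f_n)^T \in H^1(\cG)$, so each $f_j \in H^1(e_j)$. For $p = \infty$, by definition $\|f\|_{L^\infty(\cG)} = \max_{1\le j \le n}\|f_j\|_{L^\infty(e_j)} \cleq \max_j \|f_j\|_{H^1(e_j)} \le \big(\sum_{j=1}^n \|f_j\|_{H^1(e_j)}^2\big)^{1/2} = \|f\|_{H^1(\cG)}$. For $2 \le p < \infty$, using the component bound $\|f_j\|_{L^p(e_j)} \cleq \|f_j\|_{H^1(e_j)}$ together with the elementary $\ell^p$--$\ell^2$ comparison $\big(\sum_j a_j^p\big)^{1/p} \le \big(\sum_j a_j^2\big)^{1/2}$ for $p \ge 2$ and $a_j \ge 0$, I obtain
\begin{align*}
\|f\|_{L^p(\cG)} = \Big(\sum_{j=1}^n \|f_j\|_{L^p(e_j)}^p\Big)^{1/p} \cleq \Big(\sum_{j=1}^n \|f_j\|_{H^1(e_j)}^p\Big)^{1/p} \le \Big(\sum_{j=1}^n \|f_j\|_{H^1(e_j)}^2\Big)^{1/2} = \|f\|_{H^1(\cG)}.
\end{align*}
This completes all cases, with an implied constant depending only on the classical one-dimensional constant (in fact one can take it to be $1$).

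\textbf{Main obstacle.} There is essentially no hard analytic step here: the only mild subtlety is bookkeeping with the mismatched exponents in the two norms. The $L^p(\cG)$ norm sums the $p$-th powers of the component norms, while the $H^1(\cG)$ norm sums squares, so one must be careful to pass from an $\ell^p$ sum to an $\ell^2$ sum in the right direction; this is exactly where the hypothesis $p \ge 2$ is used, via $\|\cdot\|_{\ell^p} \le \|\cdot\|_{\ell^2}$. I expect the referee-facing part of the argument to be entirely this reassembly, together with a clear statement that the continuity condition at the vertex is not needed because the space $H^1(\cG)$ decouples across edges.
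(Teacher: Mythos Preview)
Your proof is correct and follows essentially the same route as the paper: both establish the half-line $L^\infty$ bound via $|f_j(x)|^2 = -\int_x^\infty \partial_y |f_j(y)|^2\,dy$ and Cauchy--Schwarz, then interpolate between $L^2$ and $L^\infty$ componentwise. The only cosmetic difference is in the reassembly for $2\le p<\infty$: the paper bounds each $\|f_j\|_{L^\infty(e_j)}$ directly by the full graph norm $\|f\|_{H^1(\cG)}$ and then sums $\sum_j \|f_j\|_{L^2}^2 = \|f\|_{L^2(\cG)}^2$, whereas you keep the component bound $\|f_j\|_{L^p(e_j)}\cleq\|f_j\|_{H^1(e_j)}$ and invoke the $\ell^p\le\ell^2$ inequality; both are equally valid and yield the same result.
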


By \cite{GrIg}, we have the dispersive estimates and the Strichartz estimates of the propagator $U(t)$. 
We say that $(q,r)$ is an admissible pair if it satisfies $2\leq q,r\leq \infty$ and 
\begin{align*}
	\frac{1}{q} = \frac{1}{2} \left(\frac{1}{2} - \frac{1}{r}\right).
\end{align*}

\begin{lemma}[\cite{GrIg}]
\label{lem4.8}
The following dispersive estimate holds.
\begin{align*}
	\| U(t) f \|_{L^{p'}(\mathcal{G})} \cleq |t|^{\frac{1}{2} -\frac{1}{p}} \| f \|_{L^p(\mathcal{G})},
\end{align*}
where $p \in [1,2]$ and $p'$ is the H\"{o}lder conjugate of $p$.  
Moreover, we have the Strichartz estimates:
\begin{align*}
	&\| U(t) f \|_{L^q(\mathbb{R};L^r(\mathcal{G}))} \cleq \| f \|_{L^2(\mathcal{G})},
	\\
	&\left\| \int_{0}^{t} U(t-s)F(s)ds  \right\|_{L^q(\mathbb{R};L^r(\mathcal{G}))} \cleq \| F \|_{L^{\tilde{q}'}(\mathbb{R};L^{\tilde{r}'}(\mathcal{G}))},
\end{align*}
where $(q,r)$ and $(\tilde{q},\tilde{r})$ are admissible pairs and $q,r,\tilde{q}',\tilde{r}'$ are the H\"{o}lder conjugate of $q,r,\tilde{q},\tilde{r}$, respectively. 
\end{lemma}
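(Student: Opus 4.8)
The plan is to deduce both the dispersive and the Strichartz estimates from a single endpoint bound, the $L^1$--$L^\infty$ dispersive estimate $\|U(t)f\|_{L^\infty(\mathcal G)}\cleq |t|^{-1/2}\|f\|_{L^1(\mathcal G)}$, and then feed it into interpolation and the abstract Strichartz machinery; by taking adjoints (using $U(t)^\ast=U(-t)$ and the self-duality of the $L^1$--$L^\infty$ bound) it suffices to treat $t>0$. I would prove this endpoint bound directly from the factorization formula $U(t)=\mathcal{MDFM}$ of Proposition \ref{prop4.1}. The operator $\mathcal M=M(t)I_n$ is multiplication by the unimodular factor $e^{i|x|^2/(4t)}$, hence an isometry on every $L^p(\mathcal G)$, so it leaves both the $L^1$ input and the $L^\infty$ output unchanged; the dilation $\mathcal D=D(t)I_n$ scales the $L^\infty$ norm by exactly $(2|t|)^{-1/2}$; and $\mathcal F$ sends $L^1$ into $L^\infty$ by the Hausdorff--Young inequality of Lemma \ref{HY} with $p=\infty$. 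Chaining these, $\|U(t)f\|_{L^\infty}=(2|t|)^{-1/2}\|\mathcal F(\mathcal M f)\|_{L^\infty}\cleq |t|^{-1/2}\|\mathcal M f\|_{L^1}=|t|^{-1/2}\|f\|_{L^1}$, which is the claim.

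To obtain the full range $p\in[1,2]$ I would interpolate this endpoint estimate with the conservation identity $\|U(t)f\|_{L^2}=\|f\|_{L^2}$, which is the unitarity of $U(t)$ (equivalently, the $L^2$-isometry of each of $\mathcal M$, $\mathcal D$, and $\mathcal F$, the last being Lemma \ref{lem4.1}). Since $\mathcal G$ is a $\sigma$-finite measure space, namely a finite disjoint union of half-lines, the Riesz--Thorin theorem applies and yields $\|U(t)f\|_{L^{p'}}\cleq |t|^{1/2-1/p}\|f\|_{L^p}$ with interpolation parameter $\theta=2/p-1$, which is precisely the stated decay.

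Finally, the Strichartz estimates follow from the abstract theorem of Keel--Tao, whose two hypotheses are exactly what the previous steps produce: the uniform $L^2$ bound and the decay estimate $\|U(t)U(s)^\ast f\|_{L^\infty}=\|U(t-s)f\|_{L^\infty}\cleq |t-s|^{-1/2}\|f\|_{L^1}$, with decay exponent $\sigma=1/2$. For this $\sigma$ the sharp admissibility relation reads $1/q=\tfrac12(1/2-1/r)$, matching the definition in the statement; and because $\sigma=1/2<1$ forces $q>2$, the forbidden double endpoint of Keel--Tao is never reached, so both the homogeneous bound and the retarded inhomogeneous bound for $\int_0^t U(t-s)F(s)\,ds$ hold for all admissible pairs. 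The component-wise definition of $L^r(\mathcal G)$ coincides with the ordinary $L^r$ norm on the disjoint union, so the abstract theorem applies without modification and no $n$-dependent loss arises.

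I expect the genuine obstacle to be the endpoint dispersive bound, since it is the only step that uses the spectral geometry of $\Delta_K$: everything hinges on the correctness of the factorization formula and on the Hausdorff--Young inequality for the graph Fourier transform $\mathcal F$, whose matrix form mixing $\mathcal F^{\pm}$ through $I_n$ and $J_n$ must be controlled so as to yield a scalar-type constant. Once the $|t|^{-1/2}$ bound is in hand, the interpolation and the Keel--Tao argument are routine.
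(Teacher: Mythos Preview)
The paper does not supply its own proof of this lemma: it is simply quoted from \cite{GrIg}, and no argument for it appears either in Section~3 or in Appendix~\ref{appA}. So there is no ``paper's proof'' to compare yours against directly.

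That said, your argument is correct and self-contained within the paper's toolbox. The only comment worth making is that the paper (in the course of proving Proposition~\ref{prop4.1}) records the explicit kernel representation \eqref{eqA.1}, namely $U(t)=(\mathcal U_t^--\mathcal U_t^+)I_n+\tfrac{2}{n}\mathcal U_t^+J_n$ with $[\mathcal U_t^\pm f](x)=(4\pi it)^{-1/2}\int_0^\infty e^{i|x\pm y|^2/(4t)}f(y)\,dy$. From this the $L^1\to L^\infty$ bound is immediate by inspection of the kernel, and this is presumably the route taken in \cite{GrIg}. Your detour through the factorization $U(t)=\mathcal{MDFM}$ and the Hausdorff--Young inequality (Lemma~\ref{HY}) is logically equivalent---indeed, Lemma~\ref{HY} is itself proved by the $L^1\to L^\infty$ bound for $\mathcal F^\pm$, which is the same pointwise kernel estimate---but it has the virtue of making the parallel with the Euclidean case transparent and of reusing results the paper already establishes. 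Once the endpoint dispersive bound is in hand, your interpolation and Keel--Tao steps are standard and correct; in particular your observation that $\sigma=1/2$ forces $q>2$ so that no endpoint issue arises is exactly right.
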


\section{Proof of the Main results}

\subsection{Final state problem}
In this section, we consider the final state problem. Namely, we will show Theorem \ref{thm:FS}. 
We define the function space $\mathscr{X}_{\rho}$ by 
\begin{align*}
	\mathscr{X}_{\rho}:=\{ f\in C([T,\infty);L^2(\mathcal{G})): \norm{f}_{\mathscr{X}}<\rho\}
%	\mathscr{X}=\{ f\in C([T,\infty);L^2(\mathcal{G})): \norm{f}_{\mathscr{X}}<\infty\},
\end{align*}
for $\rho >0$, where the norm is defined by 
\begin{align*}
	\norm{f}_{\mathscr{X}}&=\sup_{t \in [T,\infty)} t^\alpha \norm{f(t)}_{\mathscr{Y}(t)},
	\\
	\norm{f(t)}_{\mathscr{Y}(t)}&=\norm{f(t)}_{L^2(\mathcal{G})} 
	+ \left(\int_{t}^{\infty} \norm{f(s)}_{L^\infty(\mathcal{G})}^4 ds\right)^{\frac{1}{4}},
\end{align*}
where $1/4 < \alpha < 1/2$. 
%For $\rho>0$, we define 
%\begin{align*}
%	\mathscr{X}_{\rho}:=\{ f\in C([T,\infty);L^2(\mathcal{G})): \norm{f}_{\mathscr{X}}<\rho\}
%\end{align*}. 
%The evolution operator is denoted by $U(t):=e^{it\Delta_{K}}$. 
%Then, $U(t)$ has a factorization $U(t)=\mathcal{MDFM}$ as stated another paper. 
%We use the notations (see the other paper). $\mathcal{M}-1$ means 
%$\mathcal{M}-I$, where $I$ is the identity matrix.
We define a function $w$ on $\mathcal{G}$ for a given final data $\varphi \in H^{0,1}(\mathcal{G})$ by 
\begin{align*}
	(w(t))_j=\varphi_j \exp\left(i \frac{\lambda}{2} |\varphi_j|^2 \log t\right) \text{ for } j=1,...,n
\end{align*}
and we set $u_{ap}(t):=\mathcal{MD}w(t)$.
We will find the solution of the integral equation
\begin{align}
\label{eqIE}
	u&=u_{ap}+ i\int_{t}^{\infty} U(t-\tau)\{ N(u) -  N(u_{ap})\} d\tau 
	\\ \notag
	&\qquad -i\int_{t}^{\infty} (2\tau)^{-1}U(t-\tau)R_{N(w)} d\tau 
	  + R_{w}=:\Phi_{u_{ap}}(u)=\Phi(u). 
%	 \\
%	 &=:\Phi_{u_{ap}}(u).
\end{align}
where $N(u):=-\lambda |u|^2u$ and $R_{f}:=\mathcal{MDF}(\mathcal{M}-1)\mathcal{F}^{-1} f$ for a function $f$. 
To find the solution, 
%For the function $v$ satisfying $v-u_{ap} \in \mathscr{X}_{\rho}$, 
%we set the functional 
%\begin{align*}
%	\Phi(v)=\Phi_{u_{ap}}(v)&:=u_{ap}+ i\int_{t}^{\infty} U(t-\tau)\{ N(v) -  N(u_{ap})\} d\tau 
%	\\ \notag
%	&\qquad -i\int_{t}^{\infty} (2\tau)^{-1}U(t-\tau)R_{N(w)} d\tau 
%	  + R_{w},
%\end{align*}
%for the function $v$ satisfying $v-u_{ap} \in \mathscr{X}_{\rho}$. 
it is enough to show the functional $\Phi$ is a contraction mapping on $\mathscr{X}_{\rho}$. 
%find the solution $v$ satisfying $v - u_{ap} \in \mathscr{X}_{\rho}$ to the integral equation $v = \Phi(v)$ by a contraction argument. 

\begin{remark}
Before starting the contraction argument, we give a rough sketch of the derivation of the functional \eqref{eqIE}. %$v = \Phi(v)$. 
%We denote the nonlinearity by $N(u):=-\lambda |u|^2u$. We recall that the $j$-th component of the nonlinearity is give by $(N(u))_j=-\lambda |u_j|^2 u_j$.  
Now, by $\Delta_K U(t)= U(t) \Delta_K$ and the differential equation \eqref{eq:NLS}, we have 
\begin{align}
\label{eq2.1}
	i\partial_t (\mathcal{F}U(-t)u(t))
%	&=i\mathcal{F}(-i\Delta_{K})U(-t)u(t)+i\mathcal{F}U(-t)\partial_t u
%	\\ \notag
%	&=i\mathcal{F}(-i\Delta_{K})U(-t)u(t)+\mathcal{F}U(-t)(-\Delta_{K}u+N(u))
%	\\ \notag
	&=\mathcal{F}U(-t)N(u).
\end{align}
By the definition of $w$, we also have 
%\begin{align*}
%	i \partial_t w_j(t)
%	&= i \varphi_j  \exp(i\lambda |\varphi_j|^2 \log t) \frac{i\lambda |\varphi_j|^2}{t}
%	\\ \notag
%	&=- \lambda t^{-1} |\varphi_j|^2 \varphi_j  \exp(i\lambda |\varphi_j|^2 \log t)
%	\\ \notag
%	&= -\lambda t^{-1} |\varphi_j|^2 w_j 
%	\\ \notag
%	&=- \lambda t^{-1} |w_j|^2 w_j
%	\\ \notag
%	&=t^{-1}(N(w))_{j},
%\end{align*}
%that is, 
\begin{align}
\label{eq2.2}
	i \partial_t w(t) = (2t)^{-1}N(w),
\end{align}
where we note that the $j$-th component of the nonlinearity $N$ is $(N(w))_{j}=-\lambda |w_j|^2 w_j$. 
%By the factorization formula $U(t)=\mathcal{MDFM}$, we have 
%\begin{align}
%\label{eq2.3}
%	\mathcal{F}U(-t) u(t) -w(t)
%	&=\mathcal{F}U(-t) (u(t) - U(t)\mathcal{F}^{-1} w(t))
%	\\ \notag
%	&=\mathcal{F}U(-t) \{u(t) - \mathcal{MDFM}\mathcal{F}^{-1} w(t)\}
%	\\ \notag
%	&=\mathcal{F}U(-t) \{ u(t) - \mathcal{MD}w(t)-  \mathcal{MDF}(\mathcal{M}-1)\mathcal{F}^{-1} w(t)\}
%	\\ \notag
%	&=\mathcal{F}U(-t) \{ u(t) - \mathcal{MD}w(t)\} -  \mathcal{F}U(-t) R_{w},
%\end{align}
By \eqref{eq2.1}, \eqref{eq2.2} and the factorization formula $U(t)=\mathcal{MDFM}$, we have %the time derivative of \eqref{eq2.4} is given by
\begin{align}
\label{eq2.4}
	i \partial_t (\mathcal{F}U(-t) u(t) -w(t))
%	&= (\mathcal{F}U(-t)N(u) -   t^{-1} N(w))
%	\\ \notag
%	&=\mathcal{F}U(-t)\{ N(u) - U(t)\mathcal{F}^{-1}  t^{-1} N(w)\}
%	\\ \notag
%	&=\mathcal{F}U(-t)\{ N(u) - \mathcal{MDFM}\mathcal{F}^{-1}  t^{-1} N(w)\}
%	\\ \notag
%	&=\mathcal{F}U(-t)\{ N(u) - \mathcal{MD} t^{-1} N(w) - \mathcal{MDF}(\mathcal{M}-1)\mathcal{F}^{-1}  t^{-1} N(w) \}
%	\\ \notag
	&=\mathcal{F}U(-t)\{ N(u) - \mathcal{MD} (2t)^{-1} N(w)\} 
	\\ \notag
	& \qquad -  (2t)^{-1}\mathcal{F}U(-t)R_{N(w)}.
\end{align}
%where we set the remainder term by $R_{\varphi}:=\mathcal{MDF}(\mathcal{M}-1)\mathcal{F}^{-1} \varphi$. 
Now, by the vector formulation, we have $(\mathcal{MD}  (2t)^{-1} N(w))_j=  (2t)^{-1} MD (N(w))_j$ and, by the gauge invariance, we obtain $ (2t)^{-1}MD|w_j|^2w_j = |MDw_j|^2MDw_j$.
%\begin{align*}
%	t^{-1}MD|w_j|^2w_j = |MDw_j|^2MDw_j.
%\end{align*}
Therefore, we have
\begin{align}
\label{eq5.5}
	\mathcal{MD}  (2t)^{-1} N(w) = N(\mathcal{MD}w).
\end{align}
Moreover, by the factorization formula, we also have
\begin{align}
\label{eq5.6}
	\mathcal{F}U(-t) u(t) -w(t) = \mathcal{F}U(-t) \{ u(t) - \mathcal{MD}w(t)\} -  \mathcal{F}U(-t) R_{w},
\end{align}
Combining \eqref{eq2.4}, \eqref{eq5.5}, and \eqref{eq5.6}, we obtain
\begin{align}
\label{eq5.6.0}
	i \partial_t \{\mathcal{F}U(-t) (u(t) -\mathcal{MD}w(t))\}
	&=\mathcal{F}U(-t)\{ N(u) -  N(\mathcal{MD}w)\}
	\\ \notag
	&\quad  -  (2t)^{-1}\mathcal{F}U(-t)R_{N(w)}  +i  \partial_t (\mathcal{F}U(-t) R_{w}).
\end{align}
We may assume that $u(t) -u_{ap}(t)$ will be $0$ at infinite time from the final state condition and $\mathcal{F}U(-t) R_{w}$ is a remainder term from Lemma \ref{lem4.5}. 
Therefore, integrating \eqref{eq5.6.0} on $[t,\infty)$ and recalling $u_{ap}=\mathcal{MD}w$, we obtain
%\begin{align*}
%	-i (\mathcal{F}U(-t) u(t) -w(t))
%	=\int_{t}^{\infty} \mathcal{F}U(-\tau)\{ N(u) -  N(\mathcal{MD}w)\} d\tau 
%	 -\int_{t}^{\infty} \tau^{-1}\mathcal{F}U(-\tau)R_{N(w)} d\tau.
%\end{align*}
%Multiplying $i$, we get
\begin{align}
\label{eq2.5}
	\mathcal{F}U(-t) (u(t) -u_{ap}(t))
	&=i\int_{t}^{\infty} \mathcal{F}U(-\tau)\{ N(u) -  N(u_{ap})\} d\tau 
	\\ \notag
	&\quad  -i\int_{t}^{\infty} (2\tau)^{-1}\mathcal{F}U(-\tau)R_{N(w)} d\tau - \mathcal{F}U(-t) R_{w}.
\end{align}
Acting $U(t)\mathcal{F}^{-1}$ from the left, we obtain the integral equation $v=\Phi(v)$. 
\end{remark}

We show that $\Phi(v)-u_{ap} \in \mathscr{X}_{\rho}$ provided that $v-u_{ap} \in \mathscr{X}_{\rho}$. 
We set 
\begin{align*}
	K_1&:=\int_{t}^{\infty} U(t-\tau)\{ N(v) -  N(u_{ap})\} d\tau,
	\\
	K_2&:=\int_{t}^{\infty} (2\tau)^{-1}U(t-\tau)R_{N(w)} d\tau.
\end{align*}
By the triangle inequality, it is sufficient to estimate the $\mathscr{X}$-norms of $K_1$, $K_2$, and $R_{w}$. 
First, we estimate $K_1$. 
Now, the difference of the nonlinearity can be written by $N(v) -  N(u_{ap})=N_1(v,u_{ap})+N_2(v,u_{ap})$ such that $|N_1(v,u_{ap})| \cleq |v-u_{ap}||u_{ap}|^2$ and $|N_2(v,u_{ap})| \cleq |v-u_{ap}|^3$. 
Therefore, by the Strichartz estimate (see Lemma \ref{lem4.8}), we obtain
\begin{align*}
	\norm{K_1}_{\mathscr{Y}(t)} 
	&\cleq \norm{\int_{t}^{\infty} U(t-\tau) N_1(v,u_{ap}) d\tau}_{\mathscr{Y}(t)} 
	+  \norm{\int_{t}^{\infty} U(t-\tau) N_2(v,u_{ap}) d\tau}_{\mathscr{Y}(t)}
	\\
	&\cleq \norm{ N_1(v,u_{ap})}_{L^1(t,\infty: L^2(\mathcal{G}))} 
	+ \norm{ N_2(v,u_{ap})}_{L^\frac{4}{3}(t,\infty: L^1(\mathcal{G}))}
	\\
	&\cleq \norm{ |v-u_{ap}||u_{ap}|^2}_{L^1(t,\infty: L^2(\mathcal{G}))} + \norm{|v-u_{ap}|^3}_{L^\frac{4}{3}(t,\infty: L^1(\mathcal{G}))}.
\end{align*}
It holds from the H\"{o}lder inequality, $v-u_{ap} \in \mathscr{X}_{\rho}$, and $\| u_{ap} \|_{L^\infty} = \| \mathcal{D}w \|_{L^\infty} \cleq t^{-1/2} \| w \|_{L^\infty}$ that
\begin{align*}
\norm{ |v-u_{ap}||u_{ap}|^2}_{L^1(t,\infty: L^2(\mathcal{G}))}
%&= \int_t^\infty\norm{|v(\tau)-u_{ap}(\tau)||u_{ap}(\tau)|^2}_{L^2(\cG)}d\tau\\
&\cleq \int_t^\infty\|v(\tau)-u_{ap}(\tau)\|_{L^2(\cG)}
\|u_{ap}(\tau)\|_{L^\infty(\cG)}^2d\tau\\
&\cleq \rho\int_t^\infty \tau^{-1-\alpha}\|w\|_{L^\infty(\cG)}^2d\tau\\
&\cleq \rho\|\varphi\|_{L^\infty(\cG)}^2t^{-\alpha}.
\end{align*}
and 
\begin{align*}
&\||v-u_{ap}|^3\|_{L^\frac{4}{3}(t,\infty:L^1(\cG))}
%&= \left(\int_t^\infty \||v(\tau)-u_{ap}(\tau)|^3\|_{L^1(\cG)}^\frac{4}{3}\right)^\frac{3}{4}\\
\\
&\cleq \left(\int_t^\infty 
\|v(\tau)-u_{ap}(\tau)\|_{L^\infty(\cG)}^{\frac{4}{3}}
\|v(\tau)-u_{ap}(\tau)\|_{L^2(\cG)}^\frac{8}{3}\right)^\frac{3}{4}\\
&\cleq \left(\int_t^\infty\|v(\tau)-u_{ap}(\tau)\|_{L^\infty(\cG)}^4d\tau\right)^\frac{1}{4}
\left(\int_t^\infty\|v(\tau)-u_{ap}(\tau)\|_{L^2(\cG)}^4d\tau\right)^\frac{1}{2}\\
&\cleq \rho t^{-\alpha}\left(\int_t^\infty \rho^4\tau^{-4\alpha} d \tau \right)^{\frac{1}{2}}\\
&\cleq \rho^{3}t^{ \frac{1}{2}-3\alpha }.%{-\alpha + 1 - 4\alpha}.
\end{align*}
Thus we see that 
\begin{equation}
\label{eq:5.8}	
\|K_1\|_{\mathscr{Y}(t)} 
\cleq \rho\|\varphi\|_{L^\infty(\cG)}^2t^{-\alpha}  + \rho^{3}\tau^{ \frac{1}{2}-3\alpha }.%{-\alpha + 1 - 4\alpha}.
\end{equation}
Next, we estimate $R_{w}$. By Lemmas \ref{lem4.5} and \ref{lem4.2}, we have
\begin{align*}
	\norm{R_{w}}_{L^2} 
%	&=  \norm{\mathcal{MDF}(\mathcal{M}-1)\mathcal{F}^{-1} w(t)}_{L^2}
%	\\
	&=  \norm{(\mathcal{M}-1)\mathcal{F}^{-1} w(t)}_{L^2}
	\\
	&\cleq t^{-\frac{1}{2}}  \norm{X\mathcal{F}^{-1} w(t)}_{L^2}
	\\
	&\cleq t^{-\frac{1}{2}}  \norm{\mathcal{F}_{c}^{-1} \partial_x w(t)}_{L^2}
	\\
	&\cleq t^{-\frac{1}{2}}  \norm{\partial_x w(t)}_{L^2}
	\\
	&\cleq t^{-\frac{1}{2}}   \norm{\partial_x \varphi}_{L^2}(1 + \norm{\varphi}_{L^\infty} \log t ).
\end{align*}
%where we used $\norm{\partial_x w}_{ L_x^2}\cleq \norm{\partial_x \varphi}_{L^2} + \norm{\varphi}_{L^\infty}  \norm{\partial_x \varphi}_{L^2} \log t = \norm{\partial_x \varphi}_{L^2}(1 + \norm{\varphi}_{L^\infty} \log t )$ in the last.
It holds from Lemmas \ref{lem4.2} and  \ref{lem4.5} that
\begin{align*}
	\norm{R_{w}}_{L^\infty} 
%	&= \norm{\mathcal{MDF}(\mathcal{M}-1)\mathcal{F}^{-1} w(t)}_{L^\infty}
%	\\
%	&\cleq t^{-\frac{1}{2}} \norm{\mathcal{F}(\mathcal{M}-1)\mathcal{F}^{-1} w(t)}_{L^\infty}
%	\\
	&\cleq t^{-\frac{1}{2}} \norm{(\mathcal{M}-1)\mathcal{F}^{-1} w(t)}_{L^1}
	\\
	&\cleq t^{-\frac{1}{2}} t^{-\frac{1}{4}+\varepsilon} \norm{X^{\frac{1}{2}-2\varepsilon}\mathcal{F}^{-1} w(t)}_{L^1}
	\\
	&\cleq t^{-\frac{3}{4} +\varepsilon} \norm{(1+X)\mathcal{F}^{-1} w(t)}_{L^2}
%	\\
%	&\cleq t^{-\frac{3}{4} +\varepsilon} \norm{\mathcal{F}_{c}^{-1} (1+\partial_x ) w(t)}_{L^2}
%	\\
%	&\cleq t^{-\frac{3}{4} +\varepsilon} \norm{(1+\partial_x ) w(t)}_{L^2}
	\\
	&\cleq t^{-\frac{3}{4} +\varepsilon} (\norm{w(t)}_{L^2}+ \norm{\mathcal{F}_{c}^{-1}\partial_x  w(t)}_{L^2})
	\\
	&\cleq t^{-\frac{3}{4} +\varepsilon} (\norm{\varphi}_{L^2}+ \norm{\partial_x  w(t)}_{L^2}),
\end{align*}
where $\varepsilon>0$ is sufficiently small.  
Thus, we get 
\begin{equation}
	\label{eq:5.9}	
\begin{aligned}
	\left( \int_{t}^{\infty} \norm{R_{w}}_{L^\infty}^4 ds  \right)^{1/4}
	&\cleq  \left(\int_{t}^{\infty} s^{-2-1+4\varepsilon}   (\norm{\varphi}_{L^2}+ \norm{\partial_x  w(s)}_{L^2})^4 ds \right)^{1/4} 
%	\\
%	&\cleq  t^{-\frac{1}{2}+\varepsilon} (\norm{\varphi}_{L^2}+ \norm{\partial_x  w(t)}_{L^\infty (t,\infty:L^2)})
	\\
	&\cleq  t^{-\frac{1}{2}+\varepsilon} \{ \norm{\varphi}_{L^2}+ \norm{\partial_x \varphi}_{L^2}(1 + \norm{\varphi}_{L^\infty} \log t )\}.
\end{aligned}
\end{equation}
%where we used 
%$$
%\norm{\partial_x w}_{L_x^2}\cleq \norm{\partial_x \varphi}_{L^2} + \norm{\varphi}_{L^\infty}  \norm{\partial_x \varphi}_{L^2} \log t = \norm{\partial_x \varphi}_{L^2}(1 + \norm{\varphi}_{L^\infty} \log t )
%$$ 
%in the last.
At last, we estimate $K_2$. By the Strichartz estimates, we get
\begin{equation*}
\begin{aligned}
	\norm{\int_{t}^{\infty} (2\tau)^{-1}U(t-\tau)R_{N(w)} d\tau }_{\mathscr{Y}(t)}
	\cleq \int_{t}^{\infty} \tau^{-1} \norm{R_{N(w)} }_{L^2} d\tau.
\end{aligned}
\end{equation*}
Now, it holds from  Lemmas \ref{lem4.2} and  \ref{lem4.5} that 
\begin{equation}
	\label{eq:5.10}
\begin{aligned}
	\int_{t}^{\infty} \tau^{-1} \norm{R_{N(w)} }_{L^2} d\tau 
%	&=\int_{t}^{\infty} \tau^{-1} \norm{\mathcal{MDF}(\mathcal{M}-1)\mathcal{F}^{-1} f(w) }_{L^2} d\tau 
%	\\
	&= \int_{t}^{\infty} \tau^{-1} \norm{(\mathcal{M}-1)\mathcal{F}^{-1} N(w) }_{L^2} d\tau 
	\\
	&\cleq \int_{t}^{\infty} \tau^{-1-\frac{1}{2}}  \norm{X\mathcal{F}^{-1} N(w) }_{L^2} d\tau 
	\\
	&\cleq  \int_{t}^{\infty} \tau^{-\frac{3}{2}}  \norm{\mathcal{F}_{c}^{-1} \partial_x N(w) }_{L^2} d\tau 
	\\
	&\cleq  \int_{t}^{\infty} \tau^{-\frac{3}{2}}   \norm{\partial_x N(w) }_{L^2} d\tau 
	\\
	&\cleq  \int_{t}^{\infty} \tau^{-\frac{3}{2}}  \norm{w}_{L^\infty}^{2} \norm{\partial_x w}_{L^2} d\tau 
%	\\
%	&\cleq  t^{-\frac{1}{2}} \norm{\varphi}_{L^\infty}^{2} \norm{\partial_x w}_{L_t^\infty(t,\infty: L_x^2)}
	\\
	&\cleq  t^{-\frac{1}{2}} \norm{\varphi}_{L^\infty}^{2} \norm{\partial_x \varphi}_{L^2}(1 + \norm{\varphi}_{L^\infty} \log t ).
\end{aligned}
\end{equation}
%where we used 
%$$
%\norm{\partial_x w}_{L_x^2}\cleq \norm{\partial_x \varphi}_{L^2} + \norm{\varphi}_{L^\infty}  \norm{\partial_x \varphi}_{L^2} \log t = \norm{\partial_x \varphi}_{L^2}(1 + \norm{\varphi}_{L^\infty} \log t )
%$$ 
%in the last. 
%substituting \eqref{eq:5.8}, \eqref{eq:5.9}, and \eqref{eq:5.10} into \eqref{eq:5.7}, 
Therefore, we see that 
\begin{equation*}
	\begin{aligned}
		\|\Phi(v) - u_{ap}\|_{\mathscr{X}} &\cleq \rho\|\varphi\|_{L^\infty(\cG)}^2
+ \rho^{3} T^{\frac{1}{2}-2\alpha} +   
	T^{-\frac{1}{2}+\alpha}   \norm{\partial_x \varphi}_{L^2}(1 + \norm{\varphi}_{L^\infty} \log T )\\
	&\quad + T^{-\frac{1}{2}+\alpha} \norm{\varphi}_{L^\infty}^{2} \norm{\partial_x \varphi}_{L^2}(1 + \norm{\varphi}_{L^\infty} \log T ).
\end{aligned}
\end{equation*}
Choosing  $1/4 < \alpha < 1/2$, $T$ large enough, and 
$\|\varphi\|_{L^\infty}$ sufficiently small, we see that  
the map $\Phi$ is the map onto $\mathscr{X}_\rho$.
In the same manner, we are able to show that $\Phi$ is the contraction map on $\mathscr{X}_\rho$.
The Banach fixed point theorem implies that $\Phi$ has a unique fixed point in $\mathscr{X}_\rho$, 
which is the solution to the final state problem. This completes the proof of Theorem \ref{thm:FS}. 

%%%%%Proof of the initial value problem%%%%
\subsection{Initial value problem}
The strategy relies on the argument of Hayashi and Naumkin \cite{HaNa98}. 

Arguing as in the above, we have the following:
\begin{lemma} \label{iv:0d}
Let $u$, $v \in \Sigma_{c}(\mathcal{G})$. 
Then it holds that
\begin{align*}
	%\norm{J(t)(|u|^2u)}_{L^2} = 
	&{}\norm{XU(-t)\l( |u|^2u - |v|^2v \r)}_{L^2} \\
	&{}\cleq \l( \norm{u}_{L^\infty} + \norm{v}_{L^\infty} \r)\l( \norm{XU(-t)u}_{L^2} + \norm{XU(-t)v}_{L^2} \r) \norm{u-v}_{L^{\infty}} \\
	&\quad + \l( \norm{u}_{L^\infty}^2 + \norm{v}_{L^\infty}^2 \r) \norm{XU(-t) (u-v)}_{L^2}, \\
	&{}\norm{\partial_x (|u|^2u - |v|^2v)}_{L^2} \\
	{}&\cleq \l( \norm{u}_{L^\infty} + \norm{v}_{L^\infty} \r)\l( \norm{\partial_x u}_{L^2} + \norm{\partial_x v}_{L^2} \r) \norm{u-v}_{L^{\infty}} \\
	&\quad + \l( \norm{u}_{L^\infty}^2 + \norm{v}_{L^\infty}^2 \r) \norm{\partial_x (u-v)}_{L^2}.
\end{align*}
\end{lemma}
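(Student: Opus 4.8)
The plan is to exploit the fact that the two estimates are structurally identical: the second (for $\partial_x$) is an elementary Leibniz-rule computation, and the first (for $XU(-t)$) will follow from it by conjugating with the gauge factor $\mathcal{M}$ and using the factorization $U(t)=\mathcal{M}\mathcal{D}\mathcal{F}\mathcal{M}$ of Proposition \ref{prop4.1}, exactly in the spirit of the proof of Lemma \ref{iv:0} (the case $v\equiv 0$). So I would first establish the $\partial_x$-bound, then transfer it.

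\emph{Step 1: the spatial-derivative estimate.} Everything being component-wise on each edge $e_j$, the ordinary product rule gives $\partial_x(|u_j|^2u_j)=2|u_j|^2\partial_x u_j+u_j^2\overline{\partial_x u_j}$, hence
\[
\partial_x(|u_j|^2u_j-|v_j|^2v_j)=2|u_j|^2\partial_x(u_j-v_j)+2(|u_j|^2-|v_j|^2)\partial_x v_j+u_j^2\overline{\partial_x(u_j-v_j)}+(u_j^2-v_j^2)\overline{\partial_x v_j}.
\]
Using $\bigl||u_j|^2-|v_j|^2\bigr|\le(|u_j|+|v_j|)|u_j-v_j|$ and $|u_j^2-v_j^2|\le(|u_j|+|v_j|)|u_j-v_j|$, one reads off the pointwise bound $|\partial_x(|u_j|^2u_j-|v_j|^2v_j)|\le 3|u_j|^2|\partial_x(u_j-v_j)|+3(|u_j|+|v_j|)|u_j-v_j|\,|\partial_x v_j|$. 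Taking $\norm{\cdot}_{L^2(e_j)}$, summing in $j$, bounding the factors $|u_j|$, $|v_j|$ by $\norm{u}_{L^\infty}$, $\norm{v}_{L^\infty}$ and $|u_j-v_j|$ by $\norm{u-v}_{L^\infty}$ (all products lie in $L^2$ since $H^1(\mathcal{G})\hookrightarrow L^\infty(\mathcal{G})$ by Lemma \ref{sob:1}), and finally symmetrizing the roles of $u$ and $v$, gives the second inequality.

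\emph{Step 2: the weighted estimate.} From $U(t)=\mathcal{M}\mathcal{D}\mathcal{F}\mathcal{M}$ one gets $U(-t)=\mathcal{M}^{-1}\mathcal{F}^{-1}\mathcal{D}^{-1}\mathcal{M}^{-1}$. Since $X$ commutes with the multiplication operator $\mathcal{M}^{-1}$, and since $\partial_x\mathcal{D}^{-1}=2t\,\mathcal{D}^{-1}\partial_x$, Lemma \ref{lem4.2} applied to $\mathcal{D}^{-1}\mathcal{M}^{-1}g$ yields $XU(-t)g=2it\,\mathcal{M}^{-1}\mathcal{F}_{c}^{-1}\mathcal{D}^{-1}\partial_x(\mathcal{M}^{-1}g)$, and because $\mathcal{M}^{-1}$, $\mathcal{F}_{c}^{-1}$, $\mathcal{D}^{-1}$ are $L^2$-isometries (Lemma \ref{lem4.1}) we obtain the key identity $\norm{XU(-t)g}_{L^2}=2t\,\norm{\partial_x(\mathcal{M}^{-1}g)}_{L^2}$. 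I would apply this with $g=|u|^2u-|v|^2v$; the gauge invariance $\mathcal{M}^{-1}(|u|^2u)=|\mathcal{M}^{-1}u|^2\,\mathcal{M}^{-1}u$ (valid component-wise since $|\mathcal{M}^{-1}|\equiv1$) lets me write $\partial_x(\mathcal{M}^{-1}g)=\partial_x(|p|^2p-|q|^2q)$ with $p=\mathcal{M}^{-1}u$, $q=\mathcal{M}^{-1}v$, so Step 1 applies. Translating back via $\norm{\mathcal{M}^{-1}h}_{L^\infty}=\norm{h}_{L^\infty}$ and $2t\,\norm{\partial_x\mathcal{M}^{-1}h}_{L^2}=\norm{XU(-t)h}_{L^2}$ (the key identity with $g=h$, applied to $h=u$, $v$, $u-v$) reproduces the first inequality.

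\emph{Main obstacle.} The only genuine care needed is the regularity bookkeeping that legitimizes Lemma \ref{lem4.2}: one must check, as in Lemma \ref{iv:0}, that $|u|^2u-|v|^2v\in\Sigma_{c}(\mathcal{G})$ when $u,v\in\Sigma_{c}(\mathcal{G})$ — in particular that the cubic nonlinearity (and hence its $\mathcal{M}$- and $\mathcal{D}$-conjugates) stays continuous at the vertex and weighted-$L^2$ — so that $\mathcal{D}^{-1}\mathcal{M}^{-1}g\in H^1_{c}(\mathcal{G})$ and all the displayed quantities are finite. Everything else is the elementary algebra of Step 1.
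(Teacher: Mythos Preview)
Your proposal is correct and follows essentially the same route as the paper. The paper proves this lemma by simply writing ``Arguing as in the above'' (i.e.\ as in the proof of Lemma \ref{iv:0}), and your Steps 1--2 are precisely the difference version of that argument: Leibniz rule for the $\partial_x$-estimate, then the conjugation $XU(-t)=\mathcal{M}^{-1}X\mathcal{F}^{-1}\mathcal{D}^{-1}\mathcal{M}^{-1}=i\mathcal{M}^{-1}\mathcal{F}_c^{-1}\partial_x\mathcal{D}^{-1}\mathcal{M}^{-1}$ via Lemma \ref{lem4.2}, gauge invariance of the cubic, and the $L^2$-isometry of $\mathcal{M}^{-1},\mathcal{F}_c^{-1},\mathcal{D}^{-1}$ to reduce the first estimate to the second; your remark that $|u|^2u-|v|^2v\in\Sigma_c(\mathcal{G})$ (needed for Lemma \ref{lem4.2}) is the only point the paper leaves implicit.
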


Let us recall the local existence of solutions to \eqref{eq:NLS}. Fix $t_0 \in \R$. Set the function space
\begin{align*}
	\mathscr{Z}_{T}^{\varepsilon,A} ={}& \{ \varphi \in C(I_T, (\Sigma_{c} \cap L^{\infty})(\cG))\; ;\; \norm{\varphi}_{\mathscr{Z}_T^{\eps, A}} <\infty \} 
\end{align*}
equipped with
\begin{align*}
	\norm{\varphi}_{\mathscr{Z}}
	=\norm{\varphi}_{\mathscr{Z}_T^{\eps, A}} :={}& \sup_{t\in I_T} (1+|t|)^{-A\varepsilon} \l( \norm{\varphi}_{H^{1}}
	+ \norm{U(-t)\varphi}_{H^{0,1}} \r) \\
	&{}+ \sup_{t\in I_T} (1+|t|)^{\frac12} \norm{\varphi}_{L^\infty} 
\end{align*}
for any $\eps >0$ and all $A>0$, where $I_T = [t_0, t_0 + T]$.
%Note that the constant $A$ can be determined by Proposition \ref{iv:pro1}.

\begin{proposition}[Local existence of solutions] \label{iv:1}
Let $\eps >0$, $A>0$, $t_0 \in \R$, and $K>0$. Assume that $u_0 \in H_c^{1}(\cG)$ satisfies $U(-t_0)u_0 \in H^{0,1}(\cG)$ with $\norm{u_0}_{H^{1}} + \norm{U(-t_0) u_0}_{H^{0,1}} \leq K$. Then there exists $T = T(K) >0$ not depending on $\eps$ and $A$ such that \eqref{eq:NLS} has a unique solution $u \in \mathscr{Z}_{T}^{\varepsilon,A}$ with $u(t_0) = u_0$. Moreover the solution satisfies 
\begin{align}
	\norm{u}_{\mathscr{Z}_{T}^{\varepsilon,A}} \le B_0 K \label{iv:2}
\end{align}
for some constants $B_0 >0$ not depending on $t_0$, $T$, $A$ and $\eps$.
\end{proposition}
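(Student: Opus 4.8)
The plan is to establish Proposition \ref{iv:1} by a standard contraction mapping argument applied to the Duhamel formulation of \eqref{eq:NLS} in the complete metric space $\mathscr{Z}_{T}^{\eps,A}$ (more precisely, in a closed ball of it of radius $B_0 K$ centered at $U(t-t_0)u_0$), exploiting that the $\mathscr{Z}$-norm controls exactly the three quantities that propagate well under $U(t)$: the $H^1$-norm, the weighted norm $\norm{U(-t)\varphi}_{H^{0,1}}$, and the time-weighted $L^\infty$-norm. First I would write the map
\begin{align*}
	\Phi(u)(t) := U(t-t_0)u_0 + i\lambda \int_{t_0}^{t} U(t-s)\bigl(|u|^2u\bigr)(s)\, ds
\end{align*}
and note that, since $U(t)$ is unitary on $L^2(\cG)$ and commutes with $\Delta_K$ (hence with $\partial_x^2$ on $\mathscr{D}(\Delta_K)$, in particular preserves $H^m(\cG)$-norms for $m=1,2$), the free term obeys $\norm{U(\cdot-t_0)u_0}_{\mathscr{Z}} \cleq K$; here the $L^\infty$-part uses the dispersive estimate of Lemma \ref{lem4.8} together with the Sobolev embedding $H^1(\cG)\hookrightarrow L^\infty(\cG)$ of Lemma \ref{sob:1} to interpolate between the $L^\infty$-decay for $t$ away from $t_0$ and the $H^1$-bound for $t$ near $t_0$ (the factor $(1+|t|)^{1/2}$ is harmless on the bounded interval $I_T$, with implicit constants allowed to depend on $t_0$ only through the already-absorbed $K$; in fact $B_0$ is claimed independent of $t_0$, so one keeps the homogeneous dispersive bound $\norm{U(t)f}_{L^\infty}\cleq |t|^{-1/2}\norm{f}_{L^1}$ and splits $I_T$ at $|t-t_0|\sim 1$).

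Next I would estimate the Duhamel term. For the $H^1$-part, move $\partial_x$ inside using $\partial_x U = U\partial_x$ and apply the second inequality of Lemma \ref{iv:0} (or Lemma \ref{iv:0d} with $v=0$) to get $\norm{\partial_x(|u|^2u)}_{L^2}\cleq \norm{u}_{L^\infty}^2\norm{\partial_x u}_{L^2}$, then integrate $\int_{t_0}^t \norm{u(s)}_{L^\infty}^2\norm{u(s)}_{H^1}\,ds \cleq T\,\norm{u}_{\mathscr{Z}}^3 (1+|t|)^{3A\eps}$ — note the cubic power produces a gain $T$ and an extra $\eps^2$ smallness from two $L^\infty$-factors, which is what lets the ball of radius $B_0 K$ be stable once $K\sim\eps$ is small; strictly, here the crude bound $\norm{u}_{L^\infty}\cleq \norm{u}_{H^1}\cleq (1+|t|)^{A\eps}\norm{u}_{\mathscr{Z}}$ suffices since $T$ itself supplies the contraction. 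For the weighted part, write $U(-t)\Phi(u)(t) = U(-t_0)u_0 + i\lambda\int_{t_0}^t U(-s)(|u|^2u)(s)\,ds$, apply $X$, and use the first inequality of Lemma \ref{iv:0}, $\norm{XU(-s)(|u|^2u)}_{L^2}\cleq \norm{u(s)}_{L^\infty}^2 \norm{XU(-s)u(s)}_{L^2}$, again integrating over the short interval. The $L^\infty$-part of the Duhamel term is controlled by the inhomogeneous Strichartz estimate of Lemma \ref{lem4.8} with the admissible pair $(q,r)=(4,\infty)$ (or $(8,4)$), bounding $\norm{\int U(t-s)(|u|^2u)\,ds}_{L^\infty_t L^\infty_x(I_T\times\cG)}$ by $\norm{|u|^2u}_{L^{4/3}_t L^1_x}\cleq T^{3/4}\norm{u}_{L^\infty_t L^\infty_x}^2 \norm{u}_{L^\infty_t L^2_x}$, all finite and $O(T^{3/4}\norm{u}_{\mathscr{Z}}^3)$ on $I_T$; alternatively one can run the dispersive estimate directly on the Duhamel integral since $|t-s|\le T$ is bounded.

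Then I would assemble these bounds: choosing $T=T(K)$ small enough (and noting the estimates use only $K$, not $\eps$ or $A$), $\Phi$ maps the closed ball of radius $B_0 K$ into itself, and the analogous differences via Lemma \ref{iv:0d} — which provides exactly the Lipschitz versions of all three estimates, each with a factor $\norm{u-v}$ in the appropriate norm times quadratic smallness or a factor $T$ — show $\Phi$ is a contraction there after possibly shrinking $T$. The Banach fixed point theorem yields the unique solution $u\in\mathscr{Z}_T^{\eps,A}$ with $u(t_0)=u_0$, satisfying \eqref{iv:2}, and the continuity-in-time $u\in C(I_T,(\Sigma_c\cap L^\infty)(\cG))$ follows from the fixed-point identity and strong continuity of $U(t)$; the continuity condition at the vertex (the subscript $c$) is preserved because $U(t)$ maps $H^1_c$ to $H^1_c$ (its generator $\Delta_K$ encodes the Kirchhoff condition, which forces continuity at $0$) and the nonlinearity $|u|^2u$ is computed component-wise, hence preserves continuity at the vertex whenever $u$ is continuous there. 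The main obstacle is bookkeeping: one must verify that all constants can be taken independent of $t_0$, $T$, $A$, $\eps$ (so the homogeneous, rather than inhomogeneous-in-$t_0$, dispersive and Strichartz estimates must be used, and every appearance of $T$ must be an honest gain that can be made $\le 1$), and that the weighted norm really closes — i.e. that Lemma \ref{iv:0} applies, which needs $u(s)\in\Sigma_c(\cG)$ for a.e.\ $s$, so one should set up the iteration starting from the known $H^1$-wellposedness via Strichartz and then bootstrap the weighted and $L^\infty$ bounds, rather than trying to close all norms simultaneously from scratch.
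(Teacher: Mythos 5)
Your overall skeleton (Duhamel map, contraction in a ball of $\mathscr{Z}_{T}^{\eps,A}$, Lemma \ref{iv:0} for the $H^1$ and weighted parts, Lemma \ref{iv:0d} for the Lipschitz estimates, preservation of the vertex continuity by $U(t)$ and the component-wise nonlinearity) is the paper's. But there is a genuine gap in how you close the time-weighted $L^\infty$ component of the norm, and it traces to treating the weights $(1+|t|)^{1/2}$ and $(1+|t|)^{-A\eps}$ as ``harmless on the bounded interval $I_T$.'' They are harmless only when $t_0=0$; the proposition is invoked in the continuation argument with $t_0$ arbitrarily large, and both $T(K)$ and $B_0$ must be uniform in $t_0$. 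Concretely: (i) your Strichartz route does not give what you claim — $(q,r)=(\infty,\infty)$ is not admissible, and with $(4,\infty)$ the inhomogeneous estimate of Lemma \ref{lem4.8} controls only $L^4_tL^\infty_x$, not $L^\infty_tL^\infty_x$; (ii) your fallback, applying the dispersive bound pointwise to $U(t-s)$ inside the Duhamel integral, produces $\norm{\Phi(u)(t)}_{L^\infty}\cleq T^{1/2}M^3$ with \emph{no decay in $|t|$}, so after multiplying by $\sup_{I_T}(1+|t|)^{1/2}\sim (1+|t_0|)^{1/2}$ the bound blows up with $t_0$. The paper's fix is the one you only apply to the free term: factor $U(t-s)=U(t)U(-s)$, put the homogeneous decay $\norm{U(t)f}_{L^\infty}\cleq |t|^{-1/2}\norm{f}_{L^1}$ on the outer propagator applied to the whole integral, and control $\norm{\int_{t_0}^t U(-s)(|u|^2u)\,ds}_{L^1}\cleq\int_{t_0}^t\norm{U(-s)(|u|^2u)}_{H^{0,1}}\,ds$ via Lemma \ref{iv:0}; this is precisely why the hypothesis and the $\mathscr{Z}$-norm carry $U(-t)(\cdot)$ in $H^{0,1}$. (Also, the split for the free term should be at $|t|\sim 1$, not $|t-t_0|\sim 1$.)

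The same defect appears in your $H^1$ and $H^{0,1}$ estimates: replacing $\norm{u(s)}_{L^\infty}$ by $\norm{u(s)}_{H^1}\cleq(1+|s|)^{A\eps}\norm{u}_{\mathscr{Z}}$ yields a Duhamel contribution $\cleq T(1+|t_0|+T)^{3A\eps}$, and after multiplying by $(1+|t|)^{-A\eps}$ a factor $(1+|t_0|)^{2A\eps}$ survives, so neither $T(K)$ nor $B_0$ is uniform in $t_0$. The paper instead uses the $L^\infty$ component of the $\mathscr{Z}$-norm itself, $\norm{u(s)}_{L^\infty}^2\leq M^2(1+|s|)^{-1}$, so the integrand is $(1+|s|)^{-1+A\eps}$ and the resulting quantity $I_{\eps,A}(t_0+T)$ is small for small $T$. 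In short, the three components of the $\mathscr{Z}$-norm must be coupled — the $L^\infty$ decay feeds the $H^1$/$H^{0,1}$ bounds, and the $H^{0,1}$ bound feeds the weighted $L^\infty$ bound through the dispersive estimate — rather than each being closed crudely on its own. With these corrections your argument becomes the paper's proof.
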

\begin{proof}
Set a complete metric space
\[
	\mathscr{Z}_{M}= \mathscr{Z}_{T,M}^{\varepsilon,A}:= \{ f \in L^{\infty}(I_T, (\Sigma_{c} \cap L^{\infty})(\cG))\; ;\; \norm{f}_{\mathscr{Z}} \leq M \}
\]
equipped with the distance function $d(u, v) = \norm{u-v}_{\mathscr{Z}}$,
where the constant $M>0$ will be chosen later.
Let us define the map
\[
	\Psi(u) = U(t-t_0)u_0 - i \lambda \int_{t_0}^t U(t-s) (|u|^2 u)(s)\, ds.
\]
We shall prove that $\Psi(u) \in \mathscr{Z}_{M}$ whenever $u \in \mathscr{Z}_{M}$.
Using Lemma \ref{iv:0}, one easily has 
\begin{align}
\label{eq5.12.0}
	&(1+|t|)^{-A\eps} \l( \norm{\Psi(u(t))}_{H^{1}} + \norm{U(-t)\Psi(u(t))}_{H^{0,1}} \r) \\ \notag
	&\leq (1+|t|)^{-A\eps}\l( \norm{u_0}_{H^{1}} + \norm{U(-t_0)u_0}_{H^{0,1}} \r) \\ \notag
	&\quad + C (1+|t|)^{-A\eps} \int_{t_0}^t \norm{u(s)}_{L^{\infty}}^2\l( \norm{u(s)}_{H^{1}} + \norm{U(-s)u(s)}_{H^{0,1}} \r) ds \\ \notag
	& \leq K + C  \int_{t_0}^t M^3 (1+|s|)^{-1+A\eps} ds \\ \notag
	&= K + C M^3 I_{\varepsilon,A}(t),% \int_{t_0}^t  (1+|s|)^{-1+A\eps} ds
%	\leq{}& \delta + C \eps^{-1} M^3  (1+|t|)^{-A\eps} \l( (1+|t|)^{A\eps} - (1+|t_0|)^{A\eps} \r) \\
%	\leq{}& K + C M^3  (1+|t|)^{-A\eps}  |t-t_0|  \\ \notag
%	\leq{}& K + C M^3 |t-t_0|
\end{align}
for any $t \in I_T$, where $ I_{\varepsilon,A}(t):=\int_{t_0}^t (1+|s|)^{-1+A\eps} ds$. 
Further, similarly to the above, it follows from the dispersive estimate i.e. Lemma \ref{lem4.8},  Lemma \ref{iv:0}, and,  $\| f \|_{L^1} \cleq \| f \|_{H^{0,1}}$ that
\begin{align*}
	 |t|^{\frac12}\norm{\Psi(u(t))}_{L^{\infty}} %\\
	&\leq |t|^{\frac12}\norm{U(t-t_0) u_0}_{L^{\infty}} + C |t|^{\frac12}\norm{U(t) \int_{t_0}^t U(-s) (|u|^2 u)(s)\, ds}_{L^{\infty}} \\
	&\leq C\norm{U(-t_0) u_0}_{L^{1}} + C\norm{\int_{t_0}^t U(-s) (|u|^2 u)(s)\, ds}_{L^{1}} \\
	&\leq C \norm{U(-t_0) u_0}_{H^{0,1}} + C \int_{t_0}^t \norm{u(s)}_{L^{\infty}}^2 \norm{U(-s) u(s)}_{H^{0,1}}\, ds \\
	&\leq C \norm{U(-t_0) u_0}_{H^{0,1}} + C M^3I_{\varepsilon,A}(t). % \int_{t_0}^t (1+|s|)^{-1+A\eps}\, ds \\
%	\leq{}& C \norm{U(-t_0) u_0}_{H^{0,1}} + C\eps^{-1}M^3  \l( (1+|t|)^{A\eps} - (1+|t_0|)^{A\eps} \r)  \\
%	\leq{}& C \norm{U(-t_0) u_0}_{H^{0,1}} + CM^3 |t-t_0|.
\end{align*}
By Proposition \ref{sob:1}, we also estimate
\begin{align*}
	\norm{\Psi(u(t))}_{L^{\infty}} \leq{}& \norm{U(t-t_0) u_0}_{L^{\infty}} + C \norm{U(t) \int_{t_0}^t U(-s) (|u|^2 u)(s)\, ds}_{L^{\infty}} \\
	\leq{}& C\norm{u_0}_{H^{1}} + C \int_{t_0}^t \norm{u(s)}_{L^{\infty}}^2 \norm{u(s)}_{H^{1}}\, ds \\
	\leq{}& C\norm{u_0}_{H^{1}} + C M^3 I_{\varepsilon,A}(t). %\int_{t_0}^t (1+|s|)^{-1+A\eps}\, ds \\
%	\leq{}& C\norm{u_0}_{H^{1,0}} + C\eps^{-1}M^3  \l( (1+|t|)^{A\eps} - (1+|t_0|)^{A\eps} \r)  \\
%	\leq{}& C\norm{u_0}_{H^{1,0}} + CM^3 |t-t_0|.
\end{align*}
These yield
\begin{align}
\label{eq5.13.0}
	&{} \sup_{t \in I_T}(1+|t|)^{\frac12} \norm{\Psi(u(t))}_{L^{\infty}} \leq CK + C M^3 I_{\varepsilon,A}(t_0+T)%T
\end{align}
Hence, by \eqref{eq5.12.0} and \eqref{eq5.13.0}, we obtain
\begin{align*}
	\norm{\Psi(u)}_{\mathscr{Z}_{T}^{\varepsilon,A}} \leq C_1 K + CM^3 I_{\varepsilon,A}(t_0+T) %T 
	\leq C_1 \eps_1 + C K^3 I_{\varepsilon,A}(t_0+T) ,
\end{align*}
when taking $M = 2C_1 K$. 
Here, $I_{\varepsilon,A}(t_0+T) \to 0$ as $T \to 0$ uniformly in $A$ and $\varepsilon$. 
Hence $\Psi(u) \in \mathscr{Z}_{M}$ holds as long as $T = T(K) >0$ satisfies
\[
	C K^3 I_{\varepsilon,A}(t_0+T)  \leq C_1K.
\]
By using Lemma \ref{iv:0d} and similar argument to the above, it is possible to show that $\Psi$ is a contraction mapping on $\mathscr{Z}_{M}$. Therefore, we obtain the solution to $u=\Psi(u)$ by the contraction mapping principle. We can also obtain the continuity in time and the uniqueness of the solution.  We omit the details and complete the proof. 
\end{proof}

\begin{corollary}
\label{cor4.4}
Let $A>0$ and $t_0 \in \R$. Then, there exists $\varepsilon_1>0$ such that the following assertion holds: For $0<\varepsilon \leq \varepsilon_1$ and $u_0 \in H_c^{1}(\cG)$ satisfying $U(-t_0)u_0 \in H^{0,1}(\cG)$ with $\norm{u_0}_{H^{1}} + \norm{U(-t_0) u_0}_{H^{0,1}} \leq \varepsilon$, there exists $T = T(\varepsilon) >0$ not depending on $A$ such that \eqref{eq:NLS} has a unique solution $u \in \mathscr{Z}_{T}^{\varepsilon,A}$ with $u(t_0) = u_0$. Moreover the solution satisfies 
\begin{align}
	\norm{u}_{\mathscr{Z}_{T}^{\varepsilon,A}} < \varepsilon^{\frac{1}{2}}.
\end{align}
\end{corollary}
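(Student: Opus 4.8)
The plan is to obtain this as an immediate specialization of Proposition \ref{iv:1}. Fix $A>0$ and $t_0\in\R$, and apply Proposition \ref{iv:1} with the choice $K=\varepsilon$. This produces a time $T=T(\varepsilon)>0$ that does not depend on $A$, together with a unique solution $u\in\mathscr{Z}_{T}^{\varepsilon,A}$ of \eqref{eq:NLS} with $u(t_0)=u_0$, and by \eqref{iv:2} it satisfies $\norm{u}_{\mathscr{Z}_{T}^{\varepsilon,A}}\le B_0\varepsilon$, where $B_0>0$ is the constant from Proposition \ref{iv:1}, which is independent of $t_0$, $T$, $A$, and $\varepsilon$.

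It then remains only to upgrade the linear bound $B_0\varepsilon$ to $\varepsilon^{1/2}$. Since $B_0$ is a fixed constant here, I would choose $\varepsilon_1>0$ so small that $B_0\sqrt{\varepsilon_1}<1$; for instance $\varepsilon_1:=(2B_0)^{-2}$ works, giving $B_0\sqrt{\varepsilon_1}=1/2$. Then for every $0<\varepsilon\le\varepsilon_1$ we have $B_0\sqrt{\varepsilon}\le B_0\sqrt{\varepsilon_1}<1$, hence
\begin{align*}
	\norm{u}_{\mathscr{Z}_{T}^{\varepsilon,A}}\le B_0\varepsilon=(B_0\sqrt{\varepsilon})\,\sqrt{\varepsilon}<\sqrt{\varepsilon}=\varepsilon^{1/2},
\end{align*}
which is the required strict estimate. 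Because $\varepsilon_1$ depends only on $B_0$, it depends on neither $A$ nor $t_0$, as demanded by the statement.

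One point deserving a brief comment: in Proposition \ref{iv:1} the symbol $\varepsilon$ labels the weight exponent entering the norm of $\mathscr{Z}_{T}^{\varepsilon,A}$, whereas here we additionally use it to measure the size of the data. This causes no difficulty, since Proposition \ref{iv:1} holds for each admissible value of its parameters separately, and taking $K=\varepsilon$ simply ties the two roles together. The uniqueness assertion transfers verbatim as well: the fixed point built in the proof of Proposition \ref{iv:1} lies in $\mathscr{Z}_{M}$ with $M=2C_1K=2C_1\varepsilon\ge B_0\varepsilon$, so the solution just produced coincides with the one asserted there. There is no genuine obstacle in this argument; its entire content is the elementary observation that an $O(\varepsilon)$ bound beats an $\varepsilon^{1/2}$ bound once $\varepsilon$ is small, and the only care needed is to track that $B_0$, and hence $\varepsilon_1$, is independent of $A$ and $t_0$.
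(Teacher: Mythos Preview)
Your proof is correct and follows essentially the same approach as the paper: apply Proposition~\ref{iv:1} with $K=\varepsilon$ and then choose $\varepsilon_1$ so that $B_0<\varepsilon_1^{-1/2}$ (equivalently $B_0\sqrt{\varepsilon_1}<1$), which yields $B_0\varepsilon<\varepsilon^{1/2}$ for all $0<\varepsilon\le\varepsilon_1$. Your additional remarks on the dual role of $\varepsilon$ and on uniqueness are sound but not needed for the argument.
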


\begin{proof}
Taking $\varepsilon_1>0$ such that $B_0 < \varepsilon_1^{-1/2}$, we have $B_0\varepsilon < \varepsilon^{1/2}$ for any $\varepsilon \in (0,\varepsilon_1]$, where $B_0$ is the constant as in Proposition \ref{iv:1}. Applying Proposition \ref{iv:1} as $K=\varepsilon$, we obtain the statement. 
\end{proof}

In what follows, we fix $A =c_0$, where $c_0$ is given in Lemma \ref{iv:0}. 
We show the following estimate.

\begin{proposition}[Bootstrap estimate]
\label{prop4.5}
There exists $\varepsilon_1 >0$ such that the following holds: Let $T>0$.  If $0<\varepsilon\leq \varepsilon_1$, $u_0 \in H_c^{1}(\cG)$ satisfies $U(-t_0)u_0 \in H^{0,1}(\cG)$ with $\norm{u_0}_{H^{1}} + \norm{U(-t_0) u_0}_{H^{0,1}}\leq \varepsilon$, and the solution $u \in \mathscr{Z}_{T}^{\varepsilon,A}$ of \eqref{eq:NLS} on $[0,T)$ with $u(t_0) = u_0$ satisfies 
\begin{align}
\label{eq4.15}
	\norm{u}_{\mathscr{Z}_{T}^{\varepsilon,A}} \leq \varepsilon^{\frac{1}{2}},
\end{align}
then there exists a constant $C_1$ independent of $T$ and $\varepsilon$ such that
\begin{align*}
	\norm{u}_{\mathscr{Z}_{T}^{\varepsilon,A}} \le C_1 \varepsilon.
\end{align*}
\end{proposition}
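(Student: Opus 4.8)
The plan is to estimate separately the two constituents of the $\mathscr{Z}_{T}^{\varepsilon,A}$-norm: the weighted Sobolev quantity $E(t):=\norm{u(t)}_{H^{1}}+\norm{U(-t)u(t)}_{H^{0,1}}$, for which I want $E(t)\cleq\eps(1+|t|)^{A\eps}$, and $\norm{u(t)}_{L^{\infty}}$, for which I want $\cleq\eps(1+|t|)^{-1/2}$. The bound on $E$ comes from a Duhamel/Gr\"onwall argument: starting from $u(t)=U(t-t_{0})u_{0}-i\lambda\int_{t_{0}}^{t}U(t-s)(|u|^{2}u)(s)\,ds$, commuting $\partial_{x}$ through $U$, using $U(-t)U(t-s)=U(-s)$ and the $L^{2}$-unitarity of $U$, and invoking Lemma \ref{iv:0} (with its constant $c_{0}$) together with $\norm{|f|^{2}f}_{L^{2}}\le\norm{f}_{L^{\infty}}^{2}\norm{f}_{L^{2}}$, one gets $E(t)\le E(t_{0})+c_{0}\int_{t_{0}}^{t}\norm{u(s)}_{L^{\infty}}^{2}E(s)\,ds$; feeding in the a priori bound $\norm{u(s)}_{L^{\infty}}\le\eps^{1/2}(1+|s|)^{-1/2}$ coming from \eqref{eq4.15} and $E(t_{0})\le\eps$, Gr\"onwall's inequality yields $E(t)\le\eps\exp\!\big(c_{0}\eps\int_{t_{0}}^{t}(1+|s|)^{-1}\,ds\big)\le\eps(1+|t|)^{c_{0}\eps}=\eps(1+|t|)^{A\eps}$. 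This is exactly why the choice $A=c_{0}$ was made. It then remains to prove the $L^{\infty}$ decay of $u$.

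For that I use the factorization $U(t)=\mathcal{MDFM}$ of Proposition \ref{prop4.1}. Setting $\hat u(t):=\mathcal{F}U(-t)u(t)$, this gives $u(t)=\mathcal{MD}\hat u(t)+R_{\hat u(t)}$ with $R_{f}:=\mathcal{MDF}(\mathcal{M}-1)\mathcal{F}^{-1}f$. Since $\norm{\mathcal{MD}g}_{L^{\infty}}\cleq|t|^{-1/2}\norm{g}_{L^{\infty}}$, and since $R_{\hat u(t)}$ is estimated in $L^{\infty}$ exactly as $R_{w}$ was in \eqref{eq:5.9} — by Lemmas \ref{lem4.5}, \ref{HY}, \ref{lem4.2}, \ref{lem4.4}, using $\mathcal{F}^{-1}\hat u=U(-t)u$, so that $\norm{(1+X)\mathcal{F}^{-1}\hat u}_{L^{2}}\cleq E(t)$ — one obtains, for $|t|\ge1$,
\begin{equation*}
\norm{u(t)}_{L^{\infty}}\cleq|t|^{-1/2}\norm{\hat u(t)}_{L^{\infty}}+|t|^{-3/4+\eps'}E(t),\qquad\eps'>0\text{ small},
\end{equation*}
while for $|t|$ in a bounded set the Sobolev embedding of Lemma \ref{sob:1} already gives $\norm{u(t)}_{L^{\infty}}\cleq\norm{u(t)}_{H^{1}}\le E(t)\cleq\eps$. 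So the task reduces to proving $\norm{\hat u(t)}_{L^{\infty}}\cleq\eps$ for $|t|\ge1$.

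To analyse $\hat u$ I would derive an ODE. By \eqref{eq2.1}, $i\partial_{t}\hat u=\mathcal{F}U(-t)N(u)$ with $N(u)=-\lambda|u|^{2}u$. Writing $N(u)=N(\mathcal{MD}\hat u)+(N(u)-N(\mathcal{MD}\hat u))$ and using the gauge identity $N(\mathcal{MD}\hat u)=(2t)^{-1}\mathcal{MD}N(\hat u)$ (as in \eqref{eq5.5}) together with $\mathcal{F}U(-t)\mathcal{MD}=1+\mathcal{F}(\mathcal{M}^{-1}-1)\mathcal{F}^{-1}$, one arrives at $i\partial_{t}\hat u=(2t)^{-1}N(\hat u)+\mathcal{E}_{1}+\mathcal{E}_{2}$, where $\mathcal{E}_{1}:=(2t)^{-1}\mathcal{F}(\mathcal{M}^{-1}-1)\mathcal{F}^{-1}N(\hat u)$ and $\mathcal{E}_{2}:=\mathcal{F}U(-t)(N(u)-N(\mathcal{MD}\hat u))$. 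Since $\mathcal{F}$ produces functions continuous at the vertex, $\hat u(t)\in\Sigma_{c}(\cG)$ — its $H^{1}$- and $H^{0,1}$-norms being controlled, via Lemmas \ref{lem4.2} and \ref{lem4.4}, by $\norm{\partial_{x}u(t)}_{L^{2}}$ and $\norm{XU(-t)u(t)}_{L^{2}}\le E(t)$ — so those lemmas and Lemma \ref{iv:0} apply to $\hat u$. Componentwise $i\partial_{t}\hat u_{j}=-\lambda(2t)^{-1}|\hat u_{j}|^{2}\hat u_{j}+(\mathcal{E}_{1})_{j}+(\mathcal{E}_{2})_{j}$; multiplying by $\overline{\hat u_{j}}$ and taking real parts makes the resonant cubic term drop out, leaving the pointwise bound $\big|\partial_{t}|\hat u_{j}(t,x)|\big|\cleq\norm{\mathcal{E}_{1}(t)}_{L^{\infty}}+\norm{\mathcal{E}_{2}(t)}_{L^{\infty}}$. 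Integrating from $t=1$ and using $\norm{\hat u(1)}_{L^{\infty}}\cleq\norm{u(1)}_{L^{2}}+\norm{XU(-1)u(1)}_{L^{2}}\le E(1)\cleq\eps$ (Lemma \ref{sob:1} and the first step) reduces the whole problem to an $L^{1}_{t}L^{\infty}_{x}$-bound for $\mathcal{E}_{1}$ and $\mathcal{E}_{2}$.

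The last step is the error estimates. For $\mathcal{E}_{1}$: apply Lemma \ref{HY}, then Lemma \ref{lem4.5} with $\alpha=1/4-\eps'$, then $\norm{X^{1/2-2\eps'}g}_{L^{1}}\cleq\norm{(1+X)g}_{L^{2}}$, then Lemmas \ref{lem4.2}, \ref{lem4.4}, \ref{iv:0} applied to $\mathcal{F}^{-1}N(\hat u)$, obtaining $\norm{\mathcal{E}_{1}(t)}_{L^{\infty}}\cleq t^{-5/4+\eps'}\norm{\hat u(t)}_{L^{\infty}}^{2}\big(\norm{u(t)}_{L^{2}}+\norm{XU(-t)u(t)}_{L^{2}}\big)$. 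For $\mathcal{E}_{2}$: write $u-\mathcal{MD}\hat u=R_{\hat u}$, bound $|N(u)-N(\mathcal{MD}\hat u)|\cleq(|u|+|R_{\hat u}|)^{2}|R_{\hat u}|$ pointwise, and combine the $|t|^{-3/4+}$ decay of $R_{\hat u}$ (and of $\partial_{x}R_{\hat u}$, via Lemmas \ref{lem4.5}, \ref{lem4.2}, \ref{lem4.4}) with Lemmas \ref{HY} and \ref{sob:1} to get $\norm{\mathcal{E}_{2}(t)}_{L^{\infty}}\cleq t^{-1-\delta}\cdot(\text{products of }E(t)\text{ and }\norm{\hat u(t)}_{L^{\infty}})$ for some $\delta>0$. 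Bounding $E(t)\cleq\eps(1+t)^{A\eps}$ and $\norm{\hat u(t)}_{L^{\infty}}\cleq E(t)\cleq\eps(1+t)^{A\eps}$ (Lemma \ref{sob:1}), and noting every error is at least cubic, the exponents $-5/4+\eps'+3A\eps$ and $-1-\delta+3A\eps$ are $<-1$ once $\eps,\eps'$ are small, so $\int_{1}^{\infty}(\norm{\mathcal{E}_{1}}_{L^{\infty}}+\norm{\mathcal{E}_{2}}_{L^{\infty}})\,dt\cleq\eps^{3}$. Hence $\norm{\hat u(t)}_{L^{\infty}}\cleq\eps$ for $|t|\ge1$; the reduction above then gives $\norm{u(t)}_{L^{\infty}}\cleq\eps(1+|t|)^{-1/2}$, and together with the first step this is $\norm{u}_{\mathscr{Z}_{T}^{\varepsilon,A}}\le C_{1}\eps$ with $C_{1}$ independent of $T$ and $\eps$. \emph{The main obstacle is the estimate of $\mathcal{E}_{2}$}: one must show that replacing the true nonlinearity by the resonant one $(2t)^{-1}\mathcal{MD}N(\hat u)$ leaves an error whose $\mathcal{F}U(-t)$-image decays strictly faster than $t^{-1}$ in $L^{\infty}_{x}$. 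This hinges on the decay rate of the remainder $R_{\hat u}=u-\mathcal{MD}\hat u$ supplied by Lemma \ref{lem4.5}, on the fact (used above) that $\hat u(t)\in\Sigma_{c}(\cG)$ so that Lemma \ref{lem4.2} is available — which would fail for a general element of $H^{1}(\cG)$, cf.\ the remark after Lemma \ref{lem4.2} — and on a careful accounting of the slowly growing factors $(1+t)^{A\eps}$ against the decay margins, which is exactly what forces $\eps$ (and $\delta$) to be taken small. Everything else is the Duhamel/Gr\"onwall and factorization machinery already used for the final-state problem.
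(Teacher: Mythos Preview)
Your overall architecture matches the paper's: the $E(t)$ bound is exactly Proposition~\ref{iv:pro1} (Duhamel, Lemma~\ref{iv:0}, Gr\"onwall, with $A=c_0$), and the reduction $\|u\|_{L^\infty}\cleq t^{-1/2}\|\hat u\|_{L^\infty}+t^{-1/2-\alpha}E(t)$ is the paper's Lemma~\ref{iv:3}. Where you diverge is in the ODE step for $\hat u=\mathcal{F}U(-t)u=\mathcal{F}v$. The paper does \emph{not} compare $N(u)$ with $N(\mathcal{MD}\hat u)$; it uses the exact identity $\mathcal{F}U(-t)N(u)=(2t)^{-1}\mathcal{F}\mathcal{M}^{-1}\mathcal{F}^{-1}N(\mathcal{F}\mathcal{M}v)$ (gauge invariance applied once) and then splits on the Fourier side into $I_1=\mathcal{F}(\mathcal{M}^{-1}-1)\mathcal{F}^{-1}N(\mathcal{F}\mathcal{M}v)$ and $I_2=N(\mathcal{F}\mathcal{M}v)-N(\mathcal{F}v)$. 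Both are bounded directly in $L^\infty$ by $t^{-\alpha}\|v\|_{H^{0,1}}^3$ using only Hausdorff--Young and Lemma~\ref{lem4.5}; no derivative of a remainder ever enters. The paper then removes the resonant term with the integrating factor $\mathcal{B}(t)$ (Proposition~\ref{iv:pro2}), which is equivalent to your multiply-by-$\overline{\hat u_j}$ trick for controlling $|\hat u|$ but has the bonus of tracking the phase, needed later for Theorem~\ref{iv:thm3}.

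Your decomposition is algebraically related ($\mathcal{E}_1+\mathcal{E}_2=-(2t)^{-1}\lambda(I_1+I_2)$), so the claim $\|\mathcal{E}_2\|_{L^\infty}\cleq t^{-1-\delta}$ is in fact true, but your sketched route to it is where the argument gets loose. To bound $\mathcal{E}_2=\mathcal{F}U(-t)[N(u)-N(\mathcal{MD}\hat u)]$ in $L^\infty$ by Sobolev you need $\|XU(-t)(N(u)-N(\mathcal{MD}\hat u))\|_{L^2}$; applying Lemma~\ref{iv:0d} produces a term $(\|u\|_{L^\infty}^2+\|\mathcal{MD}\hat u\|_{L^\infty}^2)\|XU(-t)R_{\hat u}\|_{L^2}$, and $\|XU(-t)R_{\hat u}\|_{L^2}=\|X(1-\mathcal{M}^{-1})v\|_{L^2}$ carries \emph{no} extra time decay (only $\cleq\|Xv\|_{L^2}$), so this path yields only $t^{-1}$ rather than $t^{-1-\delta}$. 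Your assertion that $\partial_xR_{\hat u}$ decays like $t^{-3/4+}$ is not established and is not what rescues this term. The clean fix is to note $\mathcal{E}_2=(2t)^{-1}\mathcal{F}\mathcal{M}^{-1}\mathcal{F}^{-1}(-\lambda I_2)$ and estimate $I_2$ directly as the paper does; then the $t^{-\alpha}$ comes from $\|\mathcal{F}(\mathcal{M}-1)v\|_{L^\infty}\cleq\|(\mathcal{M}-1)v\|_{L^1}$, not from any derivative of $R_{\hat u}$.
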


We give the proof of this proposition later. Once we obtain this estimate, Theorem \ref{iv:thm1} will be proven as follows.

\begin{proof}[Proof of Theorem \ref{iv:thm1}]
Let $\varepsilon_1$ be smaller than those in Corollary \ref{cor4.4} and Proposition \ref{prop4.5} and satisfy $C_1< \varepsilon_1^{-1/2}$, where $C_1$ is the constant in Proposition \ref{prop4.5}. 
By Corollary \ref{cor4.4}, we have a time $T$ and the unique solution satisfying 
\begin{align}
	\norm{u}_{\mathscr{Z}_{T}^{\varepsilon,A}} < \varepsilon^{\frac{1}{2}}.
\end{align}
We here denote the maximal existence time of its solution by $T_{{\rm max}}(u_0) =: T_{{\rm max}}$.
%Let us prove by contradiction. Assume $T_{\max} < \infty$. 
First, we will show 
\begin{align}
\label{eq4.16}
	\norm{u}_{\mathscr{Z}_{T_{\max}}^{\varepsilon,A}} \leq  \varepsilon^{\frac{1}{2}}
\end{align}
holds. If not, there exists $T_0>0$ such that $\norm{u}_{\mathscr{Z}_{T_0}^{\varepsilon,A}} = \varepsilon^{1/2}$.
It follows from Proposition \ref{prop4.5} and $\varepsilon<\varepsilon_1$ that
\begin{align*}
	\norm{u}_{\mathscr{Z}_{T_0}^{\varepsilon,A}} \le C_1 \varepsilon < \varepsilon^{\frac{1}{2}}.
\end{align*}
This is a contradiction. Thus, $\norm{u}_{\mathscr{Z}_{T_{\max}}^{\varepsilon,A}} \leq  \varepsilon^{1/2}$ holds. 

Next, let us prove $T_{\max}=\infty$ by a contradiction. Suppose that $T_{\max}<\infty$. 
%
%Firstly, in order to show $\norm{u}_{\mathscr{Z}_{T_{\rm{max}}}^{\eps}} \leq B_0 \eps_1$, assume that there exists the maximal time $T_1 \in (0,T_{\rm{max}})$ such that 
%$\norm{u}_{\mathscr{Z}_{T_{1}}^{\eps}} \leq B_0 \eps_1$. By Proposition \ref{iv:ap}, 
%one has
%\[
%	\norm{u}_{\mathscr{Z}_{T_{1}}^{\eps}} \leq C_0 \eps.
%\]
%This yields $B_0 \eps_1 \leq C_0 \eps$ because of $\norm{u}_{\mathscr{Z}_{T_{1}}^{\eps}} = B_0 \eps_1$ by the continuity argument. This contradicts the definition of $\eps$ and thus $\norm{u}_{\mathscr{Z}_{T_{\rm{max}}}^{\eps}} \leq B_0 \eps_1$. 
Take arbitrarily small $\delta >0$. We set $T_{\delta} = T_{\rm{max}} - \delta < T_{\rm{max}}$.
Then, by \eqref{eq4.16}, % Proposition \ref{iv:1}, 
we have
\[
	\norm{u}_{\mathscr{Z}_{T_{\delta}}^{\eps,A}} \leq \varepsilon^{\frac{1}{2}}.
\]
and thus
\begin{align*}
	\norm{u_1(T_{\delta})}_{H_c^{1,0}} + \norm{U(-T_{\delta})u_1(T_{\delta})}_{H^{0,1}} \leq \varepsilon^{\frac{1}{2}} (1+T_{\delta})^{A\eps} \leq \varepsilon_1^{\frac{1}{2}} (1+T_{{\rm max}})^{A\eps_1} =:K. 
\end{align*}
We note that $K$ is independent of $\delta$. By Proposition \ref{iv:1} as $t_0=T_{\delta}$, we get the solution $\tilde{u}(t)$ on $[T_{\delta}, T_{\delta}+T(K)]$. 
Taking small $\delta>0$, $T_{\delta} + T(K) = T_{\rm{max}} - \delta +T(K) > T_{\rm{max}}$, since we may take $\delta < T(K)$. By the uniqueness and continuity in time,
$u(t)$ exists on $[T_{\rm{max}}, T_{\rm{max}} - \delta +T(K)]$. This is a contradiction.
The proof is completed.
\end{proof}

To prove Proposition \ref{prop4.5}, we need the following lemmas: 

\begin{lemma} \label{iv:3}
Let $u \in C(\R, H_c^{1}(\mathcal{G}))$ and $\alpha \in [0,1/4)$. Then, it holds that
\begin{align*}
	\norm{u(t)}_{L^\infty} \cleq |t|^{-\frac{1}{2}} \norm{\mathcal{F} U(-t) u(t)}_{L^\infty} 
	+ |t|^{-\frac{1}{2}- \alpha} \norm{U(-t)u(t)}_{H^{0,1}}
\end{align*}
for any $t \geq 0$.
\end{lemma}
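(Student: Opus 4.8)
The plan is to exploit the factorization formula $U(t) = \mathcal{M}\mathcal{D}\mathcal{F}\mathcal{M}$ from Proposition \ref{prop4.1}, which gives $u(t) = \mathcal{M}\mathcal{D}\mathcal{F}\mathcal{M}U(-t)u(t)$, and then to replace $\mathcal{F}\mathcal{M}U(-t)u(t)$ by $\mathcal{F}U(-t)u(t)$ up to an error controlled by $\mathcal{M}-1$. Concretely, I would write
\begin{align*}
u(t) = \mathcal{M}\mathcal{D}\mathcal{F}U(-t)u(t) + \mathcal{M}\mathcal{D}\mathcal{F}(\mathcal{M}-1)U(-t)u(t).
\end{align*}
Since $\mathcal{M}$ is a pointwise unimodular multiplier and $\mathcal{D}$ satisfies $\|\mathcal{D}\varphi\|_{L^\infty} = (2|t|)^{-1/2}\|\varphi\|_{L^\infty}$, the first term is bounded in $L^\infty(\mathcal{G})$ by $|t|^{-1/2}\|\mathcal{F}U(-t)u(t)\|_{L^\infty}$, which is the first term on the right-hand side. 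For the second term, I would again use $\|\mathcal{M}\mathcal{D}\,\cdot\,\|_{L^\infty} \lesssim |t|^{-1/2}\|\cdot\|_{L^\infty}$ and then the Hausdorff--Young inequality (Lemma \ref{HY}) to pass from $\|\mathcal{F}(\mathcal{M}-1)U(-t)u(t)\|_{L^\infty}$ to $\|(\mathcal{M}-1)U(-t)u(t)\|_{L^1(\mathcal{G})}$.

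The remaining ingredient is the decay estimate for $\mathcal{M}-1$ in $L^1$, namely Lemma \ref{lem4.5} with $p=1$ and the choice of exponent $2\alpha' := 2\alpha + 1/2 \in [0,1]$ (valid since $\alpha < 1/4$), giving
\begin{align*}
\|(\mathcal{M}-1)U(-t)u(t)\|_{L^1} \lesssim |t|^{-\alpha - \frac14}\|X^{2\alpha+\frac12}U(-t)u(t)\|_{L^1}.
\end{align*}
Then I would bound the weighted $L^1$ norm by a weighted $L^2$ norm: $\|X^{2\alpha+1/2}g\|_{L^1} \lesssim \|g\|_{L^2} + \|Xg\|_{L^2} \lesssim \|g\|_{H^{0,1}}$, using $2\alpha+1/2 < 1$ and the interpolation $x^s \le 1 + x$ together with Cauchy--Schwarz on each half-line (the $\langle x\rangle^{-1}$ weight is $L^2$ on $(0,\infty)$ after one has an extra power to spare — more carefully, $\|\langle X\rangle^{1/2+2\alpha} g\|_{L^1} \lesssim \|\langle X\rangle g\|_{L^2}$ since $\langle x\rangle^{1/2+2\alpha-1} \in L^2(0,\infty)$ when $2\alpha < 1/2$). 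Combining, the second term contributes $|t|^{-1/2}\cdot|t|^{-\alpha-1/4}\cdot\|U(-t)u(t)\|_{H^{0,1}}$; since $|t|^{-1/2-\alpha-1/4} \le |t|^{-1/2-\alpha}$ for $|t|$ away from $0$ (and the statement is trivially adjusted for bounded $t$ using Sobolev embedding, Lemma \ref{sob:1}), this is dominated by $|t|^{-1/2-\alpha}\|U(-t)u(t)\|_{H^{0,1}}$, as claimed.

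The main obstacle is purely bookkeeping at small times and at the endpoint exponents: the weighted-$L^1$-into-$H^{0,1}$ embedding needs the strict inequality $2\alpha < 1/2$ to make $\langle x\rangle^{2\alpha - 1/2}$ square-integrable on the half-line, which is exactly why the hypothesis is $\alpha \in [0,1/4)$; and for $|t| \lesssim 1$ one cannot rely on the $|t|^{-1/2}$ gain, so one invokes continuity of $t \mapsto u(t)$ in $H^1_c$ and the Sobolev embedding $\|u(t)\|_{L^\infty} \lesssim \|u(t)\|_{H^1}$ separately. No genuinely hard estimate is involved beyond assembling Proposition \ref{prop4.1}, Lemma \ref{HY}, and Lemma \ref{lem4.5}.
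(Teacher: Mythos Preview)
Your decomposition via $U(t)=\mathcal{M}\mathcal{D}\mathcal{F}\mathcal{M}$, the split into main term plus $\mathcal{F}(\mathcal{M}-1)$-remainder, and the use of Hausdorff--Young to pass to $L^1$ are exactly the paper's approach. The problem is in your weighted embedding step: the claim $\langle x\rangle^{2\alpha-1/2}\in L^2(0,\infty)$ ``when $2\alpha<1/2$'' is wrong. Square-integrability requires $2(2\alpha-\tfrac12)<-1$, i.e.\ $\alpha<0$, so for every $\alpha\in[0,1/4)$ your bound $\|\langle X\rangle^{1/2+2\alpha}g\|_{L^1}\lesssim\|\langle X\rangle g\|_{L^2}$ fails. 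The detour through the exponent $\alpha'=\alpha+1/4$ buys you extra time decay that you cannot afford on the spatial side.

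The fix---and this is what the paper does---is simply not to take that extra $1/4$: apply Lemma~\ref{lem4.5} with the exponent $\alpha$ itself, obtaining
\[
\|(\mathcal{M}-1)U(-t)u(t)\|_{L^1}\ \lesssim\ |t|^{-\alpha}\,\|X^{2\alpha}U(-t)u(t)\|_{L^1}.
\]
Now Cauchy--Schwarz gives $\|X^{2\alpha}g\|_{L^1}\le\|\langle X\rangle^{2\alpha-1}\|_{L^2}\,\|\langle X\rangle g\|_{L^2}$, and $\langle x\rangle^{2\alpha-1}\in L^2(0,\infty)$ precisely when $4\alpha-2<-1$, i.e.\ $\alpha<1/4$. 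This is exactly where the hypothesis $\alpha\in[0,1/4)$ enters, it yields $|t|^{-1/2-\alpha}\|U(-t)u(t)\|_{H^{0,1}}$ directly, and no separate small-$t$ argument is needed.
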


\begin{proof}
By the factorization property of $U(t)$, we have
\begin{align*}
	u(t)=U(t)U(-t)u(t)
%	\\
%	&=\mathcal{MDFM}U(-t)u(t)
%	\\
%	&
	=\mathcal{MDF}U(-t)u(t) + \mathcal{MDF}(\mathcal{M}-1)U(-t)u(t),
\end{align*}
which implies 
\begin{align*}
	\norm{u(t)}_{L^\infty}
	&\leq \norm{\mathcal{MDF}U(-t)u(t)}_{L^\infty} + \norm{\mathcal{MDF}(\mathcal{M}-1)U(-t)u(t)}_{L^\infty}
	\\
	&\cleq |t|^{-1/2}\norm{\mathcal{F}U(-t)u(t)}_{L^\infty} + |t|^{-1/2}\norm{\mathcal{F}(\mathcal{M}-1)U(-t)u(t)}_{L^\infty}.
\end{align*}
Also, one sees from Lemma \ref{lem4.5} and the H\"{o}lder inequality that 
\begin{align*}
	\norm{\mathcal{F}(\mathcal{M}-1)U(-t)u(t)}_{L^\infty}
	& \cleq \norm{(\mathcal{M}-1)U(-t)u(t)}_{L^1}
	\\
	& \cleq |t|^{-\alpha} \norm{X^{2\alpha}U(-t)u(t)}_{L^1}
	\\
	& \cleq |t|^{-\alpha} \norm{U(-t)u(t)}_{H^{0,1}},
\end{align*}
where we use $\alpha<1/4$ in the last inequality. This completes the proof. 
\end{proof}

We define the operator $\mathcal{J}$ and $\mathcal{L}$ by 
\begin{align*}
	\mathcal{J} &= \mathcal{J}(t) :=U(t)XU(-t), 
	\\
	\mathcal{L}&= i\partial_t + \Delta_{K}.
\end{align*} 

\begin{lemma} \label{iv:4}
We have $[\mathcal{J}, \mathcal{L}] := \mathcal{J}\mathcal{L}-\mathcal{L}\mathcal{J}= 0$.
\end{lemma}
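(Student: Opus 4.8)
The plan is to prove $[\mathcal{J},\mathcal{L}]=0$ by a direct conjugation computation, using only two facts about the free propagator: $U(t)=e^{it\Delta_K}$ commutes with $\Delta_K$, and $\frac{d}{dt}U(\pm t)=\pm i\Delta_K U(\pm t)$ (Stone's theorem); the multiplication operator $X$ is, of course, $t$-independent. I emphasize that, unlike on the line $\R$, one should not try to short-cut the argument through an explicit formula of the type $\mathcal{J}(t)=X+2it\partial_x$: on the graph $\partial_x$ need not commute with $\Delta_K$ (this is related to the failure $\partial_x\mathcal{F}\ne i\mathcal{F}X$ noted in the Remark after Lemma \ref{lem4.4}), so the conjugation $U(t)\partial_x U(-t)$ is genuinely $t$-dependent. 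The computation below is insensitive to this point because it never differentiates $X$ in $x$.

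Concretely, for a sufficiently regular time-dependent $u$, differentiating $U(-t)u(t)$ in $t$ and using $\Delta_K U(-t)=U(-t)\Delta_K$ gives
\begin{align*}
	\partial_t\big(U(-t)u(t)\big)=U(-t)\big(\partial_t u(t)-i\Delta_K u(t)\big)=-i\,U(-t)\,\mathcal{L}u(t),
\end{align*}
because $-i\mathcal{L}u=\partial_t u-i\Delta_K u$. Since $X$ does not depend on $t$, this yields $\partial_t\big(XU(-t)u(t)\big)=-i\,XU(-t)\,\mathcal{L}u(t)$; applying $U(t)$ and using $\frac{d}{dt}U(t)=i\Delta_K U(t)$ then gives
\begin{align*}
	\partial_t\big(\mathcal{J}(t)u(t)\big)=i\Delta_K\,\mathcal{J}(t)u(t)-i\,\mathcal{J}(t)\,\mathcal{L}u(t).
\end{align*}
Multiplying this by $i$ and adding $\Delta_K\mathcal{J}(t)u(t)$, the two terms $\Delta_K\mathcal{J}(t)u(t)$ cancel and we obtain $\mathcal{L}\big(\mathcal{J}(t)u(t)\big)=\mathcal{J}(t)\,\mathcal{L}u(t)$, i.e. $\mathcal{J}\mathcal{L}u=\mathcal{L}\mathcal{J}u$, which is exactly $[\mathcal{J},\mathcal{L}]=0$.

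The one place where care is needed is the bookkeeping of domains: I must make sure $XU(-t)u(t)$ is regular enough for $\Delta_K$ to act on $U(t)XU(-t)u(t)$ and for the product rule in $t$ above to be valid. I would do this by first proving the identity on a dense class of functions that are smooth and rapidly decaying on each edge (and continuous at the vertex, as in $\Sigma_c(\mathcal{G})$), and then passing to the general case by density; the Kirchhoff condition enters only through the single fact that $U(\pm t)$ commutes with $\Delta_K$, so no additional structure of the graph is used. There is no delicate cancellation beyond the elementary algebra displayed above, so this step, while somewhat tedious, is the main (and essentially only) obstacle.
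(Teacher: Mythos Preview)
Your proof is correct and follows essentially the same approach as the paper: both compute $\mathcal{L}\mathcal{J}u$ by applying the product rule in $t$ to $U(t)XU(-t)u(t)$, using only that $X$ is $t$-independent and that $U(\pm t)$ commutes with $\Delta_K$, so the $\Delta_K$-terms cancel and one is left with $\mathcal{J}\mathcal{L}u$. Your remark about not using the heuristic $\mathcal{J}=X+2it\partial_x$ on the graph and your attention to domain bookkeeping go slightly beyond what the paper spells out, but the core computation is the same.
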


\begin{proof}
Noting that $U(t)=e^{it\Delta_{K}}$ and $i\partial_{t} U(t)=e^{it\Delta_{K}} (-\Delta_{K})$, we calculate 
\begin{align*}
	\mathcal{L}\mathcal{J}f &=(i\partial_t + \Delta_{K})(U(t)XU(-t)f)
	\\
	&=i\partial_t (U(t)XU(-t)f)+ U(t)\Delta_{K} XU(-t)f
	\\
	&= -(U(t)\Delta_{K}  XU(-t)f)+ (U(t)X(i\partial_t )U(-t)f)+ U(t)\Delta_{K} XU(-t)f
	\\
	&=(U(t)X(i\partial_t )U(-t)f)
	\\
	&=(U(t)X(\Delta_{K})U(-t)f) + (U(t)XU(-t)(i\partial_t )f)
	\\
	&=U(t)XU(-t) (i\partial_t + \Delta_{K})f=\mathcal{J}\mathcal{L}f.
\end{align*}
The proof is completed. 
\end{proof}

%We define a function space.
%
%\begin{align*}
%	X_T &:= \{ \varphi \in CL^2: \norm{\varphi}_{X_T} <\infty \}
%	\\
%	\norm{\varphi}_{X_T} &:=\sup_{t\in [-T,T]} (1+|t|)^{-C\varepsilon} \norm{\varphi}_{H^1}
%	+ \sup_{t\in [-T,T]} (1+|t|)^{-C\varepsilon} \norm{(1+X)U(-t)\varphi}_{L^2} 
%	\\
%	&\qquad+ \sup_{t\in [-T,T]} (1+|t|)^{1/2} \norm{\varphi}_{L^\infty} 
%\end{align*}

We divide the proof of Proposition \ref{prop4.5} into three parts as follows:

\begin{proposition} \label{iv:pro1}
%Fix $t_0 \in \R$.
There exists $\eps_1>0$ such that the following assertion holds: 
If $\varepsilon$, $u_0$, and $u$ satisfy the assumption in Proposition \ref{prop4.5}, 
%Assume that $u_0 \in \Sigma_c(\cG)$ with $\norm{u_0}_{\Sigma} =: \eps  \leq \eps_1$.
%Let $u \in \mathscr{Z}_{T}^{\eps}$ be a solution with $u(0) =u_0$ given by Proposition \ref{iv:1} . 
then the estimate
\begin{align*}
	&{}(1+|t|)^{-A \eps} \l( \norm{u(t)}_{H^{1}} + \norm{U(-t)u(t)}_{H^{0,1}} \r) \leq \varepsilon %\norm{u_0}_{\Sigma} =:\eps
\end{align*}
is valid for any $t \in [0,T]$.
\end{proposition}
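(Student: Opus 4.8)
The plan is to run the standard Hayashi--Naumkin energy estimate on the quantities $\norm{u(t)}_{H^1}$ and $\norm{U(-t)u(t)}_{H^{0,1}} = \norm{\mathcal{J}(t)u(t)}_{L^2}$, using the bootstrap hypothesis \eqref{eq4.15} to close. First I would record the two a priori consequences of \eqref{eq4.15}: the decay bound $\norm{u(t)}_{L^\infty} \cleq \varepsilon^{1/2}(1+|t|)^{-1/2}$ coming directly from the definition of the $\mathscr{Z}$-norm, and the growth bounds $\norm{u(t)}_{H^1} + \norm{U(-t)u(t)}_{H^{0,1}} \cleq \varepsilon^{1/2}(1+|t|)^{A\varepsilon}$. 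The point of the proposition is to improve the prefactor $\varepsilon^{1/2}$ to $\varepsilon$ while keeping the $(1+|t|)^{A\varepsilon}$ growth.

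The key steps, in order: (1) Apply $\partial_x$ to \eqref{eq:NLS} and pair with $\partial_x u$; since $\Delta_K$ is self-adjoint and the Kirchhoff condition is preserved under the flow (so the boundary terms in the integration by parts vanish because $u(t)\in \mathscr{D}(\Delta_K)$ at each time, or rather by the density/energy argument for $H^1_c$), one gets $\frac{d}{dt}\norm{\partial_x u}_{L^2}^2 \cleq \norm{\partial_x(|u|^2u)}_{L^2}\norm{\partial_x u}_{L^2} \cleq \norm{u}_{L^\infty}^2 \norm{\partial_x u}_{L^2}^2$ by the second inequality in Lemma \ref{iv:0}. (2) Similarly, since $\mathcal{J}$ commutes with $\mathcal{L} = i\partial_t + \Delta_K$ by Lemma \ref{iv:4}, one has $\mathcal{L}(\mathcal{J}u) = \mathcal{J}\mathcal{L}u = -\lambda\mathcal{J}(|u|^2u) = -\lambda U(t)XU(-t)(|u|^2u)$, so $\frac{d}{dt}\norm{\mathcal{J}u}_{L^2}^2 \cleq \norm{XU(-t)(|u|^2u)}_{L^2}\norm{XU(-t)u}_{L^2} \cleq \norm{u}_{L^\infty}^2\norm{U(-t)u}_{H^{0,1}}^2$ by the first inequality in Lemma \ref{iv:0} (here is where the choice $A = c_0$ enters, $c_0$ being the constant in Lemma \ref{iv:0}). (3) Combine: writing $g(t) := \norm{u(t)}_{H^1}^2 + \norm{U(-t)u(t)}_{H^{0,1}}^2$ and $\norm{u(t)}_{L^2} = \norm{u_0}_{L^2}$ conserved, one obtains $\frac{d}{dt}g(t) \cleq c_0\norm{u(t)}_{L^\infty}^2 g(t) \cleq c_0 \varepsilon (1+|t|)^{-1} g(t)$ using the decay bound. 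Grönwall then gives $g(t) \cleq g(0)(1+|t|)^{C c_0 \varepsilon}$, i.e. $\norm{u(t)}_{H^1} + \norm{U(-t)u(t)}_{H^{0,1}} \cleq \varepsilon (1+|t|)^{A\varepsilon}$ once $A$ is taken at least $Cc_0$ and $g(0) \cleq \varepsilon^2$ from the hypothesis on $u_0$. (4) Rearranging gives $(1+|t|)^{-A\varepsilon}(\norm{u(t)}_{H^1} + \norm{U(-t)u(t)}_{H^{0,1}}) \cleq \varepsilon$, as claimed.

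The main technical obstacle is step (1)--(2): justifying the differential identities rigorously, i.e. that $t\mapsto \norm{\partial_x u(t)}_{L^2}^2$ and $t\mapsto \norm{\mathcal{J}(t)u(t)}_{L^2}^2$ are absolutely continuous with the asserted derivatives, and that the integration-by-parts boundary terms at the vertex genuinely cancel. The cancellation relies on the Kirchhoff condition $\sum_j f_j'(0+) = 0$ together with $f_j(0)$ being $j$-independent, which is exactly what kills $\sum_j [\,\overline{\partial_x u_j}\,\partial_x^2 u_j - \overline{u_j}\,\partial_x u_j\,]_{x=0}$-type terms; one typically handles this by working with the mild formulation and the already-established local theory (Proposition \ref{iv:1}), differentiating the Duhamel formula, or by a density argument approximating $u_0$ in $\mathscr{D}(\Delta_K)$. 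The remaining arithmetic — checking that $Cc_0 \le A = c_0$ forces no inconsistency (it does not, since one is free to have chosen $A$ large, but here $A=c_0$ is fixed, so in fact one absorbs constants by taking $\varepsilon_1$ small so that $Cc_0\varepsilon \le A\varepsilon$ fails unless $C\le 1$; more honestly, the Grönwall exponent is $C\norm{u}_{L^\infty}^2$-integrated, and since $\int_0^t C\varepsilon(1+s)^{-1}ds = C\varepsilon\log(1+t)$, one needs $A \ge C$, so one should read "fix $A=c_0$" as shorthand for "fix $A$ equal to a large enough multiple of $c_0$", and then choose $\varepsilon_1$ accordingly) — is routine.
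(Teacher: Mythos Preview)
Your overall strategy---use the commutator Lemma~\ref{iv:4}, feed in Lemma~\ref{iv:0}, exploit the bootstrap decay $\norm{u(t)}_{L^\infty}^2 \le \varepsilon(1+|t|)^{-1}$, and close with Gr\"onwall---matches the paper exactly. The difference is that you work with the \emph{differential} form $\tfrac{d}{dt}g(t)\lesssim c_0\norm{u}_{L^\infty}^2 g(t)$, whereas the paper works directly with the Duhamel formulation: from $\mathcal{J}(t)u(t)=U(t)Xu_0+i\lambda\int_0^t U(t-s)\mathcal{J}(s)(|u|^2u)\,ds$ and the unitarity of $U$ one gets
\[
\norm{XU(-t)u(t)}_{L^2}\le \norm{Xu_0}_{L^2}+c_0\varepsilon\int_0^t(1+s)^{-1}\norm{XU(-s)u(s)}_{L^2}\,ds,
\]
and similarly for $\norm{u}_{H^1}$; the integral Gr\"onwall inequality then yields $\norm{XU(-t)u(t)}_{L^2}\le\norm{Xu_0}_{L^2}(1+t)^{c_0\varepsilon}$ with \emph{no implicit constant} in the exponent.

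This buys two things that your writeup struggles with. First, the ``main technical obstacle'' you flag---justifying absolute continuity of $t\mapsto\norm{\partial_x u}_{L^2}^2$ and the vanishing of vertex boundary terms---simply does not arise: the Duhamel identity is already available from the local theory, and one only needs that $U(t)$ is an isometry on $L^2$ and on $H^1_c$ (the form domain of $-\Delta_K$). Second, your parenthetical worry about whether $A=c_0$ is large enough is a genuine artifact of the differential route: pairing and Cauchy--Schwarz introduce a factor of $2$ (so the exponent becomes $2c_0\varepsilon$, not $c_0\varepsilon$), and any further $\lesssim$ in your chain multiplies this. The integral formulation produces exactly $c_0\varepsilon\log(1+t)$ in the exponent, so $A=c_0$ is the correct and sharp choice, and the final bound is $\le\varepsilon$ on the nose rather than $\lesssim\varepsilon$. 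Your suggestion to reinterpret ``$A=c_0$'' as ``a large enough multiple of $c_0$'' is unnecessary once you switch to the integral form.
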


\begin{proof}
By Lemma \ref{iv:4}, multiplying the equation \eqref{eq:NLS} by the operator $\mathcal{J}$, 
%\begin{align*}
%	L(Ju)= JLu=J(\lambda |u|^2u)
%\end{align*}
%
\begin{align*}
	(i\partial_t + \Delta_K)(\mathcal{J}u) = -\lambda \mathcal{J}( |u|^2u).
\end{align*}
The equation is rewritten as an integral equation
\begin{align*}
	\mathcal{J}(t)u(t)= U(t)\mathcal{J}(0)u_{0} + i\lambda  \int_{0}^{t} U(t-s) \mathcal{J}(s)(|u|^2u)(s) ds.
\end{align*}
We see from \eqref{eq4.15} and Lemma \ref{iv:0} that
\begin{align*}
	\norm{XU(-t)u(t)}_{L^2}
	={}& \norm{\mathcal{J}(t)u(t)}_{L^2} \\
	\leq{}&  \norm{U(t)\mathcal{J}(0)u_{0}}_{L^2} +\int_{0}^{t} \norm{\mathcal{J}(s)(|u|^2u)(s)}_{L^2} ds
	\\
	\leq{}& \norm{X u_{0}}_{L^2} + c_0 \int_{0}^{t} \norm{u(s)}_{L^\infty}^2 \norm{XU(-s)u(s)}_{L^2} ds \\
	\leq{}& \norm{X u_{0}}_{L^2} + c_0 \varepsilon  \int_{0}^{t} (1+|s|)^{-1} \norm{XU(-s)u(s)}_{L^2} ds.
\end{align*}
We here note $A = c_0$. 
Hence, the Gronwall inequality gives us 
\begin{align*}
	\norm{XU(-t)u(t)}_{L^2} \leq \norm{X u_{0}}_{L^2} (1+|t|)^{A\eps}, 
\end{align*}
which yields
\begin{align}
	(1+|t|)^{-A\eps} \norm{XU(-t)u(t)}_{L^2} \leq \norm{X u_{0}}_{L^2}  \label{iv:5}
\end{align}
for any $t \in [0,T]$.
Arguing as in the above, since 
\begin{align*}
	\norm{u(t)}_{H^{1}} 
	\leq{}&  \norm{U(t)u_{0}}_{H^{1}} +\int_{0}^{t} \norm{(|u|^2u)(s)}_{H^{1}} ds
	\\
%	\leq{}& \norm{XU(-t_0) u_{0}}_{L^2} + C\int_{t_0}^{t} \norm{u(s)}_{L^\infty}^2 \norm{XU(-s)u(s)}_{L^2} ds \\
	\leq{}& \norm{u_{0}}_{H^{1}} +  c_0  \varepsilon  \int_{0}^{t} (1+|s|)^{-1} \norm{u(s)}_{H^{1}} ds,
\end{align*}
we deduce that
\begin{align}
	(1+|t|)^{-A\eps} \norm{u(t)}_{H^{1}} \leq \norm{u_{0}}_{H^{1}}  \label{iv:6}.
\end{align}
Collecting \eqref{iv:5} and \eqref{iv:6}, one obtains the desired estimate. 
\end{proof}

\begin{proposition} \label{iv:pro2} 
%Fix $t_0 \in \R$.
%Assume $u_0 \in H_c^{1,0}(\cG) \cap H^{0,1}(\cG)$.
%Let $u \in C([0,T], H^{1,0}_c(\cG))$ be a solution with $u(0) = u_0$ given by Theorem \ref{iv:1}. 
%Then there exists $\eps_2 \in (0,1)$ such that if $\norm{u_0}_{H_c^{1,0} \cap H^{0,1}} =: \eps \leq \eps_2$, 
%There exists $\eps_1 \in (0,1)$ such that the following assertion holds:
%Assume that $u_0 \in \Sigma_c(\cG)$ with $\norm{u_0}_{\Sigma} =: \eps  \leq \eps_1$.
%Let $u \in \mathscr{Z}_{T}^{\eps}$ be a solution with $u(0) =u_0$ given by Proposition \ref{iv:1}. 
There exists $\eps_1 \in (0,1)$ such that the following assertion holds: 
If $\varepsilon$, $u_0$, and $u$ satisfy the assumption in Proposition \ref{prop4.5}, 
then the estimate
\begin{align*}
	&{}(1+|t|)^{\frac12} \norm{u(t)}_{L^{\infty}} \leq C\varepsilon %\norm{u_0}_{H_c^{1,0} \cap H^{0,1}} =:C\eps
\end{align*}
is valid for any $t \in [0,T]$.
\end{proposition}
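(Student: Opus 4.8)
The plan is to adapt the Hayashi--Naumkin scheme to the star graph, using the factorization $U(t)=\mathcal{MDFM}$ of Proposition \ref{prop4.1} together with the mapping properties of $\mathcal{F},\mathcal{F}_{c}$ collected in Section~3. Set $v(t):=\mathcal{F}U(-t)u(t)$. By Lemma \ref{iv:3}, applied with a fixed $\alpha\in(0,1/4)$ and $\varepsilon_1$ chosen later so small that $A\varepsilon_1<\alpha$, together with Proposition \ref{iv:pro1}, the claimed bound on $\|u(t)\|_{L^{\infty}}$ for $t\ge1$ reduces to
\begin{align*}
	\|v(t)\|_{L^{\infty}(\mathcal{G})}\cleq\varepsilon,\qquad t\ge1 ;
\end{align*}
for $0\le t\le1$ the bound is immediate from the Sobolev embedding (Lemma \ref{sob:1}) and $\|u(t)\|_{H^{1}}\cleq\varepsilon(1+t)^{A\varepsilon}\cleq\varepsilon$, which is part of Proposition \ref{iv:pro1}. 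So everything reduces to the $L^{\infty}$ bound on the profile $v$.

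To analyze $v$ I would start from $i\partial_{t}v=\mathcal{F}U(-t)N(u)$ with $N(u)=-\lambda|u|^{2}u$, insert the decomposition $u=\mathcal{MD}v+\mathcal{E}$ with $\mathcal{E}:=\mathcal{MDF}(\mathcal{M}-1)U(-t)u$, and exploit the two algebraic identities $U(-t)\mathcal{MD}=\mathcal{M}^{-1}\mathcal{F}^{-1}$ (an immediate consequence of $U(t)=\mathcal{MDFM}$) and the component-wise gauge identity $(2t)^{-1}\mathcal{MD}(|v|^{2}v)=|\mathcal{MD}v|^{2}\mathcal{MD}v$ (valid since $|\mathcal{M}|\equiv1$, cf.\ \eqref{eq5.5}). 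Writing $|u|^{2}u=|\mathcal{MD}v|^{2}\mathcal{MD}v+G$ with $|G|\cleq|\mathcal{E}|(|\mathcal{MD}v|^{2}+|\mathcal{E}|^{2})$ pointwise, the equation becomes
\begin{align*}
	i\partial_{t}v=-\frac{\lambda}{2t}|v|^{2}v+\mathcal{R},\qquad \mathcal{R}:=-\frac{\lambda}{2t}\mathcal{F}(\mathcal{M}^{-1}-1)\mathcal{F}^{-1}(|v|^{2}v)-\lambda\,\mathcal{F}U(-t)G .
\end{align*}
Because the leading term is purely a phase, the function $e^{-i\phi}v$, where $\phi(t,x):=\frac{\lambda}{2}\int_{1}^{t}|v(s,x)|^{2}\,\frac{ds}{s}$, satisfies $\partial_{t}(e^{-i\phi}v)=-i\,e^{-i\phi}\mathcal{R}$; hence $|v(t,x)|\le|v(1,x)|+\int_{1}^{t}|\mathcal{R}(s,x)|\,ds$ and so $\|v(t)\|_{L^{\infty}}\le\|v(1)\|_{L^{\infty}}+\int_{1}^{t}\|\mathcal{R}(s)\|_{L^{\infty}}\,ds$. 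The initial value is harmless: $\|v(1)\|_{L^{\infty}}\cleq\|v(1)\|_{H^{1}}\cleq\|U(-1)u(1)\|_{H^{0,1}}\cleq\varepsilon$ by Lemma \ref{sob:1}, Lemmas \ref{lem4.4} and \ref{lem4.1}, and Proposition \ref{iv:pro1}.

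It then remains to show $\int_{1}^{\infty}\|\mathcal{R}(s)\|_{L^{\infty}}\,ds\cleq\varepsilon^{5/2}$. For the first term of $\mathcal{R}$ I would use the Hausdorff--Young inequality (Lemma \ref{HY}), Lemma \ref{lem4.5} used with exponent close to $1/4$ (gaining $s^{-1/4+\eta}$ for small $\eta>0$), Lemma \ref{lem4.2} to convert $X\mathcal{F}^{-1}$ into $\partial_{x}$, and Lemma \ref{iv:0}, obtaining a bound $\cleq s^{-5/4+\eta}\|v\|_{L^{\infty}}^{2}(\|v\|_{L^{2}}+\|XU(-s)u\|_{L^{2}})$. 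For the second term, since $\partial_{x}\mathcal{F}(U(-s)G)=-i\mathcal{F}_{c}(XU(-s)G)$ by Lemma \ref{lem4.4}, the Gagliardo--Nirenberg inequality on the half-lines gives $\|\mathcal{F}U(-s)G\|_{L^{\infty}}\cleq\|G\|_{L^{2}}^{1/2}\|XU(-s)G\|_{L^{2}}^{1/2}$; then Lemma \ref{iv:0d} applied to the pair $(u,\mathcal{MD}v)$, the identity $\|XU(-s)\mathcal{MD}v\|_{L^{2}}=\|XU(-s)u\|_{L^{2}}$ (again $|\mathcal{M}^{-1}|\equiv1$, using $U(-s)\mathcal{MD}v=\mathcal{M}^{-1}U(-s)u$), and the estimate $\|\mathcal{E}\|_{L^{\infty}}\cleq\varepsilon s^{-3/4+\eta}(1+s)^{A\varepsilon}$ (proved exactly as in the proof of Lemma \ref{iv:3}) reduce everything to quantities already controlled, namely $\|XU(-s)u\|_{L^{2}}+\|u(s)\|_{H^{1}}\cleq\varepsilon(1+s)^{A\varepsilon}$ and $\|u(s)\|_{L^{\infty}}\le\varepsilon^{1/2}(1+s)^{-1/2}$ from \eqref{eq4.15} and Proposition \ref{iv:pro1}, together with the crude bound $\|v\|_{L^{\infty}}\cleq\|v\|_{L^{2}}^{1/2}\|\partial_{x}v\|_{L^{2}}^{1/2}\cleq\varepsilon(1+s)^{A\varepsilon/2}$. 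Fixing $\eta$ small and then $\varepsilon_1$ small enough that the $(1+s)^{A\varepsilon}$-factors are absorbed, this gives $\|\mathcal{R}(s)\|_{L^{\infty}}\cleq\varepsilon^{5/2}(1+s)^{-1-\sigma}$ for some $\sigma>0$, which is time-integrable; combined with the previous step, $\|v(t)\|_{L^{\infty}}\cleq\varepsilon$ for $t\ge1$, and the proposition follows.

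The step I expect to be most delicate is precisely the time-integrability of $\|\mathcal{R}(s)\|_{L^{\infty}}$, because Proposition \ref{iv:pro1} supplies only the slowly growing bounds $(1+s)^{A\varepsilon}$ on the $H^{1}$ and $H^{0,1}$ norms of $u$. In particular one cannot afford the cruder chain $\|\mathcal{F}U(-s)G\|_{L^{\infty}}\cleq\|U(-s)G\|_{L^{1}}\cleq\|U(-s)G\|_{H^{0,1}}$, which only produces a term of size $\varepsilon^{2}(1+s)^{-1+A\varepsilon}$ and hence fails to be integrable; it is the interpolation $\|G\|_{L^{2}}^{1/2}\|XU(-s)G\|_{L^{2}}^{1/2}$ that recovers the missing half power of $s$. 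A secondary, routine point is to check that $v$, $\mathcal{MD}v$ and $|v|^{2}v$ lie in the function spaces required by Lemmas \ref{lem4.2}, \ref{iv:0} and \ref{iv:0d}; this holds because $v=\mathcal{F}U(-t)u$ inherits $H^{1}\cap H^{0,1}$-regularity from $u$ (via Lemmas \ref{lem4.2} and \ref{lem4.4}) and is automatically continuous at the vertex, $\mathcal{F}g$ being continuous at the origin for every $g$.
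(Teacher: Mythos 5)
Your proposal is correct, and it follows the same overall Hayashi--Naumkin scheme as the paper: reduce to an $L^\infty$ bound on the profile via Lemma \ref{iv:3} and Proposition \ref{iv:pro1}, derive an equation for the profile whose leading term $-\tfrac{\lambda}{2t}|v|^2v$ is removed by the phase $\exp\bigl(-\tfrac{i\lambda}{2}\int_1^t|v|^2\tfrac{ds}{s}\bigr)$ (the paper's $\mathcal{B}(t)$), and show the remainder is time-integrable using the $t^{-\alpha}$ gain ($\alpha<1/4$) from Lemma \ref{lem4.5} against the slow $(1+t)^{A\eps}$ growth from Proposition \ref{iv:pro1}. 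Where you genuinely diverge is in the treatment of the remainder: the paper keeps the exact scaling identity $U(-t)(|u|^2u)=(2t)^{-1}\cM(-t)\cF^{-1}(|\cF\cM(t)v|^2\cF\cM(t)v)$, so both error terms $I_1,I_2$ carry an explicit $t^{-1}$ prefactor and the crude Hausdorff--Young bound $\norm{I_2}_{L^\infty}\cleq\norm{v}_{L^1}^2\norm{(\cM-1)v}_{L^1}\cleq t^{-\alpha}\norm{v}_{H^{0,1}}^3$ already suffices; you instead split $u=\cM\cD v+\mathcal{E}$ before cubing, so your $G$-term has no visible $t^{-1}$ and you recover the decay through the interpolation $\norm{\cF U(-s)G}_{L^\infty}\cleq\norm{G}_{L^2}^{1/2}\norm{XU(-s)G}_{L^2}^{1/2}$ together with Lemma \ref{iv:0d} and the pointwise decay of $\mathcal{E}$. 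Your diagnosis that the cruder chain through $\norm{U(-s)G}_{H^{0,1}}$ fails is accurate within your decomposition, but the difficulty is an artifact of that decomposition: indeed $\cF U(-s)G=(2s)^{-1}I_2+(2s)^{-1}\cF(\cM^{-1}-1)\cF^{-1}I_2$, and in the paper's formulation the simple $L^1$-based estimate closes the argument with less work (your route, in exchange, gives a slightly better pointwise-in-time remainder bound of order $s^{-5/4+}$). Two harmless slips: the crude bound should be $\norm{v}_{L^\infty}\cleq\eps(1+s)^{A\eps}$ rather than $(1+s)^{A\eps/2}$, and the domain checks you flag (continuity of $\cF g$ at the vertex, $\cM\cD v\in\Sigma_c$, so that Lemma \ref{lem4.2} and Lemma \ref{iv:0d} apply) are indeed needed -- exactly the same verifications are implicit in the paper's estimate \eqref{iv:10}.
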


\begin{proof}
%Set $\eps_1 \in (0, \min(1, \alpha (3A)^{-1}))$ for some $\alpha \in (0, 1/4)$. 
Take $\eps_1 < \min\{1, \alpha (3A)^{-1}\}$ for some $\alpha \in (0, 1/4)$. 
When $t \leq 1$, by \eqref{iv:6} and Proposition \ref{sob:1}, we easily show
\[
	(1+|t|)^{\frac12} \norm{u(t)}_{L^{\infty}} \leq C(1+|t|)^{\frac12+A\eps} \norm{u_0}_{H^{1}} \leq C\norm{u_0}_{H^{1}} \leq C\varepsilon.
\]
We shall consider the case $t \ge 1$. %We only prove the forward direction in time.  Without loss of generality, we may assume $t_0 \ge1$.
Combining Lemma \ref{iv:3} with Proposition \ref{iv:pro1}, one obtains
\begin{align}
	\begin{aligned}
	\norm{u(t)}_{L^\infty} \leq{}& C|t|^{-\frac{1}{2}} \norm{\mathcal{F} U(-t) u(t)}_{L^\infty} 
	+ C|t|^{-\frac{1}{2}- \alpha} \norm{U(-t)u(t)}_{H^{0,1}} \\
	\leq{}& C|t|^{-\frac{1}{2}} \norm{\mathcal{F} U(-t) u(t)}_{L^\infty} 
	+ C\varepsilon|t|^{-\frac{1}{2}- \alpha+A\eps} %\norm{u_0}_{\Sigma}
	\end{aligned}
	\label{iv:8}
\end{align}
Let us handle the first term in the above last line.
It follows from \eqref{eq:NLS} that
\begin{align}
	i\partial_t (U(-t)u) = -\lambda U(-t)(|u|^2u). \label{iv:7}
\end{align}
A computation shows 
\begin{align*}
	U(-t)(|u|^2 u)
	&{}=\mathcal{M}(-t) \mathcal{F}^{-1} \mathcal{D}^{-1}\mathcal{M}(-t)(|u|^2 u)
	\\
	&{}=\mathcal{M}(-t) \mathcal{F}^{-1} \mathcal{D}^{-1}(|\mathcal{M}(-t)u|^2 \mathcal{M}(-t)u)
	\\
	&{}=(2t)^{-1}\mathcal{M}(-t) \mathcal{F}^{-1} (|\mathcal{D}^{-1}\mathcal{M}(-t)u|^2 \mathcal{D}^{-1}\mathcal{M}(-t)u).
\end{align*}
Let $v$ satisfy $U(t)v=u$, namely, $v=U(-t)u$. Then, $\mathcal{D}^{-1}\mathcal{M}(-t)u =\mathcal{F}\mathcal{M}(t)v$. 
Hence, we have 
\begin{align*}
	U(-t)(|u|^2 u) ={}& (2t)^{-1} \mathcal{M}(-t) \mathcal{F}^{-1} (|\mathcal{F}\mathcal{M}(t)v|^2\mathcal{F}\mathcal{M}(t)v) \\
%	={}& (2t)^{-1}(\mathcal{M}(-t)-1) \mathcal{F}^{-1} (|\mathcal{F}\mathcal{M}(t)v|^2\mathcal{F}\mathcal{M}(t)v) \\
%	&{}+ (2t)^{-1}\mathcal{F}^{-1} (|\mathcal{F}\mathcal{M}(t)v|^2\mathcal{F}\mathcal{M}(t)v) \\
	={}& (2t)^{-1}(\mathcal{M}(-t)-1) \mathcal{F}^{-1} (|\mathcal{F}\mathcal{M}(t)v|^2\mathcal{F}\mathcal{M}(t)v) \\
	&{}+ (2t)^{-1}\mathcal{F}^{-1}\{ |\mathcal{F}\mathcal{M}(t)v|^2\mathcal{F}\mathcal{M}(t)v - |\mathcal{F}v|^2\mathcal{F}v \} \\
	&{}+ (2t)^{-1}\mathcal{F}^{-1} (|\mathcal{F}v|^2\mathcal{F}v).
\end{align*}
%where the nonlinearity is component-wise. 
Therefore, plugging the above into \eqref{iv:7} and taking $\mathcal{F}$, it is deduced that 
\begin{align*}
	&{}i (\cF v)_t + \frac{\lambda}{2} t^{-1} (|\mathcal{F}v|^2\mathcal{F}v)
	\\
	&= -\frac{\lambda}{2} t^{-1} \mathcal{F} (\mathcal{M}(-t)-1) \mathcal{F}^{-1} (|\mathcal{F}\mathcal{M}(t)v|^2\mathcal{F}\mathcal{M}(t)v)
	\\
	&\quad -\frac{\lambda}{2} t^{-1} \{ |\mathcal{F}\mathcal{M}(t)v|^2\mathcal{F}\mathcal{M}(t)v - |\mathcal{F}v|^2\mathcal{F}v \}.
\end{align*}
%\begin{align*}
%	&i (\mathcal{F}v)_t  + \frac{\lambda}{2} t^{-1}(|\mathcal{F}v|^2\mathcal{F}v)
%	\\
%	&=\lambda t^{-1}[ \mathcal{F} (\mathcal{M}(-t)-1) \mathcal{F}^{-1} (|\mathcal{F}\mathcal{M}(t)v|^2\mathcal{F}\mathcal{M}(t)v)
%	\\
%	&\quad+( |\mathcal{F}\mathcal{M}(t)v|^2\mathcal{F}\mathcal{M}(t)v - |\mathcal{F}v|^2\mathcal{F}v ) ]
%\end{align*}
We here define
\begin{align*}
	I_1(t) &{}= \mathcal{F}(\mathcal{M}(-t)-1) \mathcal{F}^{-1} (|\mathcal{F}\mathcal{M}(t)v|^2\mathcal{F}\mathcal{M}(t)v),
	\\
	I_2(t) &{}= |\mathcal{F}\mathcal{M}(t)v|^2\mathcal{F}\mathcal{M}(t)v - |\mathcal{F}v|^2\mathcal{F}v.
\end{align*}
Let us also introduce a phase translation as follows:
\begin{align*}
	 w= \mathcal{F}^{-1}( \mathcal{B}(t) \mathcal{F}v),
\end{align*}
where 
\begin{align*}
	\mathcal{B}(t)&{}=
	\begin{pmatrix}
	B_1 & 0 & \cdots &0
	\\
	0 & B_2 & \cdots &0
	\\
	\vdots & & & \vdots
	\\
	0 & 0 & \cdots & B_n
	\end{pmatrix}
	, \quad
	B_j(t) = \exp \left( - \frac{\lambda}{2} i\int_{1}^{t} |(\mathcal{F}v)_j|^2 \frac{d\tau}{\tau} \right).
\end{align*}
Thus, we obtain
\begin{align*}
	i \partial_t (\mathcal{F}w) = -\frac{\lambda}{2} t^{-1} \mathcal{B}(t) (I_1+I_2)(t).
\end{align*}
Integrating the above on $[1, t]$, noting that $\mathcal{F}w(1) = \mathcal{F}v(1)=\mathcal{F}U(-1)u(1)$, one has
\begin{align}
	\mathcal{F}w(t) =\mathcal{F}U(-1)u(1)+ \frac{\lambda}{2} i \int_{1}^{t} \mathcal{B}(\tau)  (I_1+I_2)(\tau) \frac{d \tau}{\tau}. \label{iv:12}
\end{align}
We shall estimate $I_1$. By means of Lemma \ref{lem4.5} and Sobolev and Hausdorff--Young inequalities, one sees that
\begin{align}
	\begin{aligned}
	\norm{I_1(t)}_{L^{\infty}} ={}& \norm{\mathcal{F}(\mathcal{M}(-t)-1) \mathcal{F}^{-1} (|\mathcal{F}\mathcal{M}(t)v|^2\mathcal{F}\mathcal{M}(t)v)}_{L^{\infty}} \\
	\cleq{}& \norm{(\mathcal{M}(-t)-1) \mathcal{F}^{-1} (|\mathcal{F}\mathcal{M}(t)v|^2\mathcal{F}\mathcal{M}(t)v)}_{L^{1}} \\
	\cleq{}& t^{-\alpha} \norm{(1+X) \mathcal{F}^{-1} (|\mathcal{F}\mathcal{M}(t)v|^2\mathcal{F}\mathcal{M}(t)v)}_{L^{2}} \\
	\cleq{}& t^{-\alpha} \norm{\mathcal{F}\mathcal{M}(t)v}_{L^{\infty}}^2 \norm{\mathcal{F}\mathcal{M}(t)v}_{L^{2}} \\
	&{}+ t^{-\alpha} \norm{\mathcal{F}_{c}^{-1} \partial_x (|\mathcal{F}\mathcal{M}(t)v|^2\mathcal{F}\mathcal{M}(t)v)}_{L^{2}} \\
	\cleq{}& t^{-\alpha} \norm{\mathcal{F}\mathcal{M}(t)v}_{H^1}^2 \norm{\mathcal{F}\mathcal{M}(t)v}_{L^{2}} \\
	&{}+  t^{-\alpha} \norm{\partial_x \mathcal{F}\mathcal{M}(t)v}_{L^{2}} \norm{\mathcal{F}\mathcal{M}(t)v}_{L^{2}}^2 \\
	\cleq{}& t^{-\alpha} \norm{v}_{H^{0,1}}^3.
%	\cleq{}& t^{-\alpha} \l( \norm{\partial_x \mathcal{F}\mathcal{M}(t)v}_{L^{2}} \norm{\mathcal{F}\mathcal{M}(t)v}_{L^{2}}^2 +
%	\norm{\mathcal{F}\mathcal{M}(t)v}_{L^{\infty}}^2 \norm{\mathcal{F}_{c}^{-1} \partial_x (|\mathcal{F}\mathcal{M}(t)v|^2\mathcal{F}\mathcal{M}(t)v)}_{L^{2}} \r) \\
	\end{aligned}
	\label{iv:10}
\end{align}
Further, the estimation of $I_2$ is as follows:
\begin{align}
	\begin{aligned}
	\norm{I_2(t)}_{L^{\infty}} ={}& \norm{|\mathcal{F}\mathcal{M}(t)v|^2\mathcal{F}\mathcal{M}(t)v - |\mathcal{F}v|^2\mathcal{F}v}_{L^{\infty}} \\
	\cleq{}& \l( \norm{\mathcal{F}\mathcal{M}(t)v}_{L^{\infty}}^2 + \norm{\mathcal{F}v}_{L^{\infty}}^2 \r) \norm{\mathcal{F}\mathcal{M}(t)v - \mathcal{F}v}_{L^{\infty}} \\
	\cleq{}& \norm{v}_{L^{1}}^2 \norm{(\mathcal{M}(t)-1)v}_{L^1} \\
	\cleq{}& t^{-\alpha} \norm{v}_{H^{0,1}}^3.
	\end{aligned}
	\label{iv:11}
\end{align}
Moreover, we have
\begin{align*}
	 \norm{\mathcal{F}U(-1)u(1)}_{L^\infty} \leq  \norm{U(-1)u(1)}_{L^1}  \leq  \norm{U(-1)u(1)}_{H^{0,1}} \leq C \varepsilon 
\end{align*}
by Proposition \ref{iv:pro1}. 
Combining these above with Proposition \ref{iv:pro1}, we reach to
\begin{align}
	\begin{aligned}
	\norm{\cF U(-t)u(t)}_{L^{\infty}} ={}& \norm{\cF w(t)}_{L^{\infty}} \\
%	\leq{}& C \int_1^{t} \l( \norm{I_1(\tau)}_{L^{\infty}} + \norm{I_2(\tau)}_{L^{\infty}} \r) \frac{d\tau}{\tau} \\
	\leq{}& \norm{\mathcal{F}U(-1)u(1)}_{L^\infty}+ C \int_1^{t} \tau^{-\alpha-1} \norm{U(-\tau)u(\tau)}_{H^{0,1}}^3\, d\tau \\
	\leq{}&C\varepsilon+ C\varepsilon^3 \int_1^{t} \tau^{-\alpha-1+3A\eps}\, d\tau \\
%	\leq{}& C_{\eps_1, \alpha} \norm{u_0}_{H_c^{1,0} \cap H^{0,1}} \l( 1- t^{-\alpha +3A\eps_1} \r) \\
	\leq{}& C_{\eps_1, \alpha} \varepsilon
	\end{aligned}
	\label{iv:9}
\end{align}
for any $|t| \geq 1$. 
%Here note that $\norm{u_0}_{H_c^{1,0} \cap H^{0,1}} \leq 1$ and we may take $\eps_1 \in (0,1)$ such that $-\alpha +3A\eps_1 <0$ when constructing the local solution. 
Therefore collecting \eqref{iv:8} and \eqref{iv:9}, we conclude
\begin{align*}
	(1+|t|)^{\frac12} \norm{u(t)}_{L^{\infty}} \leq C ( 1+ t^{-\alpha + A\eps})\varepsilon \leq C\varepsilon
\end{align*}
for any $t \geq 1$. This completes the proof. 
\end{proof}

%As in the proof of Proposition \ref{iv:pro1}, Proposition \ref{iv:pro2} tells us the following:
%
%\begin{proposition} \label{iv:pro3}
%%Fix $t_0 \in \R$.
%%Fix $\alpha \in (0, \frac14)$. Set $\eps_1 \in (0, \min(1, \alpha (3A)^{-1}))$.
%%Assume that $u_0 \in H_c^{1,0}(\cG) \cap H^{0,1}(\cG)$ with $\norm{u_0}_{H^{1,0}_c \cap H^{0,1}} =: \eps  \leq \eps_1$.
%Under the same assumption as in Proposition \ref{iv:pro2}, the estimate
%\begin{align*}
%	&{}(1+|t|)^{-A\eps} \l( \norm{u(t)}_{H_c^{1,0}} + \norm{U(-t)u(t)}_{H^{0,1}} \r) \leq C \norm{u_0}_{H_c^{1,0} \cap H^{0,1}} =:C\eps
%\end{align*}
%is valid for any $t \in [0,T]$.%, where $\eps_1 \in (0,1)$ is defined in Theorem \ref{iv:1}.
%\end{proposition}

\begin{proof}[Proof of Proposition \ref{prop4.5}]
The consequence immediately follows from Proposition \ref{iv:pro2} and Proposition \ref{iv:pro1}.
\end{proof}

Let us move on to the proof of Theorem \ref{iv:thm3}.
\begin{proof}[Proof of Theorem \ref{iv:thm3}]
We shall first show \eqref{iv:thm4}.
Combining \eqref{iv:12} with \eqref{iv:10} and \eqref{iv:11}, we see from Proposition \ref{iv:pro1} that 
\begin{align}
	\begin{aligned}
	\norm{\cF w(t) - \cF w(s)}_{L^{\infty}} \leq{}& C \int_{s}^{t} \l( \norm{I_1(\tau)}_{L^{\infty}} + \norm{I_2(\tau)}_{L^{\infty}} \r) \frac{d \tau}{\tau} \\
	\leq{}& C \int_s^t \tau^{-\alpha-1} \norm{v(\tau)}_{H^{0,1}}^3\, d\tau \\
	\leq{}& C \eps^3 \int_s^t \tau^{-\alpha-1+3A\eps}\, d\tau \\
	\leq{}& C \eps \l( t^{-\alpha+3A\eps} + s^{-\alpha+3A\eps} \r) \to 0
	\end{aligned}
	\label{iv:13}
\end{align}
as $t$, $s \to \infty$. Using Lemmas \ref{lem4.5} and \ref{iv:0}, %the mean value theorem, 
one also estimates
\begin{align*}
	\norm{I_1(t)}_{L^{2}} \cleq{}& t^{-\frac12} \norm{X \mathcal{F}^{-1} (|\mathcal{F}\mathcal{M}(t)v|^2\mathcal{F}\mathcal{M}(t)v)}_{L^{2}} \\
	\cleq{}& t^{-\frac12} \norm{\mathcal{F}_{c}^{-1} \partial_x (|\mathcal{F}\mathcal{M}(t)v|^2\mathcal{F}\mathcal{M}(t)v)}_{L^{2}} \\
	\cleq{}& t^{-\frac12} \norm{v}_{H^{0,1}}^3, \\
	\norm{I_2(t)}_{L^{2}} \cleq{}& \l( \norm{\mathcal{F}\mathcal{M}(t)v}_{L^{\infty}}^2 + \norm{\mathcal{F}v}_{L^{\infty}}^2 \r) \norm{\mathcal{F}(\mathcal{M}(t)-1)v}_{L^{2}} \\
	\cleq{}& t^{-\frac12} \norm{v}_{H^{0,1}}^3.
\end{align*}
Hence it holds that
\begin{align}
	\begin{aligned}
	\norm{\cF w(t) - \cF w(s)}_{L^2} \leq{}& C \int_{s}^{t} \l( \norm{I_1(\tau)}_{L^{2}} + \norm{I_2(\tau)}_{L^{2}} \r) \frac{d \tau}{\tau} \\
	\leq{}& C \int_s^t \tau^{-\frac{3}{2}} \norm{v(\tau)}_{H^{0,1}}^3\, d\tau \\
%	\leq{}& C \eps^3 \int_s^t \tau^{-2+3A\eps}\, d\tau \\
	\leq{}& C \eps \l( t^{-\frac12+3A\eps} + s^{-\frac12+3A\eps} \r) \to 0
	\end{aligned}
	\label{iv:13a}
\end{align}
as $t$, $s \to \infty$. 
Therefore, there exists $W \in L^{2}(\cG) \cap L^{\infty}(\cG)$ such that 
\[
	\norm{\cF w(t) - W}_{L^{2} \cap L^{\infty}} \to 0
\]
as $t \to \infty$. Taking $s \to \infty$, \eqref{iv:13} and \eqref{iv:13a} reach to
\begin{align}
	\norm{\cF w(t) - W}_{L^{2} \cap L^{\infty}} \leq C \eps t^{-\alpha +3A \eps}, \label{iv:14}
\end{align}
since $\alpha<1/4$, which implies 
\begin{align}
	\norm{\mathcal{B}(t) \cF U(-t)u(t)  - W}_{L^{2} \cap L^{\infty}} \leq C \eps t^{-\alpha +3A \eps}. \label{iv:15}
\end{align}
Thus we have \eqref{iv:thm4}. Let us prove \eqref{iv:thm5}.
We here define
\[
	\Theta(t) := \frac{\lambda}2 \int_1^t \l( |\cF w(\tau)|^2 - |\cF w(t)|^2 \r) \frac{d \tau}{\tau}.
\]
A direct computation shows
\begin{align}
	\begin{aligned}
	\Theta(s) - \Theta(t) ={}& \frac{\lambda}2 \int_t^s \l( |\cF w(\tau)|^2 - |\cF w(s)|^2 \r) \frac{d \tau}{\tau} \\
	&{}+ \frac{\lambda}2 \l( |\cF w(t)|^2 - |\cF w(s)|^2 \r) \log t
	\end{aligned}
	\label{iv:16}
\end{align}
for any $1<t<\tau<s$. Since we see from \eqref{iv:9} and \eqref{iv:13} that 
\begin{align*}
	\l| |\cF w(\tau)|^2 - |\cF w(s)|^2 \r| \leq{}&( |\cF w(\tau) |+| \cF w(s)|)|\cF w(\tau) - \cF w(s)| \\
	\leq{}& C \eps \l( \tau^{-\alpha+3A\eps} + s^{-\alpha+3A\eps} \r) \\
	\leq{}& C \eps \tau^{-\alpha+3A\eps}
\end{align*}
for any $\tau < s$, \eqref{iv:16} implies that
\begin{align}
	\begin{aligned}
	|\Theta(s) - \Theta(t)| \leq{}& C \eps \int_t^s \tau^{-\alpha+3A\eps-1}\, d\tau + C \eps t^{-\alpha+3A\eps} \log t \\
	\leq{}& C \eps \l( t^{-\alpha+3A\eps} - s^{-\alpha+3A\eps}  \r) + C \eps t^{-\alpha+3A\eps }\log t
	\end{aligned}
	\label{iv:17}
\end{align}
for any $t<s$. Hence there exists a real valued fonction $\Psi \in L^{\infty}(\cG)$ such that $\norm{\Theta(t) - \Psi}_{L^{\infty}} \to 0$ as $t \to \infty$. As for \eqref{iv:17}, taking $s \to \infty$, 
\begin{align}
	\norm{\Psi - \Theta(t)}_{L^{\infty}} \leq  C \eps t^{-\alpha+3A\eps} \log t. \label{iv:18}
\end{align}
By the definition of $\Theta(t)$, we have
%and \eqref{iv:16}, taking $s \to \infty$, we have
\begin{align*}
	&\frac{\lambda}2 \int_1^t |\cF w(\tau)|^2 \frac{d \tau}{\tau} - \frac{\lambda}2 |W|^2 \log t - \Psi 
	\\
	&= (\Theta(t) - \Psi) + \frac{\lambda}{2} \l( |\cF w(t)|^2 - |W|^2 \r) \log t.
\end{align*}
Thus, one sees from \eqref{iv:9}, \eqref{iv:14}, and \eqref{iv:18} that
\begin{align*}
	&{}\norm{\frac{\lambda}2 \int_1^t |\cF w(\tau)|^2 \frac{d \tau}{\tau} - \frac{\lambda}2 |W|^2 \log t - \Psi}_{L^{\infty}} \\ 
	&\leq \norm{\Theta(t) - \Psi}_{L^{\infty}} + C \left(\norm{\cF w(t)}_{L^\infty} +\norm{ W}_{L^{\infty}}\right) \norm{\cF w(\tau) - W}_{L^{\infty}} \log t \\
	&\leq C \eps t^{-\alpha+3A\eps} \log t,% + C \eps t^{-\alpha +3A \eps} \log t \\
%	\leq{}& C \eps t^{-\alpha+3A\eps+\delta}.
\end{align*}
where we note that $\| W\|_{L^{\infty}} \leq C \varepsilon$ by \eqref{iv:9} and \eqref{iv:14}. 
Thus \eqref{iv:thm5} has been proven. Further, noting $|1- e^{i\theta}| \cleq |\theta|$ for any $\theta \in \mathbb{R}$, combining \eqref{iv:15} with \eqref{iv:thm5}, it holds that 
\begin{align*}
	&{}\norm{(\cF v)_j(t) - \exp \left(  i \frac{\lambda}2 |W_j|^2 \log t + i \Psi_j \right) W_j}_{L^{\infty} \cap L^2(e_j)} \\
	&\leq \norm{(\cF v)_j(t) - \exp \left( \frac{\lambda}{2} i\int_{1}^{t} |(\mathcal{F}v)_j|^2 \frac{d\tau}{\tau} \right) W_j}_{L^{\infty} \cap L^2(e_j)} \\
	&\quad + \norm{\left\{\exp \left(  i \frac{\lambda}2 |W_j|^2 \log t + i \Psi_j - \frac{\lambda}{2} i\int_{1}^{t} |(\mathcal{F}v)_j|^2 \frac{d\tau}{\tau} \right) -1\right\} W_j}_{L^{\infty} \cap L^2(e_j)} \\
	&\leq C\eps t^{-\alpha+3A\eps} + C \norm{\frac{\lambda}2 |W_j|^2 \log t + i \Psi_j - \frac{\lambda}{2} i\int_{1}^{t} |(\mathcal{F}v)_j|^2 \frac{d\tau}{\tau}}_{L^{\infty}(e_j)}\norm{W_j}_{L^{\infty} \cap L^2(e_j)} \\
	&\leq C\eps t^{-\alpha+3A\eps} + C\eps t^{-\alpha+3A\eps}\log t 
	\\
	&\leq C\eps t^{-\alpha+3A\eps} \log t
\end{align*}
for any $t \geq 1$ and each $j = 1, 2, \ldots, n$. Hence we conclude \eqref{iv:thm6}. 
In terms of the asymptotic formula \eqref{iv:thm7}, from \eqref{iv:thm6} and the estimate in Proposition \ref{iv:pro1}, it is established that  
\begin{align*}
	u(t) %={}& U(t)U(-t)u(t) \\
	={}& \cM \cD \cF v(t) + \cM \cD \cF (\cM -1) v(t) \\
	={}& \cM \cD \exp \left(  i \frac{\lambda}2 |W|^2 \log t + i \Psi \right) W \\
	&{}+ \cM \cD \l( \cF v(t) - \exp \left(  i \frac{\lambda}2 |W|^2 \log t + i \Psi \right) W \r) + \cM \cD \cF (\cM -1) v(t) \\
	={}& \frac{1}{(2it)^{\frac12}} W\l( \frac{x}{2t} \r) \exp \left( \frac{i|x|^2}{4t} + i \frac{\lambda}2 \l|W\l( \frac{x}{2t} \r)\r|^2 \log t + i \Psi\l( \frac{x}{2t} \r) \right)  \\
	&{}+ O\l( \eps t^{-\frac12 -\alpha +3A\eps } \log t \r) + O\l( \eps t^{-\frac12 -\alpha+A\varepsilon} \r).
\end{align*}
This completes the proof.
\end{proof}

%%%%%%%%%%%%%%%%%%%%%%%%%%%%%%%%%%%%%%%%%%%%%%%%%%%%%%%%%%%%%%%%%%%%%
\subsection{Failure of the scattering for $1 \le p \le 2$}
\label{sec6}

In this section, we show the failure of scattering for \eqref{NLS^p} when $1<p\leq 2$. 
Our proof is based on the standard argument by Cazenave \cite{Caz}.

To prove Theorem \ref{thm2.4}, we derive a contradiction supposing $v_{+}\neq 0$. Set $F(\varphi):=\lambda |\varphi|^p \varphi$. 
Let $\varphi$ be as in \cite[Lemma 8]{AIM20}. Namely, there exists $\delta>0$ such that $\tbra{F(\mathcal{F} v_+)}{\mathcal{F}\varphi}_{L^2(e_j)} < -\delta$. 
Multiplying $\overline{w}:=\overline{e^{it\Delta_{K}}\varphi}$ to \eqref{NLS^p}, integrating it on $e_j$, and taking the summation for $j$, we obtain the following weak formula:
\begin{align}
\label{eq4.29}
	i \frac{d}{dt} \tbra{u}{w}_{L^2(\mathcal{G})}  +\tbra{F(u)}{w}_{L^2(\mathcal{G})}=0.
\end{align}

\begin{remark}
Before taking the summation, we have the term 
\begin{align*}
	u_j(t,0+) \overline{\partial_x w_j (t,0+)} +  \partial_x u_j(t,0+) \overline{ w_j (t,0+)}
\end{align*}
from the boundary. 
Taking the summation, this term disappears, namely, 
\begin{align*}
	\sum_{j=1}^{n} \left( u_j(t,0+) \overline{\partial_x w_j (t,0+)} +  \partial_x u_j(t,0+) \overline{ w_j (t,0+)}\right)=0
\end{align*}
since $u$ and $w$ satisfiy the Kirchhoff boundary condition.  
\end{remark}

Now, setting $\widetilde{u}=\mathcal{D}^{-1}\mathcal{M}^{-1}u$, and $\widetilde{w}=\mathcal{D}^{-1}\mathcal{M}^{-1}w$, then we have 
\begin{align*}
	\tbra{F(u)}{w}_{L^2(\mathcal{G})} = t^{-\frac{p}{2}} \tbra{F(\widetilde{u})}{\widetilde{w}}_{L^2(\mathcal{G})}
\end{align*}
since $F$ is gauge invariant. We note that $\widetilde{w}=\mathcal{F}\mathcal{M} \varphi$ by the factorization formula of $e^{it\Delta_{K}}$. 

Taking real part and integrating \eqref{eq4.29} on time interval $[T,\tau]$, where $T$ is large enough, 
\begin{align}
\label{eq6.1}
	\int_{T}^{\tau} \re (i \partial_t \tbra{u}{w}_{L^2(\mathcal{G})}) dt 
	\cgeq  - \int_{T}^{\tau}  t^{-\frac{p}{2}} \re \tbra{F(\widetilde{u})}{\widetilde{w}}_{L^2(\mathcal{G})} dt.
\end{align}
The left hand side is bounded for $\tau$ since the $L^2$-norms of $u$ and $w$ conserve (see \cite{GrIg}). Therefore, we will show that the right hand side is unbounded if $v_{+}\neq 0$. 
We have the following lemma.

\begin{lemma}
\label{lem6.3}
Let $1\leq p\leq 2$. If $u$ is a forward-global solution of \eqref{NLS^p} satisfying
\begin{align*}
	\left\| e^{-it\Delta_{K}} u\left( t \right) - v_{+} \right\|_{\Sigma\left( \mathcal{G} \right)} \to 0 \quad \left( t \to \infty \right)
\end{align*}
for some  $v_{+} \in \Sigma\left( \mathcal{G} \right)$, then 
\begin{align}
\label{eq3.3}
	F\left( \widetilde{u} \right)= F\left( \mathcal{F} v_{+} \right) + o\left( 1 \right) \text{ in } L^{1} \left( \mathcal{G}\right) \text{ as } t \to \infty.
\end{align}
%where we set $F(\varphi):=\lambda |\varphi|^{p}\varphi$ and $\widetilde{u}=\mathcal{D}^{-1}\mathcal{M}^{-1}u$.
\end{lemma}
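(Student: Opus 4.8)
The plan is to pass everything to the ``profile'' variable $U(-t)u(t)$ via the factorization formula of Proposition \ref{prop4.1}, and then to show that $\Sigma$-convergence of $U(-t)u(t)$ to $v_+$ forces convergence of $\widetilde u$ to $\mathcal{F}v_+$ in $L^{p+1}(\mathcal{G})$, after which \eqref{eq3.3} follows from the Lipschitz-type bound for $\varphi\mapsto|\varphi|^p\varphi$.

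First I would record that $u(t)=U(t)\bigl(U(-t)u(t)\bigr)=\mathcal{MDFM}\bigl(U(-t)u(t)\bigr)$, whence
$\widetilde u=\mathcal{D}^{-1}\mathcal{M}^{-1}u(t)=\mathcal{F}\mathcal{M}\bigl(U(-t)u(t)\bigr)$
(and, as already noted in the text, $\widetilde w=\mathcal{F}\mathcal{M}\varphi$). Since $F(\varphi)=\lambda|\varphi|^p\varphi$, the elementary inequality $\bigl||a|^pa-|b|^pb\bigr|\cleq(|a|^p+|b|^p)|a-b|$ together with H\"older's inequality (using $\tfrac{p}{p+1}+\tfrac1{p+1}=1$) gives
\[
\norm{F(\widetilde u)-F(\mathcal{F}v_+)}_{L^1(\mathcal{G})}\cleq\bigl(\norm{\widetilde u}_{L^{p+1}(\mathcal{G})}^p+\norm{\mathcal{F}v_+}_{L^{p+1}(\mathcal{G})}^p\bigr)\norm{\widetilde u-\mathcal{F}v_+}_{L^{p+1}(\mathcal{G})}.
\]
Hence it suffices to bound $\norm{\widetilde u}_{L^{p+1}}$ uniformly in $t$ (as $t\to\infty$) and to show $\norm{\widetilde u-\mathcal{F}v_+}_{L^{p+1}}\to 0$, noting that $p+1\in[2,3]$ because $1\le p\le 2$.

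For the uniform bound, $\mathcal{M}$ is an isometry on every $L^q(\mathcal{G})$ and $\mathcal{F}$ is unitary on $L^2(\mathcal{G})$ (Lemma \ref{lem4.1}), so $\norm{\widetilde u}_{L^2}=\norm{U(-t)u(t)}_{L^2}$; moreover, by the Hausdorff--Young inequality (Lemma \ref{HY}) and the embedding $H^{0,1}(\mathcal{G})\hookrightarrow L^1(\mathcal{G})$ (Cauchy--Schwarz against the weight $\langle x\rangle^{-1}$),
$\norm{\widetilde u}_{L^\infty}\cleq\norm{\mathcal{M}U(-t)u(t)}_{L^1}=\norm{U(-t)u(t)}_{L^1}\cleq\norm{U(-t)u(t)}_{\Sigma}$.
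Since $U(-t)u(t)\to v_+$ in $\Sigma(\mathcal{G})$, these quantities are bounded, so $\widetilde u$ is bounded in $L^2(\mathcal{G})\cap L^\infty(\mathcal{G})\hookrightarrow L^{p+1}(\mathcal{G})$, and likewise $\mathcal{F}v_+\in L^q(\mathcal{G})$ for all $q\in[2,\infty]$. For the convergence, unitarity again yields $\norm{\widetilde u-\mathcal{F}v_+}_{L^2}=\norm{\mathcal{M}U(-t)u(t)-v_+}_{L^2}\le\norm{U(-t)u(t)-v_+}_{L^2}+\norm{(\mathcal{M}-1)v_+}_{L^2}$, where the first term vanishes by hypothesis and the second is $\cleq t^{-1/2}\norm{Xv_+}_{L^2}\to0$ by Lemma \ref{lem4.5} with $\alpha=1/2$, since $v_+\in H^{0,1}(\mathcal{G})$. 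Interpolating between $L^2$ and $L^\infty$,
\[
\norm{\widetilde u-\mathcal{F}v_+}_{L^{p+1}}\le\norm{\widetilde u-\mathcal{F}v_+}_{L^2}^{\frac2{p+1}}\,\norm{\widetilde u-\mathcal{F}v_+}_{L^\infty}^{1-\frac2{p+1}}\longrightarrow 0,
\]
the $L^\infty$-factor being uniformly bounded by the previous step. Inserting both into the displayed estimate gives \eqref{eq3.3}.

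I do not expect a serious obstacle here; the one point requiring care is that $\mathcal{F}$ is \emph{not} bounded on $L^q(\mathcal{G})$ for $q\ne 2$ in a manner compatible with the $\Sigma$-structure (cf.\ the Remark after Lemma \ref{lem4.4}), so one cannot push $\Sigma$-convergence of $U(-t)u(t)$ directly through $\mathcal{F}\mathcal{M}$. The device, as above, is to exploit the exact $L^2$-isometry of $\mathcal{F}\mathcal{M}$ for the convergence, Hausdorff--Young for a uniform $L^\infty$-bound, and interpolation between the two; the hypothesis $1\le p\le 2$ enters precisely to ensure $p+1\le 3$ (so that $L^2\cap L^\infty\hookrightarrow L^{p+1}$ and the interpolation exponent $\tfrac2{p+1}$ is positive). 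Dealing with the oscillatory multiplier $\mathcal{M}$ costs only its $L^q$-isometry and the decay estimate of Lemma \ref{lem4.5}.
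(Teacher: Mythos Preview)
Your argument is correct. It differs from the paper's proof mainly in the choice of H\"older exponents and in how the convergence of $\widetilde u$ to $\mathcal{F}v_+$ is obtained. The paper splits the $L^1$-norm as $L^{2/(2-p)}\times L^{2/p}$, placing the difference in $L^{2/(2-p)}$ and the powers in $L^2$; it then upgrades to $H^1$ by invoking Lemma~\ref{lem4.4} ($\partial_x\mathcal{F}=-i\mathcal{F}_cX$, which needs no continuity at the vertex) together with the Sobolev embedding $H^1\hookrightarrow L^{2/(2-p)}$, reducing matters to the $H^{0,1}$-convergence of $e^{-it\Delta_K}u$ to $v_+$. You instead put everything in $L^{p+1}$, get $L^2$-convergence of $\widetilde u-\mathcal{F}v_+$ directly from the unitarity of $\mathcal{F}$ and $\mathcal{M}$, supply a uniform $L^\infty$-bound via Hausdorff--Young, and interpolate. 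Your route is more elementary in that it avoids the co-Fourier identity entirely; the paper's route yields the stronger intermediate statement $\widetilde u\to\mathcal{F}v_+$ in $H^1(\mathcal{G})$. One small inaccuracy in your closing remarks: the constraint $p\le2$ (i.e.\ $p+1\le3$) is not actually used in your argument, since $L^2\cap L^\infty\hookrightarrow L^{p+1}$ and the interpolation exponent $2/(p+1)>0$ hold for all $p\ge1$; only the lower bound $p\ge1$ is needed to ensure $p+1\ge2$.
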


\begin{proof}
By the H\"{o}lder inequality, we get
\begin{align*}
\left\| F\left( \widetilde{u} \right) -F\left( \mathcal{F}v_{+} \right) \right\|_{L^{1}}
	&\leq C\left\| \left| \widetilde{u} - \mathcal{F}v_{+} \right| \left( \left| \widetilde{u} \right|^{p} + \left| \mathcal{F}v_{+} \right|^{p} \right) \right\|_{L^{1}}\\
	&\leq C\left\| \widetilde{u} - \mathcal{F}v_{+} \right\|_{L^{\frac{2}{2-p}}}\left( \left\| \widetilde{u} \right\|_{L^{2}}^{p} + \left\| \mathcal{F}v_{+} \right\|_{L^{2}}^{p} \right)\\
	&\leq C\left\| \widetilde{u} - \mathcal{F}v_{+} \right\|_{H^{1}}\left( \left\| e^{-it\Delta_{K}}u - v_{+} \right\|_{L^{2}}^{p} + \left\| v_{+} \right\|_{L^{2}}^{p} \right).
\end{align*}
Since $e^{-it\Delta_{K}}=\mathcal{M}^{-1}\mathcal{F}^{-1}\mathcal{D}^{-1}\mathcal{M}^{-1}$, we obtain 
\begin{align*}
\left\| \widetilde{u} - \mathcal{F} v_{+} \right\|_{H^{1}}
	&\leq C\left\| \left( 1+\partial_{x} \right)\mathcal{F}\left( \mathcal{F} ^{-1}\mathcal{D}^{-1}\mathcal{M}^{-1}u - v_{+} \right) \right\|_{L^{2}}\\
	&\leq C\left\| \left(\mathcal{F}-i \mathcal{F}_{c}X \right) \left( \mathcal{F}^{-1}\mathcal{D}^{-1}\mathcal{M}^{-1}u - v_{+} \right) \right\|_{L^{2}}\\
	&\leq C\left\| \mathcal{M}^{-1}\mathcal{F}^{-1}\mathcal{D}^{-1}\mathcal{M}^{-1}u - \mathcal{M}^{-1}v_{+} \right\|_{H^{0, 1}}\\
	&\leq C\left\| e^{-it\Delta_{K}}u - v_{+} \right\|_{H^{0, 1}} + \left\| \left( 1-\mathcal{M}^{-1} \right) v_{+} \right\|_{H^{0, 1}}.
\end{align*}
By the assumption, we have $\left\| e^{-it\Delta_{K}}u - v_{+} \right\|_{H^{0, 1}} \to 0$ as $t \to \infty$. 
We also have $\| ( 1-\mathcal{M}^{-1} )v_{+}\|_{H^{0, 1}} \to 0$ as $t \to \infty$ by the Lebesgue dominated convergence theorem. Thus, we get
\begin{align*}
\left\| \widetilde{u} - \mathcal{F}v_{+} \right\|_{H^{1}} \to 0 \quad \text{as} \quad t \to 0.
\end{align*}
Therefore, we find
\begin{align*}
\left\| F\left( \widetilde{u} \right) -F\left( \mathcal{F}v_{+} \right) \right\|_{L^{1}}\to 0 \quad \text{as} \quad t \to 0. 
\end{align*}
This completes the proof.
\end{proof}

By the same method as in  \cite[Lemma 4]{AIM20}, we obtain
\begin{align*}
	\widetilde{w} = \mathcal{F} \varphi + o(1) \text{ in }   L^{\infty} (\mathcal{G}) \text{ as } t \to \infty.
\end{align*}
%$\widetilde{w}=\mathcal{F}\mathcal{M} \varphi$. Thus, it follows that
%\begin{align*}
%	\norm{\widetilde{w} - \mathcal{F}\varphi}_{L^{r}(e_j)} 
%	=\norm{\mathcal{F} (M-1) \varphi}_{L^{r}(e_j)} 
%	\leq \norm{(M-1)\varphi}_{L^{r'}(e_j)}
%	\cleq \frac{1}{|t|} \norm{|x|^2 \varphi}_{L^{r'}(e_j)},
%\end{align*}
%uniformly in $t$ for $2\leq r \leq \infty$
Combining this with  Lemma \ref{lem6.3}, we obtain 
\begin{align*}
	\tbra{F(\widetilde{u})}{\widetilde{w}}_{L^2(\mathcal{G})} = \tbra{F(\mathcal{F} v_+)}{\mathcal{F}\varphi}_{L^2(\mathcal{G})} + o(1) \text{ as } t \to \infty.
\end{align*}
Indeed, 
\begin{align*}
	&|\tbra{F(\widetilde{u})}{\widetilde{w}}_{L^2(\mathcal{G})} - \tbra{F(\mathcal{F} v_+)}{\mathcal{F}\varphi}_{L^2(\mathcal{G})} |
%	\\
%	&
%	\leq |\tbra{F(\widetilde{u}) - F(\mathcal{F} v_+ )}{\widetilde{w}}_{L^2(\mathcal{G})} |
%	+|\tbra{F(\mathcal{F} v_+)}{\widetilde{w}-\mathcal{F}\varphi}_{L^2(\mathcal{G})} |
	\\
	&\leq  \| F(\widetilde{u}) - F(\mathcal{F} v_+ ) \|_{L^1} \| \widetilde{w}\|_{L^\infty}
	+\| F(\mathcal{F} v_+)\|_{L^1} \| \widetilde{w}-\mathcal{F}\varphi\|_{L^\infty}
	\\
	& \to 0 
\end{align*}
as $t \to \infty$ since $\| \widetilde{w}\|_{L^\infty} = \|\mathcal{F}\mathcal{M} \varphi \|_{L^\infty}\leq \|\varphi\|_{L^1}$ and $\| F(\mathcal{F} v_+)\|_{L^1} =\|\mathcal{F} v_+\|_{L^{p+1}} \leq \|v_+\|_{L^{1+1/p}} \leq \|v_+\|_{\Sigma}$. 
Moreover, by the definition of $\varphi$, there exists $\delta>0$ such that
\begin{align*}
	\re \tbra{F(\widetilde{u})}{\widetilde{w}}_{L^2(e_j)} < -\delta
\end{align*}
for large $t>T$. Thus, \eqref{eq6.1} implies that
\begin{align*}
	C \cgeq \delta \int_{T}^{\tau} t^{-\frac{p}{2}} dt  \to \infty
\end{align*}
as $\tau \to \infty$ when $p\leq 2$. 
This derives a contradiction. We complete the proof of Theorem \ref{thm2.4}.  

%%%%Appendix%%%%%
\appendix

\section{Proof of Preliminaries}
\label{appA}

The Schr\"{o}dinger evolution group with the Kirchhoff boundary condition
is obtained by \cite{AdCaFiNo11} (see also \cite{GrIg}) as follows:
\begin{align}
\label{eqA.1}
	U(t)=e^{it\Delta_K} =
	(\mathcal{U}_{t}^{-} - \mathcal{U}_{t}^{+})  I_n
	+\frac{2}{n}
	\mathcal{U}_t^{+} J_n,
\end{align}
where 
\begin{equation*}
[\mathcal{U}_t^\pm f](x) :=\int_0^\infty \frac{1}{\sqrt{4\pi it}}e^{\frac{i|x\pm y|^2}{4t}} f(y) dy. 
% \int_0^\infty \phi_t(x\pm y)f(y) dy, 
%\quad 
%\phi_t(x) := \frac{1}{\sqrt{4\pi it}}e^{\frac{i|x|^2}{4t}}.
\end{equation*}
This formula and the definition of the Fourier transform $\mathcal{F}$ imply the factorization formula in Proposition \ref{prop4.1} as follows.

\begin{proof}[Proof of Proposition \ref{prop4.1}]
First of all, we have 
\begin{align*}
	[\mathcal{U}_t^\pm f](x) 
%	&= \int_0^\infty \frac{1}{\sqrt{4\pi it}}e^{\frac{i|x\pm y|^2}{4t}} f(y) dy
%	\\
%	&
	=e^{\frac{i|x|^2}{4t}}  \frac{1}{\sqrt{2it}} \frac{1}{\sqrt{2\pi}} \int_0^\infty e^{\pm iy \frac{x}{2t}}  e^{\frac{i|y|^2}{4t}} f(y) dy
%	\\
%	&
	=[MD\mathcal{F}^{\pm}Mf](x).
\end{align*}
Therefore, it holds from \eqref{eqA.1}, this factorization formula, and the definition of the Fourier transform $\mathcal{F}$ that $U(t)=e^{it\Delta_K} = \mathcal{M} \mathcal{D} \mathcal{F} \mathcal{M}$.
\end{proof}

We will show Lemma \ref{lem4.2}, which follows from the following lemmas. 

\begin{lemma}
\label{lemA.3}
For $\varphi \in H^1(\mathbb{R}_{+})$, 
we have
\begin{align*}
	x\mathcal{F}^{\pm}\varphi=\frac{\pm i}{\sqrt{2\pi}}  \varphi(0)
	\pm i \mathcal{F}^{\pm} (\partial_{x}\varphi)
\end{align*}
and equivalently
\begin{align*}
	\mathcal{F}^{\pm}(\partial_{x}\varphi)
	= \mp i x\mathcal{F}^{\pm}\varphi -\frac{1}{\sqrt{2\pi}}  \varphi(0).
\end{align*}
\end{lemma}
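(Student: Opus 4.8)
The plan is to establish the identity first for smooth, compactly supported $\varphi$ by a single integration by parts, and then pass to general $\varphi\in H^1(\mathbb{R}_+)$ by density. Since the two displayed formulas are equivalent — multiplying $\mathcal{F}^\pm(\partial_x\varphi)=\mp i x\mathcal{F}^\pm\varphi-(2\pi)^{-1/2}\varphi(0)$ by $\pm i$ and using $(\pm i)(\mp i)=1$ yields $x\mathcal{F}^\pm\varphi=\pm i(2\pi)^{-1/2}\varphi(0)\pm i\mathcal{F}^\pm(\partial_x\varphi)$ — it suffices to prove the second one.

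First I would record the reduction to $\varphi\in C_c^\infty([0,\infty))$. This class is dense in $H^1(\mathbb{R}_+)$; the trace map $\varphi\mapsto\varphi(0)$ is continuous from $H^1(\mathbb{R}_+)$ to $\mathbb{C}$ (by the Sobolev embedding $H^1(\mathbb{R}_+)\hookrightarrow C_0([0,\infty))$, which also gives $\varphi(x)\to 0$ as $x\to\infty$); $\varphi\mapsto\partial_x\varphi$ is continuous from $H^1(\mathbb{R}_+)$ to $L^2(\mathbb{R}_+)$; and $\mathcal{F}^\pm$ is bounded on $L^2(\mathbb{R}_+)$. Hence the right-hand side of the desired identity depends continuously on $\varphi\in H^1(\mathbb{R}_+)$ with values in $L^2$. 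If $\varphi_k\to\varphi$ in $H^1$, then $\mathcal{F}^\pm\varphi_k\to\mathcal{F}^\pm\varphi$ in $L^2$, hence $x\mathcal{F}^\pm\varphi_k\to x\mathcal{F}^\pm\varphi$ in the sense of distributions; since the right-hand sides converge in $L^2$, it follows that $x\mathcal{F}^\pm\varphi\in L^2$ and that the identity, once known on $C_c^\infty([0,\infty))$, persists in the limit.

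Next, for $\varphi\in C_c^\infty([0,\infty))$ I would integrate by parts:
\begin{align*}
[\mathcal{F}^\pm(\partial_x\varphi)](\xi)
&=(2\pi)^{-1/2}\int_0^\infty e^{\pm ix\xi}\varphi'(x)\,dx\\
&=(2\pi)^{-1/2}\Big(\big[e^{\pm ix\xi}\varphi(x)\big]_{x=0}^{x=\infty}\mp i\xi\int_0^\infty e^{\pm ix\xi}\varphi(x)\,dx\Big).
\end{align*}
The boundary term at $x=\infty$ vanishes because $\varphi$ has compact support (in the general case, because $\varphi(x)\to0$), and at $x=0$ it equals $-(2\pi)^{-1/2}\varphi(0)$. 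This gives $[\mathcal{F}^\pm(\partial_x\varphi)](\xi)=-(2\pi)^{-1/2}\varphi(0)\mp i\xi[\mathcal{F}^\pm\varphi](\xi)$, i.e. $\mathcal{F}^\pm(\partial_x\varphi)=\mp i x\mathcal{F}^\pm\varphi-(2\pi)^{-1/2}\varphi(0)$, and then the algebraic manipulation above yields the first identity. Combining with the density reduction completes the proof.

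There is no deep obstacle here; the only points requiring care are the vanishing of the boundary term at infinity — handled by $H^1(\mathbb{R}_+)\hookrightarrow C_0([0,\infty))$ — and the fact that $\partial_x\varphi$ need not lie in $L^1(\mathbb{R}_+)$, so that $\mathcal{F}^\pm(\partial_x\varphi)$ and $x\mathcal{F}^\pm\varphi$ must be interpreted as $L^2$ functions and the identity proved on the dense subclass first, exactly as above.
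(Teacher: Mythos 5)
Your proof is correct and follows essentially the same route as the paper: a single integration by parts transferring the derivative between $\varphi$ and $e^{\pm ix\xi}$, with the boundary term at the origin producing the $\varphi(0)$ contribution. The only difference is that you add a density/continuity reduction to $C_c^\infty([0,\infty))$ to justify the formal computation for general $H^1$ data, which the paper omits; this is a harmless (and slightly more careful) refinement rather than a different argument.
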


\begin{proof}
By integration by parts, we see that
\begin{align*}
	x\mathcal{F}^{\pm}\varphi
%	&=\frac{1}{\sqrt{2\pi}} \int_{0}^{\infty} x e^{\pm ix y} 
%	\varphi(y) dy
%	\\
	&=\frac{\mp i}{\sqrt{2\pi}} \int_{0}^{\infty} \partial_{y} (e^{\pm ixy }) \varphi(y) dy
	\\
	&=\frac{\mp i}{\sqrt{2\pi}} \left[  e^{\pm ixy} \varphi(y) \right]_{y=0}^{y=\infty}
	-\frac{\mp i}{\sqrt{2\pi}}  \int_{0}^{\infty} e^{\pm ixy}  (\partial_{x} \varphi)(y) dy
	\\
	&=\frac{\pm i}{\sqrt{2\pi}}  \varphi(0)
	\pm i \mathcal{F}^{\pm} (\partial_{x}\varphi).
\end{align*}
This completes the proof. 
\end{proof}

\begin{proof}[Proof of Lemma \ref{lem4.2}]
For $\varphi \in H_{c}^{1}(\mathcal{G})$, it holds from Lemma \ref{lemA.3} that
\begin{align*}
	(X\mathcal{F}^{-1} \varphi)_{j}
	&=\left(X\left\{(\mathcal{F}^{+}-\mathcal{F}^{-})I_n
	+ \frac{2}{n} \mathcal{F}^{-}J_n \right\} \varphi \right)_{j}
	\\
	&=x (\mathcal{F}^{+}-\mathcal{F}^{-})\varphi_j
	+ \frac{2}{n} x\mathcal{F}^{-} \left( \sum_{k=1}^{n} \varphi_k \right)
	\\
	&= \frac{i}{\sqrt{2\pi}} \varphi_j(0) +i \mathcal{F}^{+}(\partial_x \varphi_{j})
	\\
	&\quad - \left( \frac{-i}{\sqrt{2\pi}}  \varphi_{j}(0)
	- i \mathcal{F}^{-} (\partial_{x}\varphi_{j})\right)
	\\
	& \quad +\frac{2}{n} \left\{ \frac{-i}{\sqrt{2\pi}} \sum_{k=1}^{n} \varphi_k(0) - i \mathcal{F^{-}} \sum_{k=1}^{n} \partial_x \varphi_k \right\}
	\\
	&=i \left\{( \mathcal{F}^{+} +  \mathcal{F}^{-})\partial_x \varphi_{j} - \frac2{n} \mathcal{F}^{-} \sum_{k=1}^{n} \partial_x \varphi_k\right\}
	\\
	&\quad +\frac{2 i}{\sqrt{2\pi}} \varphi_j(0) -\frac{2}{n} \frac{i}{\sqrt{2\pi}} \sum_{k=1}^{n} \varphi_k(0).
\end{align*}
By the continuity at the vertex of $\varphi$, the summation of the last two terms is zero.  Thus, we obtain the desired equality. 
\end{proof}

Next, we will show Lemma \ref{lem4.4}. We do not require the continuity of $\varphi$ at the origin in Lemma \ref{lem4.4} and the following lemma. 

\begin{lemma}
\label{lemA.4}
For $\varphi \in H^{0,1}(\mathbb{R}_{+})$, we have
\begin{align*}
	\partial_{x} \mathcal{F}^{\pm}\varphi
	=\pm i \mathcal{F}^{\pm} (y\varphi)
\end{align*}
\end{lemma}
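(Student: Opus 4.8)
The plan is to prove the identity first for a dense subclass of functions, where one may legitimately differentiate under the integral sign, and then to remove the extra regularity by a density and continuity argument. Throughout I would treat $\mathcal{F}^{+}$ only; the case $\mathcal{F}^{-}$ follows verbatim after replacing $e^{ixy}$ by $e^{-ixy}$, which turns the factor $+i$ into $-i$, accounting for the sign $\pm$ in the statement. For $\varphi \in C_c^\infty((0,\infty))$, both $\varphi$ and $y\varphi$ are smooth and compactly supported, hence in $L^1(\mathbb{R}_{+})\cap L^2(\mathbb{R}_{+})$, so $[\mathcal{F}^{+}\varphi](x) = (2\pi)^{-1/2}\int_0^\infty e^{ixy}\varphi(y)\,dy$ is a smooth function of the dual variable $x$ and differentiation under the integral sign gives
\begin{align*}
	\partial_x[\mathcal{F}^{+}\varphi](x)
	= (2\pi)^{-1/2}\int_0^\infty (iy)\,e^{ixy}\varphi(y)\,dy
	= i\,[\mathcal{F}^{+}(y\varphi)](x).
\end{align*}
Note that, in contrast with Lemma \ref{lemA.3}, no boundary term appears here because we are differentiating the exponential in the dual variable rather than integrating by parts.

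Next I would pass to the limit. Given $\varphi \in H^{0,1}(\mathbb{R}_{+})$, i.e.\ $\varphi, X\varphi \in L^2(\mathbb{R}_{+})$, I would pick $\varphi_m \in C_c^\infty((0,\infty))$ with $\varphi_m \to \varphi$ in $H^{0,1}(\mathbb{R}_{+})$; such a sequence exists because $C_c^\infty((0,\infty))$ is dense in the weighted space $H^{0,1}(\mathbb{R}_{+})$ (truncate to an interval $[m^{-1},m]$ and mollify, using $X\varphi\in L^2$ to control the tail at infinity and $\varphi\in L^2$ near the origin). Then $\varphi_m \to \varphi$ and $y\varphi_m \to y\varphi$ in $L^2(\mathbb{R}_{+})$. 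Since $\mathcal{F}^{+}$ extends to a bounded (indeed isometric, by Plancherel) operator $L^2(\mathbb{R}_{+}) \to L^2(\mathbb{R})$, we get $\mathcal{F}^{+}\varphi_m \to \mathcal{F}^{+}\varphi$ and $\mathcal{F}^{+}(y\varphi_m) \to \mathcal{F}^{+}(y\varphi)$ in $L^2(\mathbb{R})$, hence also in the sense of distributions. Applying the identity of the previous step to each $\varphi_m$ and using that distributional differentiation is continuous for this convergence yields $\partial_x\mathcal{F}^{+}\varphi = i\,\mathcal{F}^{+}(y\varphi)$ in the sense of distributions, and in particular $\mathcal{F}^{+}\varphi \in H^{1}(\mathbb{R})$. (Alternatively, one may extend $\varphi$ by zero to $\mathbb{R}$ and simply invoke the standard fact that $g, xg \in L^2(\mathbb{R})$ implies $\widehat g \in H^1(\mathbb{R})$ together with the corresponding frequency-differentiation identity; the density argument above is essentially a self-contained proof of that fact.)

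The only genuine subtlety, and the reason the approximation step cannot be skipped, is that for a general $\varphi \in H^{0,1}(\mathbb{R}_{+})$ one has $y\varphi \in L^2(\mathbb{R}_{+})$ but not necessarily $y\varphi \in L^1(\mathbb{R}_{+})$; consequently $\mathcal{F}^{+}(y\varphi)$ must be understood via the $L^2$-extension of $\mathcal{F}^{+}$ and the asserted equality interpreted distributionally (equivalently, as an identity in $L^2(\mathbb{R})$). The density step is precisely what transports the pointwise identity from the $L^1\cap L^2$ setting of smooth compactly supported functions to this more general one; everything else is routine.
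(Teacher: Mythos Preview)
Your proof is correct and follows the same core idea as the paper: differentiate the exponential under the integral sign in the dual variable. The paper's own proof is a single line recording the formal computation $\partial_x\mathcal{F}^{\pm}\varphi=(2\pi)^{-1/2}\int_0^\infty(\pm iy)e^{\pm ixy}\varphi(y)\,dy=\pm i\,\mathcal{F}^{\pm}(y\varphi)$, without addressing the issue you raise (that $y\varphi$ need not lie in $L^1$ for general $\varphi\in H^{0,1}$); your density argument supplies exactly the justification the paper omits, so your version is the more rigorous of the two but not a different approach.
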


\begin{proof}
We have
\begin{align*}
	\partial_{x}\mathcal{F}^{\pm}\varphi
%	&=\frac{1}{\sqrt{2\pi}} \partial_{x} \int_{0}^{\infty}  e^{\pm ixy} \varphi(y) dy
%	\\
	=\frac{1}{\sqrt{2\pi}} \int_{0}^{\infty} (\pm i y) e^{\pm ixy} \varphi(y) dy
%	\\
%	&= \pm i  \frac{1}{\sqrt{2\pi}} \int_{0}^{\infty}  e^{\pm ixy} (y\varphi(y) )dy
%	\\
	= \pm i \mathcal{F}^{\pm} (y\varphi).
\end{align*}
\end{proof}

\begin{proof}[Proof of Lemma \ref{lem4.4}]
It follows from Lemma \ref{lemA.4} that
\begin{align*}
	\partial_x \mathcal{F} 
	&= \partial_x (\mathcal{F}^{-}-\mathcal{F}^{+})I_{n}
	+ \frac{2}{n} \partial_x \mathcal{F}^{+}J_{n}
	\\
	&=i\left\{ (-\mathcal{F}^{-}-\mathcal{F}^{+})X I_{n}
	+ \frac{2}{n}\mathcal{F}^{+} X J_{n}\right\}
	\\
	&=-i \mathcal{F}_{c} X.
\end{align*}
This completes the proof of Lemma \ref{lem4.4}. 
\end{proof}

It follows from \cite{Wed15} that the Fourier transform and co-Fourier transform are unitary on $L^2(\mathcal{G})$. 

\begin{proof}[Proof of Lemma \ref{lem4.1}]
The result for $\mathcal{F}$ in Lemma \ref{lem4.1} follows from \cite{Wed15}. 
We note that $\mathcal{F}_{c}$ is the Fourier transform with respect to the Laplacian $\Delta_M$ defined as follows:
\begin{align*}
	\mathscr{D}(\Delta_M) &:= \{f \in H^2(\cG)\ :\ Bf(0) + Af'(0+)=0\}, 
	\\
	\Delta_M f &:= (f_1'', f_2'', \cdots, f_n''),
\end{align*}
where $A$ and $B$ are in \eqref{eq2.7}. Thus, the general theory by \cite{Wed15} implies the desired statement. 
\end{proof}

The Hausdorff--Young inequality follows immediately as follows.
\begin{proof}[Proof of Lemma \ref{HY}]
As seen above, $\mathcal{F}$ is unitary in $L^2(\mathcal{G})$. Since
\begin{align*}
	\| \mathcal{F}^{\pm} f \|_{L^\infty(0,\infty)} \leq \| f \|_{L^1(0,\infty)},
\end{align*}
we have $\| \mathcal{F}f\|_{L^\infty(\mathcal{G})} \cleq \|f\|_{L^1(\mathcal{G})}$. Therefore, interpolation implies the desired estimate. Similar estimate for $\mathcal{F}_{c}$ also holds. 
\end{proof}

We show Lemma \ref{lem4.5}. 

\begin{proof}[Proof of Lemma \ref{lem4.5}]
By the H\"{o}lder continuity of $e^{ix}$, we have $|e^{\frac{i|x|^2}{4t}}-1| \cleq |t|^{-\alpha}|x|^{2\alpha}$ for $0\leq \alpha\leq 1/2$. 
Thus, it holds that
\begin{align*}
	\norm{(\mathcal{M}-1)f}_{L^p(\mathcal{G})} 
	&=\sum_{j=1}^{n} \norm{(M-1)f_j}_{L^p(0,\infty)}
	\\
	&\cleq  |t|^{-\alpha}\sum_{j=1}^{n} \norm{|x|^{2\alpha}f_j}_{L^p(0,\infty)}
	= |t|^{-\alpha} \norm{X^{2\alpha}f}_{L^p(\mathcal{G})}.
\end{align*}
This completes the proof. 
%where we define
%\begin{align*}
%	X:=\begin{pmatrix}
%	x & 0 & 0 \\
%	0 & x & 0 \\
%	0 & 0 & x 
%	\end{pmatrix}
%	.
%\end{align*}
\end{proof}

\begin{proof}[Proof of Lemma \ref{iv:0}]
The first inequality follows from 
\begin{align*}
%	\norm{J(t)(|u|^2u)}_{L^2} &= 
	\norm{XU(-t)|f|^2f}_{L^2} %&=\norm{U(t)XU(-t)|u|^2u}_{L^2} 	\\
%	&=\norm{J(t)(|u|^2u)}_{L^2} \\
	&=\norm{ X \mathcal{F}^{-1} \mathcal{D}^{-1} \mathcal{M}^{-1}|f|^2f}_{L^2} 	\\
%	&=\norm{\mathcal{F} X \mathcal{F}^{-1} \mathcal{D}^{-1} \mathcal{M}^{-1}|u|^2u}_{L^2} 	\\
	&=\norm{\mathcal{F}_{c}^{-1} \partial_{x} \left( \mathcal{D}^{-1} \mathcal{M}^{-1} |f|^2f \right)}_{L^2} 	\\
	&\leq C\norm{\mathcal{D}^{-1} t \partial_{x} \left(  |\mathcal{M}^{-1} f|^2 \mathcal{M}^{-1}f \right)}_{L^2} 	\\
%	&=\norm{t \mathcal{F} X \mathcal{F}^{-1}|\mathcal{M}^{-1}u|^2 \mathcal{M}^{-1}u}_{L^2} 	\\
%	&=\norm{t \mathcal{F} \mathcal{F}_{c}^{-1} \partial_x (|\mathcal{M}^{-1}u|^2 \mathcal{M}^{-1}u)}_{L^2}
%	\\
%	&=t\norm{ \partial_x (|\mathcal{M}^{-1}u|^2 \mathcal{M}^{-1}u)}_{L^2}
%	\\
	&\leq C\norm{ \mathcal{M}^{-1}f}_{L^\infty}^2 \norm{t\partial_x ( \mathcal{M}^{-1}f)}_{L^2}
	\\
	&\leq c_0 \norm{f}_{L^\infty}^2 \norm{XU(-t)f}_{L^2}.
\end{align*} 
The second inequality holds by the Leibniz rule. 
\end{proof}

At last, we give the proof of the Sobolev inequality. 

\begin{proof}[Proof of Proposition \ref{sob:1}]
Let $f$ be a function on the star graph.
Obviously we have the following estimate. For $x \geq 0$, 
\begin{align*}
	|f_j(x)|^2
	&= -\int_{x}^{\infty} (|f_j(y)|^2)' dy
%	\\
%	&=- 2 \int_{x}^{\infty} \re(f_j(y) f_j'(y)) dy
	\\
	&\leq  2 \int_{0}^{\infty} |f_j(y)|| f_j'(y)| dy
	\\
	&\leq 2 \| f_j \|_{L^2(0,\infty)}  \| f_j' \|_{L^2(0,\infty)} 
	\\
	&\leq 2 \| f \|_{H^1(\mathcal{G})}^2.
\end{align*}
Thus, we get $\| f_j \|_{L^\infty(0,\infty)}\leq 2 \| f \|_{H^1(\mathcal{G})}^2$.
Therefore, 
\begin{align*}
	\| f \|_{L^{\infty} (\mathcal{G})} = \max_{j=1,\cdots,n} \| f_j \|_{L^\infty(0,\infty)} \leq \sqrt{2} \| f \|_{H^1(\mathcal{G})}.
\end{align*}
For $2 \leq p < \infty$, we obtain
\begin{align*}
	\| f \|_{L^{p} (\mathcal{G})}^p
%	&\leq  \sum_{j=1}^{n} \| f_j \|_{L^{p} (0,\infty)}^p
	&\leq  \sum_{j=1}^{n} \| f_j \|_{L^{2}  (0,\infty)}^{2} \| f_j \|_{L^{\infty}  (0,\infty)}^{p-2}
	\\
%	\leq 2^{\frac{p-2}{2}} \| f \|_{H^1(\mathcal{G})}^{p-2} \sum_{j=1}^{n} \| f_j \|_{L^{2} (\mathcal{G})}^{2}
	&\leq 2^{\frac{p-2}{2}} \| f \|_{H^1(\mathcal{G})}^{p-2} \| f \|_{L^{2} (\mathcal{G})}^2
	\leq 2^{\frac{p-2}{2}} \| f \|_{H^1(\mathcal{G})}^{p}
\end{align*}
Thus, we obtain the desired estimate. 
%\begin{align*}
%	\| f \|_{L^{p} (\mathcal{G})} \leq 2^{\frac{p-2}{2p}} \| f \|_{H^1(\mathcal{G})}.
%\end{align*}
\end{proof}

% \bib, bibdiv, biblist are defined by the amsrefs package.
\begin{bibdiv}
\begin{biblist}

\bib{AdCaFiNo11}{article}{
   author={Adami, Riccardo},
   author={Cacciapuoti, Claudio},
   author={Finco, Domenico},
   author={Noja, Diego},
   title={Fast solitons on star graphs},
   journal={Rev. Math. Phys.},
   volume={23},
   date={2011},
   number={4},
   pages={409--451},
   issn={0129-055X},
   review={\MR{2804557}},
   doi={10.1142/S0129055X11004345},
}

\bib{AKW11}{article}{
    AUTHOR = {Aktosun, Tuncay},
    author={Klaus, Martin},
    author={Weder, Ricardo},
     TITLE = {Small-energy analysis for the self-adjoint matrix
              {S}chr\"{o}dinger operator on the half line},
   JOURNAL = {J. Math. Phys.},
  FJOURNAL = {Journal of Mathematical Physics},
    VOLUME = {52},
      YEAR = {2011},
    NUMBER = {10},
     PAGES = {102101, 24},
      ISSN = {0022-2488},
   MRCLASS = {81U05 (81Q35 81U20 81U40)},
  MRNUMBER = {2894582},
MRREVIEWER = {Sujin Suwanna},
       DOI = {10.1063/1.3640029},
       URL = {https://doi.org/10.1063/1.3640029},
}

\bib{AIM20}{article}{
   author={Kazuki Aoki},
   author={Takahisa Inui},
   author={Haruya Mizutani}
   title={},
   journal={J. Evol. Equ.},
  volume={},
   date={2020},
   number={},
   pages={},
   issn={1424-3202},
   review={},
   doi={10.1007/s00028-020-00579-w},
}

\bib{Ba}{article}{
   author={Barab, Jacqueline E.},
   title={Nonexistence of asymptotically free solutions for a nonlinear
   Schr\"{o}dinger equation},
   journal={J. Math. Phys.},
   volume={25},
   date={1984},
   number={11},
   pages={3270--3273},
   issn={0022-2488},
   review={\MR{761850}},
   doi={10.1063/1.526074},
}

\bib{Caz}{book}{
    AUTHOR = {Cazenave, Thierry},
     TITLE = {Semilinear {S}chr\"{o}dinger equations},
    SERIES = {Courant Lecture Notes in Mathematics},
    VOLUME = {10},
 PUBLISHER = {New York University, Courant Institute of Mathematical
              Sciences, New York; American Mathematical Society, Providence,
              RI},
      YEAR = {2003},
     PAGES = {xiv+323},
      ISBN = {0-8218-3399-5},
   MRCLASS = {35Q55 (35-01 35J10 35Q40)},
  MRNUMBER = {2002047},
MRREVIEWER = {Woodford W. Zachary},
       DOI = {10.1090/cln/010},
       URL = {https://doi.org/10.1090/cln/010},
        review={\MR{2002047}},
}

\bib{GO}{article}{
      author={Ginibre, J.},
      author={Ozawa, T.},
       title={Long range scattering for nonlinear {S}chr\"odinger and {H}artree
  equations in space dimension {$n\geq 2$}},
        date={1993},
        ISSN={0010-3616},
     journal={Comm. Math. Phys.},
      volume={151},
      number={3},
       pages={619\ndash 645},
         url={http://projecteuclid.org/euclid.cmp/1104252243},
      review={\MR{1207269}},
}

\bib{GrIg}{article}{
   author={Grecu, Andreea},
   author={Ignat, Liviu I.},
   title={The Schr\"{o}dinger equation on a star-shaped graph under general
   coupling conditions},
   journal={J. Phys. A},
   volume={52},
   date={2019},
   number={3},
   pages={035202, 26},
   issn={1751-8113},
   review={\MR{3899706}},
}

\bib{HaNa98}{article}{
   author={Hayashi, Nakao},
   author={Naumkin, Pavel I.},
   title={Asymptotics for large time of solutions to the nonlinear
   Schr\"{o}dinger and Hartree equations},
   journal={Amer. J. Math.},
   volume={120},
   date={1998},
   number={2},
   pages={369--389},
   issn={0002-9327},
   review={\MR{1613646}},
}

\bib{IfTa}{article}{
   author={Ifrim, Mihaela},
   author={Tataru, Daniel},
   title={Global bounds for the cubic nonlinear Schr\"{o}dinger equation (NLS)
   in one space dimension},
   journal={Nonlinearity},
   volume={28},
   date={2015},
   number={8},
   pages={2661--2675},
   issn={0951-7715},
   review={\MR{3382579}},
   doi={10.1088/0951-7715/28/8/2661},
}

\bib{KaJ}{article}{
   author={Kato, Jun},
   author={Pusateri, Fabio},
   title={A new proof of long-range scattering for critical nonlinear
   Schr\"{o}dinger equations},
   journal={Differential Integral Equations},
   volume={24},
   date={2011},
   number={9-10},
   pages={923--940},
   issn={0893-4983},
   review={\MR{2850346}},
}

\bib{KoSc}{article}{
   author={Kostrykin, Vadim},
   author={Schrader, Robert},
   title={Laplacians on metric graphs: eigenvalues, resolvents and
   semigroups},
   conference={
      title={Quantum graphs and their applications},
   },
   book={
      series={Contemp. Math.},
      volume={415},
      publisher={Amer. Math. Soc., Providence, RI},
   },
   date={2006},
   pages={201--225},
   review={\MR{2277618}},
   doi={10.1090/conm/415/07870},
}

\bib{LiSo}{article}{
   author={Lindblad, Hans},
   author={Soffer, Avy},
   title={Scattering and small data completeness for the critical nonlinear
   Schr\"{o}dinger equation},
   journal={Nonlinearity},
   volume={19},
   date={2006},
   number={2},
   pages={345--353},
   issn={0951-7715},
   review={\MR{2199392}},
   doi={10.1088/0951-7715/19/2/006},
}

\bib{MuNa}{article}{
        author={Nakanishi, Kenji}, 
        author={Murphy, Jason},
        title={Failure of scattering to solitary waves for long-range nonlinear Schr\"odinger equations},
        date={2019},
        journal={preprint},
        eprint={arXiv:1906.01802},
}

\bib{Oz}{article}{
   author={Ozawa, Tohru},
   title={Long range scattering for nonlinear Schr\"{o}dinger equations in one
   space dimension},
   journal={Comm. Math. Phys.},
   volume={139},
   date={1991},
   number={3},
   pages={479--493},
   issn={0010-3616},
   review={\MR{1121130}},
}

\bib{Str74}{article}{
      author={Strauss, Walter A.},
       title={Nonlinear Scattering theory},
        date={1974},
%        ISSN={0012-7094},
     journal={Scattering theory in mathematical physics},
      volume={},
      number={},
       pages={53\ndash 78},
%         url={http://dx.doi.org/10.1215/S0012-7094-07-13825-0},
%      review={\MR{2318286}},
}

\bib{Wed15}{article}{
AUTHOR = {Weder, Ricardo},
     TITLE = {Scattering theory for the matrix {S}chr\"{o}dinger operator on the
              half line with general boundary conditions},
   JOURNAL = {J. Math. Phys.},
  FJOURNAL = {Journal of Mathematical Physics},
    VOLUME = {56},
      YEAR = {2015},
    NUMBER = {9},
     PAGES = {092103, 24},
      ISSN = {0022-2488},
   MRCLASS = {35J10 (35P25 47F05)},
  MRNUMBER = {3395871},
MRREVIEWER = {Hitoshi Kitada},
       DOI = {10.1063/1.4930293},
       URL = {https://doi.org/10.1063/1.4930293}
      review={\MR{3395871}},
}

\bib{TsYa}{article}{
   author={Tsutsumi, Yoshio},
   author={Yajima, Kenji},
   title={The asymptotic behavior of nonlinear Schr\"{o}dinger equations},
   journal={Bull. Amer. Math. Soc. (N.S.)},
   volume={11},
   date={1984},
   number={1},
   pages={186--188},
   issn={0273-0979},
   review={\MR{741737}},
   doi={10.1090/S0273-0979-1984-15263-7},
}

\bib{Yos18}{article}{
	author={Yoshinaga, Kouki}
	date={2018},
       journal={Master Thesis, Graduate School of Information Science and Technology, Osaka University, written in Japanese},
}

\end{biblist}
\end{bibdiv}

\end{document}